\newif\ifdraft
\newtheorem{thm}{Theorem}[section]
\newtheorem{cor}[thm]{Corollary}
\newtheorem{lem}[thm]{Lemma}
\newtheorem{prop}[thm]{Proposition}
\theoremstyle{definition}
\theoremstyle{remark}
\newtheorem{rem}{Remark}[section]
\numberwithin{equation}{section}
\newcommand{\R}{\mathbb{R}}
\newcommand{\nwc}{\newcommand}
\nwc{\sgn}{\mathop{\rm sgn}\nolimits}
\nwc{\supp}{\mathop{\rm supp}\nolimits}
\nwc{\qref}[1]{(\ref{#1})}
\nwc{\ip}[1]{\langle{#1}\rangle}
\nwc{\D}{\partial}
\nwc{\Dt}{\partial_t}
\nwc{\Dx}{\partial_x}
\nwc{\Dxx}{\partial_x^2}
\nwc{\intr}{\int_0^\infty}
\nwc{\inti}{\int_0^1}
\nwc{\inv}{^{-1}}
\nwc{\one}{\mathbbm{1}}
\nwc{\ibvp}{ \qref{eq:0}, \qref{eq:ibc}}
\nwc{\init}{^{\rm in}}
\nwc{\loc}{_{\rm loc}}
\nwc{\fin}{f\init}
\nwc{\bfin}{\bar f\init}
\nwc{\nin}{n\init}
\nwc{\mnin}{m\init}
\nwc{\bnin}{{\bar n}\init}
\nwc{\fep}{f_{\epsilon}}
\nwc{\nep}{n_{\epsilon}}
\nwc{\mep}{m_{\epsilon}}
\nwc{\nmu}{n_\mu}
\nwc{\eps}{\epsilon}
\nwc{\epb}{{\bar\epsilon}}
\nwc{\ap}{\alpha}
\nwc{\Jdif}{{\tilde J}}
\nwc{\QET}{Q_{\eps,T}}
\nwc{\Qplus}{Q} %_{(0,\infty)}}
\nwc{\Bplus}{B_+}
\nwc{\dX}{} %{{\,dX}}
\nwc{\llocq}{L^2\loc(Q)}
\nwc{\neph}{n_h}
\nwc{\holder}{H\"older}
\nwc{\hide}[1]{}  %% HIDE: uncomment to hide stuff
\nwc{\myline}{\bigskip  \hrule  \bigskip}
\def\Xint#1{\mathchoice
  {\XXint\displaystyle\textstyle{#1}}%
  {\XXint\textstyle\scriptstyle{#1}}%
  {\XXint\scriptstyle\scriptscriptstyle{#1}}%
  {\XXint\scriptscriptstyle\scriptscriptstyle{#1}}%
\!\int}
\def\XXint#1#2#3{ \setbox0=\hbox{$#1{#2#3}{\int}$ }
\vcenter{\hbox{$#2#3$ }}\kern-.6\wd0}
\def\avint{\Xint-} 
\def\serieslogo@{}
\def\@setcopyright{}
\begin{document}

%\title[Kompaneets: DRAFT \mmddyyyydate\today\quad \currenttime]
\title[Global Dynamics of Bose-Einstein Condensation]
      {Global Dynamics of Bose-Einstein Condensation \\
       for a Model of the Kompaneets Equation}

\author{C. David Levermore}
\address{
    Department of Mathematics,
    University of Maryland,
    College Park, MD 20742}
    \email{lvrmr@math.umd.edu}

\author{Hailiang Liu}
\address{
    Department of Mathematics,
    Iowa State University,
    Ames, IA 50011}
    \email{hliu@iastate.edu}

\author{Robert L. Pego}
\address{
    Department of Mathematical Sciences,
    Carnegie Mellon University,
    Pittsburgh, PA 15213.}
    \email{rpego@cmu.edu}

\date{\today}

\subjclass[2000]{Primary 35K55, 35B40, 35Q85}
\keywords{Kompaneets equation, Bose-Einstein condensate, quantum entropy,
LaSalle invariance principle}

%The current file is: \currfilename. \\

%%%%%%%%%%%%%%%%%%%%%%%%%%%%%%%%%%%%%%%%%%%%%%%%%%%%%%%%%%%%%%%%%%%%%%

\begin{abstract} 

   The Kompaneets equation describes a field of photons exchanging 
energy by Compton scattering with the free electrons of a homogeneous, 
isotropic, non-relativistic, thermal plasma.  This paper strives to 
advance our understanding of how this equation captures the phenomenon 
of Bose-Einstein condensation through the study of a model equation.  
For this model we prove existence and uniqueness theorems for global 
weak solutions.  In some cases a Bose-Einstein condensate 
will form in finite time, and we show that it will 
continue to gain photons forever afterwards.  Moreover we show that every 
solution approaches a stationary solution for large time.  Key tools 
include a universal super solution, a one-sided Oleinik type 
inequality, and an $L^1$ contraction.

\end{abstract}

\maketitle

%\thispagestyle{fancy}

%\pagebreak

%\tableofcontents

%\pagebreak

%%%%%%%%%%%%%%%%%%%%%%%%%%%%%%%%%%%%%%%%%%%%%%%%%%%%%%%%%%%%%%%%%%%%%%

\section{Introduction}

   Photons can play a major role in the transport of energy in a fully 
ionized plasma through the processes of emission, absorption, and 
scattering.  At high temperatures or low densities, the dominant 
process can be Compton scattering off free electrons.  We make the 
simplification that the plasma is spatially uniform, isotropic, 
nonrelativistic, and thermal at temperature $T$.  We also neglect the 
heat capacity of the photons and assume that $T$ is fixed.  If the 
photon field is also spatially uniform and isotropic then it can be 
described by a nonnegative number density $f(x,t)$ over the unitless 
photon energy variable $x\in(0,\infty)$ given by
$$
  x = \frac{\hbar |k| c}{k_B T},
$$
where $\hbar$ is Planck's constant, $c$ is the speed of light, $k_B$ 
is Boltzmann's constant, and $k$ is the photon wave vector.  Because 
$x$ is a unitless radial variable, the total photon number and 
(unitless) total photon energy associated with $f(x,t)$ are then 
given by
$$ 
  N[f] = \int_0^\infty f \, x^2 \, dx \,, \qquad
  E[f] = \int_0^\infty f \, x^3 \, dx \,.
$$
When the only energy exchange mechanism is Compton scattering of the 
photons by the free electrons in the plasma then the evolution of $f$ 
is governed by the {\sl Kompaneets equation} \cite{K57} 
\begin{equation}
  \label{eq:ko}
  \partial_t f 
  = \frac{1}{x^2} \, 
    \partial_x \left[ x^4 \big( \partial_x f + f + f^2 \big) \right] \,.
\end{equation}
This Fokker-Planck approximation to a quantum Boltzmann equation is 
justified physically by arguing that little energy is exchanged by 
each photon-electron collision.  

   Because $x$ is a radial variable, the associated divergence 
operator has the form $x^{-2}\partial_x x^2$.  Thereby we see from 
\qref{eq:ko} that the diffusion coefficient in the Kompaneets equation 
is $x^2$, which vanishes at $x=0$.  This singular behavior allows the 
$f^2$ convection term to drive the creation of a photon concentration 
at $x=0$.  This hyperbolic mechanism models the phenomenon of 
Bose-Einstein condensation.  Our goal is to better understand how the 
Kompaneets equation generally describes the process of relaxation to 
equilibrium over large time and how it captures the phenomenon of 
Bose-Einstein condensation in particular.

   Rather than addressing these questions for the Kompaneets equation
\qref{eq:ko} we will consider the model Fokker-Planck equation
\begin{equation}
  \label{eq:0}
  \partial_t f 
  = \frac{1}{x^2} \,
    \partial_x \left[ x^4 \big( \partial_x f + f^2 \big) \right] \,,
\end{equation}
posed over $x\in(0,1)$ and subject to a zero flux boundary condition 
at $x=1$.  This model is obtained by simply dropping the $f$ term that 
appears in the flux of the Kompaneets equation \qref{eq:ko} and 
reducing the $x$-domain to $(0,1)$.  As we will see, this model shares 
many structural features with the Kompaneets equation.  In particular, 
it shares the $x^2$ diffusion coefficient and the $f^2$ convection 
term that allow the onset of Bose-Einstein condensation.  The neglect 
of the $f$ term in the flux of the Kompaneets equation is a reasonable 
approximation during the onset of Bose-Einstein condensation when we 
expect $f$ to be large.  The advantage of model \qref{eq:0} is that 
we know some estimates for it that have no known analogs for 
\qref{eq:ko}, and which facilitate the study of 
condensate dynamics and equilibration. 
A disadvantage of \eqref{eq:0} is that its equilibrium solutions
differ from those of \eqref{eq:ko}, so we may expect
the long-time behavior of its solutions to be 
similar to that of solutions of \eqref{eq:ko}
only in a qualitative sense.

%%%%%%%%%%%%%%%%%%%%%%%%%%%%%%%%%%%%%%%%%%%%%%%%%%%%%%%%%%%%%%%%%%%%%%

\subsection{Structure of the Kompaneets Equation}

  Here we describe some structural features of the Kompaneets equation
that will be shared by our model.  First, solutions of \qref{eq:ko} 
formally conserve total photon number $N[f]$.  Indeed, we formally 
compute that
\[
  \frac{d}{dt} N[f] 
  = x^4 \big( \Dx f + f + f^2 \big) \Big|_0^\infty = 0 \,,
\]
under the expectation that the flux vanishes as $x$ approaches $0$ and 
$\infty$.  Second, solutions of \qref{eq:ko} formally dissipate 
quantum entropy $H[f]$ given by
\[
  H[f] = \int_0^\infty \! h(f,x) \, x^2 dx \,, \qquad 
  %\text{where} \quad
  h(f,x) = f \log(f) - (1 + f) \log(1 + f) + x f \,.
\]
Indeed, because
\begin{align*}
  h_f(f,x) & = \log(f) - \log(1 + f) + x
             = \log\left( \frac{e^x f}{1 + f} \right) \,,
\\
  \partial_x h_f & = h_{ff} \partial_x f + 1 
                   = \frac{1}{f (1 + f)} (\partial_x f + f + f^2) \,,
\end{align*}
we formally compute that
\[
  \frac{d}{dt} H[f] = \intr  h_f(f,x) \, (\Dt f) \, x^2 dx
  = - \intr x^4 f (1 + f) \big( \Dx h_f(f,x) \big)^2 dx \leq 0 \,.
\]
By this ``$H$ theorem,'' we expect solutions to approach an equilibrium  
for which $\Dx h_f(f,x)=0$.  These equilibria have the Bose-Einstein 
form
$$
  f = f_\mu(x) = \frac{1}{e^{x + \mu} - 1} \,, \qquad 
  \text{for some $\mu \geq 0$} \,.
$$

   At this point a paradox arises.  The total photon number for the 
equilibrium $f_\mu$ is
\[
  N[f_\mu] = \int_0^\infty \frac{x^2}{e^{x + \mu} - 1} \, dx \,.
\]
This is a decreasing function of $\mu$ over $[0,\infty)$, and is 
thereby bounded above by $N[f_0]$, which is finite.  Because total 
number is supposed to be conserved, we expect any solution to relax to an equilibrium 
$f_\mu$ with the same total number as the initial data, satisfying 
$N[f_\mu] = N[\fin]$.  But if the initial number $N[\fin]>N[f_0]$ then 
no such equilibrium exists!

%%%%%%%%%%%%%%%%%%%%%%%%%%%%%%%%%%%%%%%%%%%%%%%%%%%%%%%%%%%%%%%%%%%%%%

\subsection{Bose-Einstein Condensation}

   The foregoing paradox indicates that there must be a breakdown in 
the expectations given above.  Previous studies (\cite{EHV98} in particular)
have shown that a breakdown in the no-flux condition at $x=0$ can occur.  
A physical interpretation of a nonzero photon flux at $x=0$ is that the photon 
distribution forms a concentration of photons at zero energy (i.e.,
energy that is negligible on the scales described by the model).  
This Bose-Einstein condensate accounts for some of the total photon number.  
See especially the works \cite{LZ69,CL86,EHV98,JPR06}, and the discussion of related literature
in subsection~\ref{s:discuss} below.  As massless, 
chargeless particles of integer spin, photons are the simplest bosons.  
Indeed, S.~N.~Bose had photons in mind in 1924 when he proposed his new 
way of counting indistinguishable particles, work soon followed by 
Einstein's prediction of the existence of the condensate.  Yet it was 
not until 2010 that the first observation of a photon condensate was 
reported by Martin Weitz and colleagues \cite{KSVW10}.

   In the present context, we can gain insight into this phenomenon by 
dropping the diffusion term in \qref{eq:ko}, as discussed by Levich 
and Zel'dovich \cite{LZ69}.  In this case the Kompaneets equation 
simplifies to the first-order hyperbolic equation
$$
  \partial_t f 
  = \frac{1}{x^2} \,
    \partial_x \left[ x^4 \big( f + f^2 \big) \right] \,.
$$
Letting $n = x^2f$, this becomes 
\begin{equation}
  \label{eq:koon}
  \partial_t n = \partial_x \left[ x^2 n + n^2 \right] \,,
\end{equation}
whose characteristic equations are
$$
  \dot x = - x^2 - 2 n \,, \qquad  \dot n = 2 x n \,.
$$
Because $n \geq 0$, the origin $x=0$ is an outflow boundary,
and no boundary condition can be specified there.  Clearly any 
nonzero entropy solution will develop a nonzero flux of photons 
into the origin in finite time, leading to the formation of a 
condensate.

   The fact the $f^2$ convection term plays an essential role in the 
formation of Bose-Einstein condensates is illustrated by considering 
what happens when that term is dropped from the Kompaneets equation 
\qref{eq:ko}.  This leads to the linear degenerate parabolic equation
\begin{equation}
  \label{eq:klin}
  \partial_t f 
  = \frac{1}{x^2} \, 
    \partial_x \left[ x^4 \big( \partial_x f + f \big) \right] \,.
\end{equation}
This equation is the analog of the Kompaneets equation for classical 
statistics.  Its solutions formally conserve $N[f]$ and dissipate the 
associated entropy
$$
  H[f] = \int_0^\infty h(f,x) \, x^2 dx \,, \qquad \text{where} \quad
  h(f,x) = f \log(f) - f + x f \,.
$$
Its family of equilibria is
$$
  f_\mu(x) = e^{- x - \mu} \,,  \qquad \text{for some $\mu \in \R$} \,.
$$
The initial-value problem for \qref{eq:klin} is well-posed in cones of 
nonnegative densities $f$ such that 
$$
  \int_0^\infty (e^x f)^p e^{-x} x^2 dx < \infty \,, \qquad   
  \text{for some $p\in(1,\infty)$} \,.
$$
These solutions \cite{KL90}

\begin{itemize}

\item are smooth over $\R^+\times \R^+$,

\item are positive over $\R^+\times \R^+$ provided that $f\init$ is 
      nonzero,

\item satisfy all the expected boundary conditions,

\item conserve $N[f]$ and dissipate $H[f]$ as expected,

\item approach $f_\mu$ as $t\to\infty$, where $N[f\init] = N[f_\mu]$.

\end{itemize}

\noindent
In particular, the no-flux boundary condition is satisfied at $x=0$ 
without being imposed!  Therefore, no Bose-Einstein concentration 
happens!  

%%%%%%%%%%%%%%%%%%%%%%%%%%%%%%%%%%%%%%%%%%%%%%%%%%%%%%%%%%%%%%%%%%%%%%

\subsection{Present Investigation}

   Of course, solutions of the hyperbolic model \qref{eq:koon} may 
develop shocks at any location.  However, the diffusion term in the
Kompaneets equation \qref{eq:ko} prevents shock formation for $x>0$.
The results of Escobedo et al.~\cite{EHV98} prove that the degeneracy 
of its diffusion does {\sl not} prevent shock formation at $x=0$.  
These authors proved that there exist solutions of \qref{eq:ko} that 
are regular and satisfy no-flux conditions for $t$ on a bounded 
interval $0<t<T_c$ (which is solution-dependent), but at time $t=T_c$ 
the flux at $x=0$ becomes nonzero.  Such solutions exist for 
arbitrarily small initial photon number.  Moreover, global existence 
and uniqueness of solutions of \qref{eq:ko} was proved subject to a 
boundedness condition for $x^2 f$ for $x\in[0,1]$.  

   A number of interesting questions about solutions to the Kompaneets 
equation remain unanswered by previous studies: What happens to a 
condensate once it forms?  Can it lose photons as well as gain them?
Are there any boundary conditions at all that we can impose near $x=0$
that yield different condensate dynamics, allowing the condensate to 
interact with other photons?  Can we identify the long-time limit of 
any initial density of photons?

   In order to focus clearly on these questions, we have found it 
convenient to drop the linear term $x^2 f$ from the Kompaneets flux 
and consider the model equation \qref{eq:0}, which retains the 
essential features of nonlinearity and degenerate diffusion.  The 
equilibria of equation \qref{eq:0} are
\begin{equation}
  \label{eq:s0}
  f_\mu(x) = \frac{1}{x + \mu} \,, \qquad 
  \text{for some $\mu\geq0$} \,.
\end{equation}
We include these solutions in the class of functions considered by
restricting our attention to the interval $0<x<1$ and imposing a 
no-flux boundary condition at $x=1$. For these equilibria the
maximal total photon number is $N[f_0]=\frac12$.

   For this model problem, we shall assemble a fairly detailed 
description of well-posedness and long-time dynamics.  
We establish existence and 
uniqueness in a natural class of nonnegative weak solutions 
for initial data that simply has \textit{some} finite moment
\[
   \intr x^p \fin \, dx \,, \quad p\ge 2\,.
\]
These results are proved with 
essential use of estimates for hyperbolic (first-order)
equations, and establish that while the model Kompaneets
equation \eqref{eq:klin}  is parabolic for $x>0$,
the point $x=0$ remains an \textit{outflow} boundary at which no
boundary condition can be specified. 

The solution map is {\em nonexpansive} in $L^1$-norm with weight 
$x^2$.  Therefore the total photon number $N[f(t)]$ is nonincreasing 
in time.  A condensate can gain photons but never lose them, and
must form in finite time whenever $N[\fin]>N[f_0]$. 
Moreover, once it starts growing it never stops. 
Every solution relaxes to some equilibrium state $f_\mu$ 
in the long-time limit $t\to\infty$.  We cannot identify the limiting state in general, 
but the solution must approach the maximal steady state $f_0$
if the initial data $f_{\rm in}\ge f_0$ everywhere.

   The proofs of the results on long-time behavior are greatly 
facilitated by two features of the model problem \qref{eq:0}.
First, the problem admits a {\em universal super-solution}
$f_{\rm super}$ determined by
\begin{equation}
  x^2 f_{\rm super}(x,t) 
  = x + \frac{1 - x}{t} + \frac{2}{\sqrt{t}} \,.
\end{equation}
By consequence, for every solution, $x^2 f$ is in fact bounded in $x$ for each $t>0$, 
and moreover one has $\limsup_{t\to\infty}x^2f(x,t)\leq x=x^2f_0(x)$
for every solution, for example. Also, every solution satisfies
\[
  \Dx(x^2 f) \geq - \frac4t \,,
\]
which is Oleinik's inequality for admissible solutions of the 
conservation law \qref{eq:koon} after dropping the linear flux term 
$x^2n$.

\subsection{Literature on Related Problems}\label{s:discuss}
As indicated above, the Kompaneets equation is derived 
from a Boltzmann-Compton kinetic equation for photons interacting with a 
gas of electrons in thermal equilibrium---see \cite{K57}, and especially 
\cite{EMValle03} for a derivation and links to some of the physical literature.
Regarding the analysis of the Boltzmann-Compton equation itself, 
when a simplified regular and bounded kernel is adopted, 
Escobedo and Mischler \cite{EM01} studied 
the asymptotic behavior of the solutions, and showed that the photon 
distribution function may form a condensate at zero energy asymptotically in 
infinite time. 
Further, Escobedo et al. \cite{EMV04}  showed that the asymptotic behavior of solutions 
is sensitive not only to the total mass of the initial data but also to its 
precise behavior near the origin.  In some cases, solutions develop a Dirac 
mass at the origin for long times (in the limit $t\to\infty$)
in a self-similar manner.  For the Boltzmann-Compton
equation with  a physical kernel, some results concerning both global existence
and non-existence, depending on the size of initial data,
were obtained by Ferrari and Nouri \cite{FN06}.

%[Boltzmann-Nordheim]
A natural question is whether results analogous to those obtained in the present paper concerning the development 
of condensates may hold for other kinetic equations that govern boson gases, 
such as Boltzmann-Nordheim (aka Uehling-Uhlenbeck) quantum kinetic equations. 
Concerning these issues we refer to the work of 
H.~Spohn \cite{Sp10}, Xuguang Lu \cite{Lu11}, the recent analysis  
of blowup and condensation formation by 
Escobedo and Velazquez \cite{EV15}, and references cited therein.
Higher-order Fokker-Planck-type approximations to the 
Boltzmann-Nordheim equation were derived formally by Josserand et al \cite{JPR06}, 
and an analysis of the behavior of solutions has been performed recently by 
J\"ungel and Winkler \cite{JW15a,JW15b}.

%[Kaniadakis-Quarati FP model of classical particles satisfying an exclusion principle]: 
Bose-Einstein equilibria and condensation phenomena also 
appear in classical Fokker-Planck models that incorporate 
a quantum-type exclusion principle \cite{KQ93,KQ94}. 
Concerning mathematical results on blowup 
and condensates for these models, we refer to work of Toscani~\cite{To12}
and Carrillo et al.~\cite{Car13} and references therein.

\subsection{Plan of the Paper}
In Section~\ref{s:results} we introduce our notion of weak solutions for \qref{eq:0} 
together with relevant notations, followed by precise statements of the 
main results, and a discussion of related literature.  
In Section~\ref{s:unique} we prove the uniqueness of weak solutions
for initial data with some finite moment. 
Existence is proved in Section~\ref{s:exist} by passing to the 
limit in a problem regularized by truncating the domain away from $x=0$.  

 In Section~\ref{s:bec} we establish that condensation 
must occur if the initial photon number $N[f_{\rm in}]>N[f_0]$, and we 
show that once a shock forms at $x=0$ in finite time, it will persist 
and continue growing for all later time.  
Large-time convergence to equilibrium is proved for 
every solution in Section~\ref{s:largetime}, using arguments related to
LaSalle's invariance principle. 

The paper concludes with three appendices that deal with 
several technical but less central issues. 
A simple, self-contained treatment of some anisotropic Sobolev embedding
estimates used in our analysis is contained in Appendix A.
The truncated problem used in Section~\ref{s:exist} requires 
a special treatment due to the fact that the zero-flux boundary condition 
at $x=1$ is nonlinear --- this treatment is carried out in Appendix B.  
A proof of interior regularity of the solution,
sufficient to provide a classical solution away from $x=0$ 
but up to the boundary $x=1$, is established in Appendix C.

{\ifdraft
\vfil\pagebreak 
\fi}

\section{Main results} 
\label{s:results}

%%%%%%%%%%%%%%%%%%%%%%%%%%%%%%%%%%%%%%%%%%%%%%%%%%%%%%%%%%%%%%%%%%%%%%

\subsection{Model Initial-Value Problem}

   In light of the foregoing discussion, it is convenient to work 
with the densities
\begin{equation}
  \label{d:n}
  n = x^2 f \,, \qquad
  n\init = x^2 f\init \,.
\end{equation}
The flux in our model equation \qref{eq:0} can be expressed as
\begin{equation}
  \label{d:J}
  J = x^2 \partial_x n + n^2 - 2 x n \,.
\end{equation}
The initial-value problem for our model equation \qref{eq:0} that
we will consider is 
\begin{subequations}
  \label{ne-}
\begin{align}
  \partial_t n - \partial_x J & = 0 \,, && 0 < x < 1 \,, \ t > 0 \,,
\\
  J(1,t) & = 0 \,, && t > 0 \,, 
\\
  n(x,0) & = n\init(x), && 0 < x < 1 \,.
\end{align}
\end{subequations}
Here we have imposed the no-flux boundary condition at $x=1$, but do
not impose any boundary condition at $x=0$, where the diffusion 
coefficient $x^2$ vanishes.  

   We work with a weak formulation of the initial-value problem
\qref{ne-}.  We require the initial data $n\init$ to satisfy
\begin{equation}
  \label{init-data}
  n\init \geq 0 \,, \qquad  
  x^p n\init \in L^1((0,1]) \quad \text{for some $p\geq0$} \,.
\end{equation}
Let $Q=(0,1]\times(0,\infty)$.
We say $n$ is a {\em weak solution} of the initial-value problem 
\qref{ne-} if
\begin{subequations}
  \label{weak-soln} 
\begin{gather}
  \label{b}
  n \geq 0 \,, \qquad
  n \,, \  \D_x n \in L^2\loc(Q) \,, \quad 
\\
  \label{c:icp}
  x^p n \in L^1((0,1]\times(0,T)) \quad 
  \text{for every $T>0$} \,, 
\\
  \label{con-}
  n(\cdot,t) \to n\init \quad  
  \text{in $L^1\loc((0,1])$ as $t\to0^+$} \,,
\\
  \label{we-}
  \int_Q \bigl( n \,\Dt \psi - J \, \Dx\psi \bigr) \, dX = 0 
  \quad (dX = dx \, dt) \,,
\end{gather}
\end{subequations}
for every $C^1$ test function $\psi$ with compact support in $Q$. 
Condition \qref{b} is needed to make sense of the weak formulation
\qref{we-}.  Condition \qref{c:icp} is an admissibility condition we 
need to establish uniqueness.  Condition \qref{con-} gives the sense 
in which the initial data is recovered.

%%%%%%%%%%%%%%%%%%%%%%%%%%%%%%%%%%%%%%%%%%%%%%%%%%%%%%%%%%%%%%%%%%%%%%

\subsection{Uniqueness, Existence, and Regularity}

   The following results establish the basic uniqueness, existence,
and regularity properties of weak solutions to (\ref{ne-}).   
Henceforth we will use $N[n]$ to denote the total photon number,
\[
N[n]=\int_0^1 n\,dx\,,
\] replacing the earlier notation $N[f]$.  We will 
also denote the positive part of a number $a$ by $a_+=\max\{a, 0\}$. 

\begin{thm}     [Stability and comparison] 
   \label{t.uniq}
Let $\nin$ and $\bnin$ satisfy \qref{init-data} for some $p\geq0$. 
Let $n$ and $\bar n $ be weak solutions of \qref{ne-} associated 
with the initial data $\nin$ and $\bnin$ respectively as defined by 
\qref{weak-soln}.  Set $c_p = p (p + 3)$.  Then
\begin{equation}
  \label{wf}
  \int_0^1 x^p (n - \bar n)_+ (x,t) \, dx 
  \leq e^{c_p t} \int_0^1 x^p (\nin - \bnin)_+ \, dx \,,  \quad
  \text{a.e.\ $t>0$} \,.
\end{equation}
Furthermore, if $\nin\ge\bnin$ a.e.\ on (0,1), then $n\ge\bar n$ 
a.e.\ on $Q$.  In particular, if $\nin=\bnin$ a.e.\ on (0,1)
then $n=\bar n$ a.e.\ on $Q$.
\end{thm}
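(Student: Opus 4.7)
The strategy is a weighted $L^1$-contraction argument of Kato type applied to the difference $w = n - \bar n$. Subtracting the weak formulations \qref{we-} satisfied by $n$ and $\bar n$, one finds that $w$ weakly satisfies
\[
\Dt w = \Dx\!\bigl[x^2\Dx w + (n + \bar n - 2x)\,w\bigr]
\]
on $(0,1)\times(0,\infty)$, together with the no-flux condition $x^2\Dx w + (n+\bar n - 2x)w = 0$ at $x=1$, inherited from $J = \bar J = 0$ there. The drift coefficient decomposes into the nonnegative combination $n+\bar n$, whose contribution will carry a favorable sign after testing with $\sgn^+(w)$, and a ``geometric'' piece $-2x$, which couples with the weight $x^p$ to generate exactly the constant $c_p = p(p+3)$ in \qref{wf}.

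I would approximate $(\cdot)_+$ by smooth convex functions $\eta_\eps$ with $\eta_\eps'' \ge 0$ supported in $[0,\eps]$, introduce a spatial cutoff $\chi_\delta\in C^\infty_c((0,1])$ that is zero on $[0,\delta]$ and equals one on $[2\delta, 1]$, and test the weak equation for $w$ against $x^p\chi_\delta(x)\,\eta_\eps'(w(x,t))$, employing Steklov averaging in $t$ to bypass the lack of time regularity of this quantity. Integration by parts in $x$ (with vanishing boundary terms at $x=1$ thanks to the no-flux condition, and at $x=0$ thanks to $\chi_\delta$) produces
\[
\frac{d}{dt}\!\int_0^1\! x^p\chi_\delta\,\eta_\eps(w)\,dx
\le p(p+1)\!\int_0^1\! x^p\chi_\delta\,\eta_\eps(w)\,dx
+ 2p\!\int_0^1\! x^p\chi_\delta\,w\,\eta_\eps'(w)\,dx
+ R_{\eps,\delta}(t),
\]
after: discarding the nonpositive dissipation $-\int x^{p+2}\chi_\delta\,\eta_\eps''(\Dx w)^2$; integrating $-p\int x^{p+1}\chi_\delta\,\Dx\eta_\eps(w)$ by parts again to recover $p(p+1)\int x^p\chi_\delta\,\eta_\eps(w)$; discarding the nonpositive $(n+\bar n)w$-drift contribution $-p\int x^{p-1}\chi_\delta(n+\bar n)\,w\,\eta_\eps'(w)$; retaining the $-2xw$-drift contribution $+2p\int x^p\chi_\delta\,w\,\eta_\eps'(w)$; and for the ``cross'' remainder $\int x^p\chi_\delta(n+\bar n - 2x)\,w\,\eta_\eps''(w)\,\Dx w$, introducing $\Phi_\eps(w) := \int_0^w s\,\eta_\eps''(s)\,ds$, which satisfies $|\Phi_\eps|\le \eps/2$ and tends to zero pointwise, and integrating by parts once more to see it vanishes in the limit $\eps\to 0$. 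Here $R_{\eps,\delta}$ collects all contributions involving $\chi_\delta'$.

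Sending first $\eps\to 0$ with $\delta$ fixed, then $\delta\to 0$ along a suitable subsequence, gives
\[
\frac{d}{dt}\int_0^1 x^p\,w_+\,dx \le c_p\int_0^1 x^p\,w_+\,dx,
\]
and Gronwall's inequality, combined with the initial trace \qref{con-} and the moment bound implicit in \qref{c:icp}, yields \qref{wf}. The comparison statement follows by swapping $(n,\bar n)$: if $\nin \ge \bnin$, applying the contraction to the pair $(\bar n, n)$ yields $\int_0^1 x^p(\bar n - n)_+\,dx \equiv 0$ for a.e.\ $t > 0$, whence $n \ge \bar n$ a.e.\ on $Q$ because $x^p > 0$ on $(0,1]$. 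The uniqueness claim is the case $\nin = \bnin$.

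The main technical obstacle is controlling the boundary-layer remainder $R_{\eps,\delta}$ as $\delta \to 0$. The derivative $\chi_\delta'$ concentrates mass of size $O(1/\delta)$ on $[\delta, 2\delta]$, so the diffusive boundary layer is of order $\delta^{p+1}\int_\delta^{2\delta}|\Dx w|\,dx$ and the convective boundary layer is of order $\delta^{p}\int_\delta^{2\delta}(n+\bar n)|w|\,dx$. Using the local $L^2$ regularity of $\Dx w$ from \qref{b}, the weighted integrability of $n$ and $\bar n$ from \qref{c:icp}, Cauchy--Schwarz, and extraction of a subsequence $\delta_k \to 0$ along which the dyadic averages are small (a standard Lebesgue-differentiation argument), both contributions vanish. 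This is the step where the admissibility hypothesis on the initial data is genuinely invoked.
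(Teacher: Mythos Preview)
Your overall strategy—a Kato-type weighted $L^1$ contraction, testing with $x^p\chi_\delta\eta_\eps'(w)$ after Steklov averaging—is the same as the paper's, and your computation of the constant $c_p=p(p+3)$ is correct. The gap is in your treatment of the cutoff remainder $R_{\eps,\delta}$.

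For the convective boundary term you write as $\delta^{p}\int_\delta^{2\delta}(n+\bar n)|w|\,dx$: the weak-solution hypotheses give only $n,\bar n\in L^2_{\rm loc}(Q)$ and $x^p n,x^p\bar n\in L^1$, neither of which controls a product like $(n+\bar n)|w|$ near $x=0$. But you do not need to estimate this term at all. Since $\chi_\delta'\ge 0$, $(n+\bar n)\ge0$, and $w\,\eta_\eps'(w)\ge0$, the contribution $-\int(n+\bar n)w\cdot x^p\chi_\delta'\eta_\eps'(w)$ has the same favorable sign as the $px^{p-1}\chi_\delta$ piece you already discard. What remains from the drift is only the $-2x$ part, which produces a term bounded by $C\delta^p\int_\delta^{2\delta}w_+\le C\int_\delta^{2\delta}x^pw_+$, and this vanishes by dominated convergence using \qref{c:icp}.

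For the diffusive boundary term you write as $\delta^{p+1}\int_\delta^{2\delta}|\Dx w|\,dx$: the hypothesis $\Dx w\in L^2_{\rm loc}(Q)$ places no restriction on how $\int_s^T\int_\delta^{2\delta}|\Dx w|^2$ grows as $\delta\to0$, so no subsequence argument is available. (The energy estimate $x\Dx n\in L^2$ is proved in Theorem~\ref{t.exist} and is not part of the weak-solution hypotheses used here.) The fix is to integrate by parts once more on $-\int x^{p+2}\chi_\delta'\Dx\eta_\eps(w)$, obtaining $\int\bigl[(p+2)x^{p+1}\chi_\delta'+x^{p+2}\chi_\delta''\bigr]\eta_\eps(w)$, which is bounded by $C\int_\delta^{2\delta}x^p w_+$ and again vanishes by dominated convergence. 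This is precisely how the paper reaches the inequality $\int_0^1\eta\,w_+(\cdot,\tau)\le\int_0^1\eta\,w_+(\cdot,\sigma)+\int_{Q_{[\sigma,\tau]}}(x^2\eta''+4x\eta')w_+$ before specializing $\eta=x^p\chi_\eps$; all boundary-layer contributions then involve only $w_+$, not $\Dx w$ or $n\bar n$, and are disposed of using \qref{c:icp} alone—no subsequence is needed.
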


From \qref{wf} we draw immediately the following conclusion on uniqueness.
\begin{cor}[Uniqueness] \label{c:uniq}
 Let $n$ and $\bar n $ be two weak solutions to (\ref{ne-}), subject to initial data
$\nin$, $\bnin$ respectively,  with $x^p\nin, x^p\bnin \in L^1((0, 1])$. Then
\begin{equation}\label{e.grest}
\int_0^1x^p|n(x, t)-\bar n(x, t) |\,dx \leq e^{c_pt} \int_0^1 x^p |\nin-\bnin|\,dx, \quad a.e.\; t>0.
\end{equation}
{For each initial data $\nin $ satisfying
$x^p n\init \in L^1(0,1)$ for some $p\ge0$,  %any $T>0$,
there exists at most one weak solution of (\ref{ne-}).
}
\end{cor}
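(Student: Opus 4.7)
The plan is to observe that the corollary is essentially a repackaging of Theorem \ref{t.uniq}, obtained by applying it twice (once with each ordering of the initial data) and symmetrizing. There is no substantive new work to do; the heavy lifting is entirely in Theorem \ref{t.uniq}.

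Concretely, I would first apply Theorem \ref{t.uniq} to the ordered pair $(n,\bar n)$ with initial data $(\nin,\bnin)$, yielding
\[
  \int_0^1 x^p (n - \bar n)_+(x,t)\, dx
  \leq e^{c_p t}\int_0^1 x^p (\nin - \bnin)_+\, dx \quad \text{a.e.\ } t>0.
\]
Then I would apply Theorem \ref{t.uniq} again to the swapped pair $(\bar n, n)$ with initial data $(\bnin, \nin)$, which is legitimate since both $\nin$ and $\bnin$ satisfy the moment hypothesis \qref{init-data} with the same exponent $p$. This gives
\[
  \int_0^1 x^p (\bar n - n)_+(x,t)\, dx
  \leq e^{c_p t}\int_0^1 x^p (\bnin - \nin)_+\, dx \quad \text{a.e.\ } t>0.
\]
Using $|a| = a_+ + (-a)_+$ inside both integrals, the sum of these two inequalities is precisely \qref{e.grest}.

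For the uniqueness statement, I would specialize \qref{e.grest} to $\bnin = \nin$ (so in particular $\bar n$ and $n$ are two weak solutions arising from the same initial data). The right-hand side then vanishes, so for a.e.\ $t>0$ we have $\int_0^1 x^p |n-\bar n|(x,t)\, dx = 0$. Since $x^p>0$ on $(0,1]$, this forces $n = \bar n$ a.e.\ on $Q$. (Equivalently, one can cite the final sentence of Theorem \ref{t.uniq} directly.)

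The main obstacle: there really isn't one — the content has already been absorbed into Theorem \ref{t.uniq}. The only point meriting care is verifying that Theorem \ref{t.uniq} is symmetric in its hypotheses, i.e.\ that it applies equally well to $(\bar n, n)$ as to $(n, \bar n)$; this is immediate from the formulation, since both solutions satisfy \qref{weak-soln} and both initial data satisfy \qref{init-data} with the same $p$.
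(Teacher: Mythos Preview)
Your proposal is correct and matches the paper's approach: the paper introduces the corollary with the phrase ``From \qref{wf} we draw immediately the following conclusion on uniqueness'' and gives no further proof, so the intended argument is exactly the symmetrization-and-add step you describe. There is nothing to add.
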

\begin{rem} Because $c_{_0}=0$, if \qref{init-data} holds with $p=0$ 
then \qref{e.grest} is the $L^1$-contraction property
\begin{equation}
  \label{e:l1c}
  \| (n - \bar n)(t) \|_{L^1(0,1)} 
  \leq \| \nin - \bnin \|_{L^1(0, 1)} \,.
\end{equation}
In particular, the total photon number $N[n]$ is nonincreasing in time.
\end{rem}

\begin{thm} 
  [Existence and global bounds] 
  \label{t.exist}
  Let $\nin$ satisfy \qref{init-data} for some $p\geq0$.  Then there 
exists a unique  global weak solution $n$ of (\ref{ne-}) as defined by 
\qref{weak-soln}.  Moreover $x^p n\in C([0,\infty);L^1(0,1))$ and 
 we have the following bounds:
\begin{itemize}

\item[(i)] (A universal upper bound) For every $t>0$,
$$
  n \leq x + \frac{1-x}{t} + \frac{2}{\sqrt{t}} \qquad\mbox{for a.e.\ $x\in(0,1)$}\,,
$$
\item[(ii)] (Olenik-type inequality) For a.e.\ $(x,t)\in Q$,
$$
  \partial_x n \geq -\frac{4}{t} \,.
$$
\item[(iii)] (Energy estimate)  $n\in C((0,\infty),L^2(0,1))$, and whenever $0<s<t$, 
\begin{equation}
  \label{en-}
  \int_0^1 n^2(x,t) \, dx 
  + \int_s^t \int_0^1 [n^2 + x ^2 (\Dx n)^2] \, dx \, d\tau
  \leq \int_0^1 n^2(x,s) \, dx + \frac{8}{3} (t - s) \,.
  \end{equation}
\end{itemize}
\end{thm}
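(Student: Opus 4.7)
My plan is to construct a weak solution as the limit of solutions $n_\eps$ to problems regularized by truncating the domain to $[\eps,1]$, to establish the bounds (i)--(iii) uniformly in $\eps$ at the regularized level, and then to pass to the limit. Uniqueness is already supplied by Corollary~\ref{c:uniq}. Concretely, for each $\eps\in(0,1)$ I would approximate $\nin$ by a smooth, bounded, nonnegative $\nin_\eps$ on $[\eps,1]$ and pose \qref{ne-} on $(\eps,1)\times(0,\infty)$, demanding that $J$ vanish at both $x=\eps$ and $x=1$. Since the diffusion coefficient $x^2$ is bounded below by $\eps^2$ on $[\eps,1]$, the equation is uniformly parabolic; standard parabolic theory---modulo the nonlinear zero-flux condition at $x=1$, which is dealt with by the fixed-point argument of Appendix~B---then gives a smooth, global, nonnegative solution $n_\eps$.

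I would next establish the three bounds (i)--(iii) for $n_\eps$ uniformly in $\eps$. For (i), a direct calculation shows that $n_{\rm super}(x,t) = x + (1-x)/t + 2/\sqrt t$ satisfies $\Dt n_{\rm super} - \Dx J[n_{\rm super}] \geq 0$ pointwise, with the correct sign of the flux at $x=1$ and $n_{\rm super}(\cdot,0^+)\equiv+\infty$; parabolic comparison then yields $n_\eps\leq n_{\rm super}$. For (ii), differentiating \qref{ne-} in $x$ and setting $v=\Dx n_\eps$ produces a linear parabolic equation for $v$ whose lower-order coefficients are controlled by bound (i); the constant-in-$x$ function $-4/t$ is a sub-solution because the gain $4/t^2$ in its time derivative dominates the forcing, and comparison yields $\Dx n_\eps\geq -4/t$. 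For (iii), multiplying \qref{ne-} by $n_\eps$ and integrating by parts on $(\eps,1)\times(s,t)$ produces the diffusive dissipation $\int x^2(\Dx n_\eps)^2$ and, via the identity $(n^2-2xn)\Dx n = \Dx(\tfrac13 n^3 - x n^2) + n^2$, an $\int n_\eps^2$ term; using (i) to estimate the boundary contributions yields \qref{en-}.

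With these uniform estimates in hand I would pass to the limit $\eps\to 0^+$. The energy estimate bounds $n_\eps$ uniformly in $L^2\loc((0,\infty);H^1\loc((0,1]))$, the PDE bounds $\Dt n_\eps$ in a negative-order space, and the Aubin--Lions lemma applied to compact subsets $[\delta,1]\times[s,t]\subset Q$ extracts a subsequence converging a.e.\ and strongly in $\llocq$ to a limit $n$; the bounds (i)--(iii) pass to the limit and the weak formulation \qref{we-} is preserved for test functions supported in $Q$. For the strong continuity $x^p n\in C([0,\infty);L^1(0,1))$ and the recovery of the initial data \qref{con-}, I would combine continuity on $(0,\infty)$ (from the energy estimate) with the weighted $L^1$-stability \qref{wf} of Theorem~\ref{t.uniq}: approximating $\nin$ in the $x^p$-weighted $L^1$ norm by smooth, bounded data makes the corresponding solutions Cauchy uniformly in $t$, so the continuity at $t=0$ available for smooth data transfers to the general case.

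The main difficulty I anticipate is ensuring that both the Oleinik bound (ii) and the recovery of the initial data as $\eps\to 0^+$ are compatible with the outflow character of the degeneracy at $x=0$. The boundary condition imposed at $x=\eps$ must be chosen so that the sub-solution argument requires no a priori control of $\Dx n_\eps$ at $x=\eps$, and so that no mass is spuriously trapped or released as the regularization is removed---both requirements are met by imposing a zero-flux condition at $x=\eps$, which is consistent with the hyperbolic outflow behavior expected at $x=0$ in the limit.
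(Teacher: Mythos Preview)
Your overall strategy---truncate to $[\eps,1]$, prove the three bounds uniformly, pass to the limit---matches the paper's.  The serious gap is your choice of boundary condition at $x=\eps$.  Imposing the no-flux condition $J=0$ there is \emph{not} consistent with outflow behavior; it conserves total photon number on $[\eps,1]$.  This immediately defeats the universal supersolution bound~(i): if $\int_\eps^1 \nin_\eps\,dx > \tfrac12$, then since $\int_\eps^1 S(x,t)\,dx \to \tfrac12$ as $t\to\infty$, $n_\eps$ must eventually exceed $S$ somewhere.  The same choice also breaks the Oleinik argument: zero flux at $x=\eps$ forces $\eps^2\Dx n_\eps(\eps,t) = 2\eps n_\eps - n_\eps^2$, which for $n_\eps(\eps,t)\gg\eps$ (as happens for small $t$ by the supersolution itself) makes $\Dx n_\eps(\eps,t)$ of order $-n_\eps^2/\eps^2$, so the comparison with $-4/t$ fails at the boundary.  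Finally, in your energy identity the boundary contribution at $x=\eps$ becomes $+\tfrac23 n_\eps^3(\eps,t)$ rather than something nonpositive, destroying the clean constant $\tfrac83$.

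The paper imposes instead $\eps^2\Dx n_\eps - 2\eps n_\eps = 0$ at $x=\eps$, equivalently $J_\eps(\eps,t)=n_\eps^2(\eps,t)\ge0$.  This lets mass exit (as it should at an outflow boundary), forces $\Dx n_\eps(\eps,t)=2n_\eps/\eps\ge0$ so the Oleinik comparison is trivially satisfied there, makes the supersolution check at $x=\eps$ reduce to the negativity of $\eps\Dx S-2S$, and makes the boundary term in the energy estimate equal to $-2n_\eps^3(\eps,t)\le 0$.  All three bounds then go through essentially as you outline.  The paper also separates the regularization into two layers---first smoothing the data by a parameter $\kappa$, then truncating by $\eps$---and uses the BV bound coming from the Oleinik inequality together with an explicit $L^1$-in-$x$, \holder-in-$t$ modulus of continuity to get compactness and the $C([0,\infty);L^1)$ property, rather than Aubin--Lions; the weighted stability estimate \qref{wf} is used, as you suggest, to pass from smooth to rough initial data.
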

Note that the Oleinik-type inequality allows for the formation 
of `shock waves' in $n$ at $x=0$, but rules out oscillations.

\begin{thm}
  [Regularity away from $x=0$]
  \label{t:regeg}
  For the global weak solution $n$ from Theorem~\ref{t.exist},
  the quantities $n$, $\Dx n$, $\Dt n$ and $\Dxx n$ are
  locally \holder-continuous 
  on $Q$. Furthermore, $n$ is smooth in the interior of $Q$. 
\end{thm}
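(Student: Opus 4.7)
The plan is to exploit the fact that on any subregion of $Q$ bounded away from $x=0$, the equation $\partial_t n = \partial_x J$ is a uniformly parabolic quasilinear PDE with bounded coefficients, so standard parabolic regularity theory applies. Expanding $\partial_x J$ with $J = x^2 \partial_x n + n^2 - 2xn$ gives
\[
  \partial_t n = x^2 \partial_{xx} n + 2(n - x)\,\partial_x n - 2n,
\]
which on any strip $[\delta, 1] \times [\tau, T]$ (with $0<\delta\le1$, $0<\tau<T$) is uniformly parabolic in the principal part. By the universal upper bound of Theorem~\ref{t.exist}(i), $n$ is bounded on such a strip, so the first-order coefficient $2(n-x)$ and zero-order coefficient $-2n$ are bounded measurable, while the coefficient $x^2$ is smooth and bounded above and below.

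First, I would establish local H\"older continuity via the De Giorgi--Nash--Moser theorem. Rewriting in divergence form $\partial_t n = \partial_x(x^2 \partial_x n) + \partial_x(n^2 - 2xn)$, the principal coefficient $x^2$ is uniformly elliptic on $[\delta,1]$ and the lower-order flux $n^2 - 2xn$ is bounded, so classical theory for bounded weak solutions of linear divergence-form parabolic equations with bounded measurable coefficients (treating $n$-dependent terms as known inhomogeneities since $n$ is already $L^\infty_\text{loc}$) yields local H\"older continuity of $n$ on compact subsets of $(0,1)\times(0,\infty)$.

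Next I would bootstrap via Schauder estimates. With $n\in C^{\alpha,\alpha/2}_\text{loc}$, the coefficients $2(n-x)$ and $-2n$ in the non-divergence form become H\"older continuous, so interior parabolic Schauder theory applied to the linear equation with frozen (known, H\"older) coefficients gives $n \in C^{2+\alpha, 1+\alpha/2}_\text{loc}$; in particular $\partial_x n$, $\partial_t n$, and $\partial_{xx} n$ are locally H\"older. Iterating Schauder (at each stage the coefficients inherit the regularity just gained) then yields $n \in C^\infty$ in the interior of $Q$.

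Finally, to extend H\"older continuity and higher regularity up to the lateral boundary $\{x=1\}$ (which belongs to $Q$), I would use the no-flux condition $J|_{x=1}=0$, which at $x=1$ reduces to the nonlinear Robin condition $\partial_x n = 2n - n^2$. Since $n$ is already bounded, the right-hand side is a bounded H\"older function of $n$ and one can apply boundary H\"older estimates and then boundary parabolic Schauder for Robin problems, using reflection across $x=1$ or a standard flattening argument. I expect the main obstacle to be precisely this boundary analysis: the flux boundary condition is nonlinear and couples $\partial_x n$ to $n^2$, so some care is required to set up the linearization (viewing the right side as a known H\"older datum once $n$ is known to be H\"older) before standard Schauder-at-the-boundary estimates can be invoked. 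After that, iteration upgrades the regularity at $x=1$ together with the interior regularity.
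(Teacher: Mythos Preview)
Your approach is correct in outline and takes a genuinely different route from the paper. (One minor slip: expanding $\partial_x J$ gives $\partial_t n = x^2\partial_{xx}n + 2n\,\partial_x n - 2n$, not $2(n-x)\partial_x n$; the $2x\partial_x n$ terms cancel. This does not affect your argument.) The paper does \emph{not} work directly with the weak solution $n$. Instead, in Appendix~C it returns to the three-level approximation $n_h\to n_\eps\to n_\kappa\to n$ built in the existence proof, and derives a cascade of weighted $L^2$ energy estimates on $n_h$, $J_h$, $\partial_t n_h=\partial_x J_h$, and $\partial_t J_h$, uniform in $h,\eps,\kappa$, on nested rectangles $W_m=[(m{+}1)a,1]\times[ms,T]$. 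These $L^2$ bounds on $\partial_t v$ and $\partial_{xx}v$ for $v\in\{n_h,J_h,\partial_x J_h\}$ are converted to H\"older bounds via the anisotropic Sobolev embedding $W_2^{2,1}\hookrightarrow C^{1/2,1/4}$ developed in Appendix~A, and then passed to the limit. By contrast, you invoke De Giorgi--Nash--Moser on the limiting weak solution itself (using only the $L^\infty$ bound and $n,\partial_x n\in L^2_{\rm loc}$ from Theorem~\ref{t.exist}), then bootstrap via Schauder. Your route is shorter and decoupled from the particular approximation scheme, but leans on heavy off-the-shelf machinery, including boundary De Giorgi--Nash--Moser and boundary Schauder for the nonlinear Robin condition at $x=1$; the paper's route is self-contained and yields explicit estimates tied to the construction, at the cost of the four-step energy hierarchy and a bespoke embedding theorem.
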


%%%%%%%%%%%%%%%%%%%%%%%%%%%%%%%%%%%%%%%%%%%%%%%%%%%%%%%%%%%%%%%%%%%%%%

\subsection{Dynamics of solutions}

   Next we state our main results concerning the formation of 
condensates and the large-time behavior of solutions.  Observe that 
the bounds in (i) and (ii) of Theorem \ref{t.exist} imply the 
existence of the right limit $n(0^+,t)$, for each $t>0$.

\begin{thm}
  [Formation and growth of condensates]
  \label{t:cond}
  Let $\nin$ satisfy \qref{init-data} for some $p\geq0$. 
  Let $n$ be the unique global weak solution to (\ref{ne-}) associated 
  with $\nin$.  Then

\begin{itemize}

\item[(i)] (Conservation of photons.) For every $t>s>0$ we have
$$
  \int_0^1 n(x,t) \, dx 
  = \int_0^1 n(x,s) \, dx - \int_s^t n(0^+,\tau)^2 \, d\tau \,.
$$

\item[(ii)] (Persistence.)  There exists $t_*\in[0,\infty]$ such that
            $n(0^+,t)>0$ whenever $t> t^*$, and $n(0^+,t)=0$
            whenever $0\le t<t_*$.

\item[(iii)] (Formation.) If $N[\nin]>\frac{1}{2}$ then $n(0^+,t)>0$ whenever .
\begin{equation*} 
  \frac{1}{2\sqrt{t}} < \sqrt{1 + \delta} - 1 \,, \quad
  \text{where} \quad 2 \delta = N[\nin] - \frac{1}{2} \,.
\end{equation*}

\item[(iv)] (Absence.) If $\nin \leq x$, then $t_*=\infty$. I.e., for every $t>0$ we have $n(0^+,t)=0$ 
            and $N[n(\cdot,t)]=N[\nin]$.

\end{itemize}
\end{thm}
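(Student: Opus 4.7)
For part (i), the plan is to localize in $x$. By the interior regularity (Theorem~\ref{t:regeg}), the equation $\partial_\tau n = \partial_x J$ holds classically on $(0,1)\times(0,\infty)$ (up to $x=1$); integrating over $(a,1)\times(s,t)$ and using $J(1,\tau)=0$ yields
\[
\int_a^1 n(x,t)\,dx - \int_a^1 n(x,s)\,dx = -\int_s^t J(a,\tau)\,d\tau
\]
for every $a\in(0,1)$ and $0<s<t$, with $J(a,\tau) = a^2\partial_x n(a,\tau) + n(a,\tau)^2 - 2a\,n(a,\tau)$. The Oleinik inequality and universal upper bound (Theorem~\ref{t.exist}(i)--(ii)) make $n(\cdot,\tau)$ of bounded variation on $(0,1]$, so the right limit $n(0^+,\tau)$ exists for each $\tau>0$; dominated convergence then gives $\int_s^t[n(a,\tau)^2 - 2a\,n(a,\tau)]d\tau \to \int_s^t n(0^+,\tau)^2\,d\tau$ as $a\to 0^+$. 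For the diffusive contribution, the energy estimate (Theorem~\ref{t.exist}(iii)) gives $\iint x^2(\partial_x n)^2\leq C(s,t)$, and averaging in $a\in(0,\epsilon)$ with Cauchy--Schwarz yields $\epsilon^{-1}\int_0^\epsilon|\int_s^t a^2\partial_x n(a,\tau)\,d\tau|\,da = O(\epsilon^{1/2})$, so along a suitable sequence $a_k\to 0^+$ the diffusive term vanishes. The left side converges by monotone convergence (using $n(\cdot,t)\in L^1(0,1)$ from the universal upper bound), giving (i).

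Part (ii) is the main nontrivial step and I will handle it by an explicit piecewise-linear sub-solution. Assume $n(0^+,t_0) = m > 0$; the Oleinik inequality yields $n(x,t_0)\geq (m - 4x/t_0)_+$. Choose $A_0\leq m$ and $B_0\geq 4/t_0$ with $B_0>A_0$, so $c:=A_0/B_0<1$, and let $(A,B)$ solve the autonomous system $\dot A = -2A(B+1)$, $\dot B = -2B(B+1)$. An immediate calculation gives $(A/B)'\equiv 0$; both remain positive for all $t$, and $A(t)\to 0$. Define $\bar n(x,t) = (A(t)-B(t)x)_+$. A direct computation yields $\partial_x J_{\bar n} = 2(B+1)(Bx-A)$ on $\{\bar n>0\}$, so $\partial_t \bar n - \partial_x J_{\bar n}=0$ there by the ODEs; across the stationary free boundary $x=c$ the flux $J_{\bar n}$ jumps upward from $-Bc^2$ to $0$, contributing a negative Dirac mass to $\partial_t\bar n - \partial_x J_{\bar n}$ in the distributional sense, so $\bar n$ is a weak sub-solution. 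Since $\bar n\equiv 0$ on $[c,1]$, the zero-flux condition at $x=1$ holds trivially, and by construction $\bar n\init = (A_0 - B_0 x)_+\leq n(\cdot,t_0)$. A one-sided extension of Theorem~\ref{t.uniq} (implemented by the same doubling-of-variables/renormalization of Section~\ref{s:unique}, with the favorably-signed Dirac giving an inequality rather than equality) then delivers $n(0^+,t_0+t)\geq \bar n(0^+,t) = A(t) > 0$ for every $t>0$, proving (ii) with $t_*:=\inf\{t>0:n(0^+,t)>0\}$.

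Parts (iii) and (iv) follow as quick consequences. For (iii), the universal upper bound gives
\[
N[n(t)] \leq \int_0^1\Big(x + \frac{1-x}{t} + \frac{2}{\sqrt t}\Big)dx = \tfrac12 + \tfrac1{2t} + \tfrac{2}{\sqrt t}.
\]
Setting $u=1/\sqrt t$, the stated condition $1/(2\sqrt t) < \sqrt{1+\delta}-1$ is equivalent to $u^2/2 + 2u < 2\delta = N[\nin] - \tfrac12$, which forces $N[n(t)] < N[\nin]$. By (i) (sending $s\to 0^+$ using the $L^1$-continuity $n\in C([0,\infty);L^1(0,1))$ of Theorem~\ref{t.exist}, valid because $N[\nin]<\infty$ implies $\nin\in L^1$), $\int_0^t n(0^+,\tau)^2\,d\tau = N[\nin]-N[n(t)] > 0$, so $n(0^+,\tau_*)>0$ at some $\tau_*\in(0,t)$, and persistence (ii) forces $n(0^+,t)>0$. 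For (iv), I observe that $\bar n(x)\equiv x$ is a stationary classical solution of \qref{ne-}: its flux $J_{\bar n} = x^2\cdot 1 + x^2 - 2x\cdot x \equiv 0$ satisfies $J(1,\cdot)=0$. Theorem~\ref{t.uniq} applied to $\nin\leq \bar n\init\equiv x$ (valid since $\nin\in L^1$) yields $n(x,t)\leq x$ a.e., so $n(0^+,t)=0$ for all $t>0$, and (i) with $s\to 0^+$ then gives $N[n(\cdot,t)]=N[\nin]$.

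The main obstacle will be the extension of Theorem~\ref{t.uniq} needed in part (ii): I need the $L^1$-type comparison to apply between the weak solution $n$ and the sub-solution $\bar n$, whose flux $J_{\bar n}$ has a jump across the interior free boundary $x=c$. The sign of the jump is favorable, and the doubling-of-variables argument of Section~\ref{s:unique} should accommodate this with essentially no change, but care is required to verify that the entropy-type test functions used there see only the correct distributional sign from the Dirac contribution at $x=c$.
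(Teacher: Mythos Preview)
Your arguments for parts (i), (iii), and (iv) are essentially the paper's own, with only cosmetic variation in (i): the paper averages the identity over $x\in(\epsilon,a)$ for the approximating solutions $n_\epsilon$ and passes to limits $\epsilon\to0$ then $a\to0$, whereas you work directly with $n$ via interior regularity and average over $(0,\epsilon)$; the energy estimate and Cauchy--Schwarz control of the diffusive term are the same.

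For part (ii), your sub-solution $\bar n=(A-Bx)_+$ is \emph{exactly} the paper's $\hat n(x,t)=b(t)(\hat x-x)_+$: since $A/B\equiv c$ is constant and $\dot B=-2B(B+1)$, your $B$ is the paper's $b$ and $c$ is its $\hat x$. The difference lies in the comparison step. The paper never invokes the weak $L^1$-comparison machinery of Section~\ref{s:unique}; instead it works classically on the subdomain $(0,\hat x]\times[t^*,\infty)$ using the barrier $\Psi=-t+\log x$, which is so negative near $x=0$ that $v=n-\hat n-\epsilon\Psi>0$ there automatically, and checks $v>0$ at $x=\hat x$ directly from $\hat n(\hat x,t)=0\le n$. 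A standard interior minimum-principle argument then gives $n\ge\hat n$ without ever touching the free boundary or any distributional jump. Your route---extending Theorem~\ref{t.uniq} to a sub-solution whose flux has a Dirac jump---can be made to work (the test function $\eta\zeta'(w_h)$ used in Section~\ref{s:unique} is nonnegative, so the favorable sign of the Dirac propagates through the Steklov averaging to yield the correct inequality in place of \eqref{i.ez}), but this requires tracking signs through several approximation steps, which you have not actually done. The paper's barrier argument buys you a completely self-contained classical proof that sidesteps this entirely; if you prefer your route, you should at minimum verify that the Steklov-averaged test function remains nonnegative and that the boundary term at $x=1$ (where $\bar n\equiv0$) causes no trouble in \eqref{h.t2}--\eqref{h.t2b}.
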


The formula in part (i) justifies a physical description of the photon energy
distribution that contains a Dirac delta mass at $x=0$, corresponding
to a condensate of photons at zero energy that keeps total photon number conserved.  By the formula in part (i), 
  the quantity
\[
  \int_s^t n(0^+,\tau)^2 \, d\tau
\]
is the number of photons that have entered the condensate between 
times $s$ and $t$.  
This quantity is nonnegative, meaning the condensate 
behaves like a `black hole'---photons go in but do not come out.
Part (ii) shows that a condensate never stops growing once it starts. 
Part (iii) states that a 
condensate must develop in finite time for any initial data $n\init$
with more photons than the maximal equilibrium $n_0=x$.  
Part (iv) means that for initial 
data bounded above by $n_0$, a condensate does not form.

According to Theorems 2.1 and 2.2, however, we see that 
the notion of a condensate is not strictly required
for mathematically discussing existence and uniqueness.
 The solution is determined by
the conditions imposed for $x\in(0,1]$, and it so happens that 
the total photon number can decrease due to an outward flux at $x=0^+$.

\begin{thm}
  [Large-time convergence] 
  \label{t:lim}
  Let $\nin$ satisfy \qref{init-data} for some $p\geq0$. 
Let $n$ be the unique global weak solution to (\ref{ne-}) associated 
with $\nin$.  Then there exists $\mu \geq 0$ such that
$$
  \lim_{t \to \infty} \| n(\cdot,t) - \nmu \|_1 = 0 \,, \qquad
  \text{where} \quad \nmu(x) = \frac{x^2}{x + \mu} \,.
$$
\end{thm}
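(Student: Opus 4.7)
The plan is to apply LaSalle's invariance principle in $L^1(0,1)$, with the total photon number $N[n]$ as the principal Lyapunov functional and the $L^1$-contraction of Corollary~\ref{c:uniq} supplying continuity of the solution semigroup $S(t)$. First, I would use the universal upper bound of Theorem~\ref{t.exist}(i) together with the Oleinik-type inequality of Theorem~\ref{t.exist}(ii) to conclude that, for any $t_0 > 0$, the orbit $\{n(\cdot,t) : t \geq t_0\}$ is uniformly bounded in $L^\infty(0,1)$ and has uniformly bounded variation, hence is relatively compact in $L^1(0,1)$ by Helly's selection theorem. The $\omega$-limit set
\[
\Omega \;=\; \bigcap_{T \geq 1} \overline{\{ n(\cdot, t) : t \geq T \}}^{L^1}
\]
is therefore nonempty and compact.

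Next, because $S(s)$ is continuous on $L^1(0,1)$ by the contraction estimate~\eqref{e:l1c}, the standard semigroup argument gives $S(s)\Omega \subseteq \Omega$ for every $s > 0$. Theorem~\ref{t:cond}(i) shows $N[n(\cdot,t)]$ is nonincreasing and admits a limit $N_\infty \geq 0$, so $N \equiv N_\infty$ on $\Omega$. Applying the same conservation identity along the orbit through any $n_\ast \in \Omega$ then forces
\[
\int_0^s \bigl( S(\tau) n_\ast \bigr)(0^+, \tau)^2 \, d\tau = 0 \qquad \text{for every } s > 0,
\]
so the boundary trace at $x=0^+$ vanishes almost everywhere in time along every orbit contained in $\Omega$.

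The crux is to upgrade this vanishing-trace property to stationarity of $n_\ast$. I would proceed by iterating the $\omega$-limit construction applied to $n_\ast$ itself, using the interior H\"older regularity from Theorem~\ref{t:regeg} and the energy estimate~\eqref{en-} to extract $L^2\loc$-convergence and pass to the limit in the weak formulation~\eqref{we-}. Combined with the vanishing boundary trace at $x=0^+$ and the no-flux condition $J(1,\cdot)=0$, this should force the iterated limit to satisfy the stationary ODE $x^2 n' + n^2 - 2xn = 0$ on $(0,1)$; solving it with $J(1)=0$ recovers the family $\{n_\mu\}_{\mu \geq 0}$ from~\eqref{eq:s0}, so every element of $\Omega$ is an equilibrium.

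Uniqueness of the limit then follows from strict monotonicity: $\mu \mapsto N[n_\mu] = \int_0^1 x^2/(x+\mu)\,dx$ is strictly decreasing on $[0,\infty)$, while $N \equiv N_\infty$ on $\Omega$, so $\Omega = \{n_{\mu_\ast}\}$ for a single $\mu_\ast$ and the full orbit converges to $n_{\mu_\ast}$ in $L^1(0,1)$. The main obstacle is the stationarity step, because~\eqref{eq:0} admits no obvious entropy with strictly negative dissipation: beyond $N$ itself one lacks a classical Lyapunov functional, so the argument must substitute the vanishing boundary trace at $x=0^+$, the regularity of Theorem~\ref{t:regeg}, and the fact that $\|n(\cdot,t) - n_\mu\|_{L^1}$ is separately nonincreasing for \emph{every} $\mu$ as replacements for the missing entropy structure.
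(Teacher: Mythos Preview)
Your setup matches the paper's: BV compactness from Theorem~\ref{t.exist}(i)--(ii) gives a nonempty $\omega$-limit set $\Omega$, the $L^1$-contraction makes $\Omega$ positively invariant, and you correctly note that $t\mapsto\|n(\cdot,t)-n_\mu\|_1$ is nonincreasing for every $\mu\ge0$, so each such distance is constant on $\Omega$. The gap is in the stationarity step.

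The vanishing-trace argument cannot work on its own: by Theorem~\ref{t:cond}(iv), any solution with $\nin\le x$ satisfies $n(0^+,t)\equiv0$ for all $t$, yet such a solution is not stationary unless $\nin$ is already some $n_\mu$. Nor does the energy estimate \eqref{en-} help, since the source term $\tfrac{8}{3}(t-s)$ prevents $\int n^2$ from being a Lyapunov functional. Iterating the $\omega$-limit construction on $n_\ast\in\Omega$ yields nothing new, because $\Omega$ is already invariant and you simply recover elements of $\Omega$; nothing in your scheme forces the iterated limit to be time-independent.

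The missing ingredient is the \emph{strict} $L^1$-contraction of Lemma~\ref{contraction}: if two $C^1$ solutions cross transversely at some instant, their $L^1$ distance strictly decreases afterward. The paper uses it as follows. Let $b\in\Omega$; since $\|U(t)a-n_\mu\|_1$ is nonincreasing with limit $c_\mu$, and since $U(s)b\in\Omega$ for all $s\ge0$, one has $\|U(s)b-n_\mu\|_1=c_\mu$ for all $s\ge0$ and all $\mu\ge0$. If $b\ne0$ is not an equilibrium, then $g(x)=x-x^2/b(x)$ is not constant where $b>0$, so $g'(x^*)\ne0$ at some $x^*$. With $\mu^*=-g(x^*)$ one has $b(x^*)=n_{\mu^*}(x^*)$ but $\partial_x b(x^*)\ne\partial_x n_{\mu^*}(x^*)$: a transverse crossing. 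Lemma~\ref{contraction} then gives $\|U(s)b-n_{\mu^*}\|_1<\|b-n_{\mu^*}\|_1=c_{\mu^*}$ for $s>0$, a contradiction. (A separate comparison argument rules out $b\equiv0$ when $a\not\equiv0$.) This is precisely how the family $\{\|{\cdot}-n_\mu\|_1\}_{\mu\ge0}$ of Lyapunov functionals substitutes for a single entropy---but the substitution only works through the strict-contraction lemma, which your proposal does not invoke.
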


The equilibrium $\nmu$ to which a solution converges  
  depends not only on $N[\nin]$, but also on details of $\nin$.
In some special cases, $\mu$ can be explicitly determined.

\begin{cor} \label{c:mulim}
Let $n$ be the global solution to (\ref{ne-}),
subject to initial data satisfying  $\nin(x) \geq x $ for $ x\in (0, 1]$. 
Then
$$
  \lim_{t \to \infty} n(x, t) = n_0(x)=x.
$$
Moreover,
\begin{equation}
  \label{nd:f}
  |n(x,t) - x| \leq \frac{1}{t} + \frac{2}{\sqrt{t}} \,, \qquad
  \text{for every $t>0$} \,.
\end{equation}
If  $\nin(x) \leq x $ for $ x\in (0,1]$, then
$$
  \lim_{t \to \infty} n(x,t) = n_\mu(x) = \frac{x^2}{x + \mu} \,,
$$
with $\mu$ uniquely determined by the relation %$N[\nmu]=N[\nin]$, i.e.,
\begin{equation}\label{mu}
N[\nin] = N[ n_\mu] =\frac12 -\mu +\mu^2\log\left(1+\frac1\mu\right)\ .
\end{equation}
\end{cor}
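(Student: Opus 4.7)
The plan is to view Corollary~\ref{c:mulim} as a specialization of the preceding results to two comparison-friendly classes of initial data. The key observation is that $n_0(x)=x$ is itself a stationary weak solution of \qref{ne-}, since substitution in $J=x^2\Dx n+n^2-2xn$ gives $J\equiv 0$ (and the no-flux boundary condition at $x=1$ is satisfied trivially). This makes $n_0$ available as a reference solution in the comparison principle of Theorem~\ref{t.uniq}.

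For the case $\nin(x)\ge x$, Theorem~\ref{t.uniq} applied with $\bar n\equiv n_0$ yields $n(x,t)\ge x$ a.e.\ in $Q$. Combining with the universal upper bound from Theorem~\ref{t.exist}(i) gives
$$
  x\le n(x,t)\le x+\frac{1-x}{t}+\frac{2}{\sqrt t}\,,
$$
and subtracting produces $0\le n(x,t)-x\le (1-x)/t+2/\sqrt t\le 1/t+2/\sqrt t$, which is the claimed bound \qref{nd:f}. Letting $t\to\infty$ delivers uniform (hence pointwise and $L^1$) convergence $n(\cdot,t)\to n_0$.

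For the case $\nin(x)\le x$, Theorem~\ref{t:cond}(iv) guarantees that no condensate forms and that $N[n(\cdot,t)]=N[\nin]$ for every $t>0$. Theorem~\ref{t:lim} then produces some $\mu\ge 0$ with $n(\cdot,t)\to n_\mu$ in $L^1(0,1)$; passing to the limit in this conservation identity gives $N[n_\mu]=N[\nin]$. The closed form in \qref{mu} follows by elementary integration: substituting $u=x+\mu$,
$$
  N[n_\mu]=\int_0^1\frac{x^2}{x+\mu}\,dx
  =\int_\mu^{1+\mu}\Bigl(u-2\mu+\frac{\mu^2}{u}\Bigr)\,du
  =\frac{1}{2}-\mu+\mu^2\log\!\Bigl(1+\frac{1}{\mu}\Bigr).
$$
Since $\partial_\mu n_\mu(x)=-x^2/(x+\mu)^2<0$, the map $\mu\mapsto N[n_\mu]$ is strictly decreasing from $\tfrac12$ at $\mu=0$ down to $0$ as $\mu\to\infty$, while the hypothesis $\nin\le x$ forces $N[\nin]\le\int_0^1 x\,dx=\tfrac12$, so \qref{mu} uniquely determines $\mu$.

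There is no substantial obstacle here: the corollary merely packages tools already in hand---comparison with the equilibrium $n_0$, the super-solution bound, conservation of photon number under the condition $\nin\le x$, and the LaSalle-type invariance principle---against the explicit one-parameter family $\{n_\mu\}$. The only ingredients requiring a brief verification are that $n_0(x)=x$ is a genuine weak solution in the sense of \qref{weak-soln} (immediate from $J\equiv 0$) and the strict monotonicity of $\mu\mapsto N[n_\mu]$ (immediate from the sign of $\partial_\mu n_\mu$).
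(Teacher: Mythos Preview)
Your proof is correct and follows essentially the same approach as the paper: comparison with the equilibrium $n_0(x)=x$ combined with the universal super-solution bound for the first case, and conservation of photon number plus Theorem~\ref{t:lim} for the second. You provide somewhat more detail than the paper does (the explicit integral computation and the monotonicity argument for uniqueness of $\mu$), but the underlying argument is the same.
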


\begin{rem}
For the model  equation (\ref{eq:0}), these results provide a definite answer  
to the main issues of concern.  The main assertions are expected to hold true  
for the full Kompaneets equation (\ref{eq:ko}), and may be partially true for some 
extensions of the Kompaneets equation \cite{Rg03,Co71}.
Theorems 2.1 and 2.2 improve upon  Theorems 1 and 2 of \cite[p.~3839]{EHV98}
for the Kompaneets equation, in the sense that we impose no growth 
condition near $x=0$.   
Though for a model equation, Theorems 2.4 and 2.5 provide a 
theoretical justification of observations made previously, including the 
detailed singularity analysis given in \cite{EHV98}, the self-similar blow-up 
of the Kompaneets equation's solution in finite time \cite{JPR06}, as well as 
the classical result of  Levich and Zel'dovich \cite{LZ69} on shock waves in 
photon spectra.
\end{rem}

\begin{rem}
We remark that  the quantum entropy defined by 
$$
  H[n] = \int_0^1 [x n - x^2 \log(n)] \, dx
$$
satisfies 
$$
  H[n(t)] 
  + \int_0^t \int_0^1 
        n^2 \left( 1 - \partial_x \left( \frac{x^2}{n} \right) \right)^2 dx 
  \leq H[\nin] \,, \; \forall t>0,
$$
provided $H[\nin]<\infty.$  
As we have no need for this entropy dissipation inequality
in this paper, we omit the proof. 
We mention, however, that the entropy $H[n]$ is not sensitive to the presence
of the Bose-Einstein condensate.
\end{rem}

{\ifdraft
\vfil\pagebreak 
\fi}

\section{Uniqueness of weak solutions}\label{s:unique}

This section is primarily devoted to the proof of Theorem \ref{t.uniq}. 
At the end of this section we include an additional result, a strict $L^1$ contraction property,
which will be used in section~\ref{s:largetime}.

\subsection{Proof of Theorem~\ref{t.uniq}.}
Let $w=n-\bar n$ and $\hat n=n+\bar n-2x$.
Then from (\ref{we-}) for both $n$ and $\bar n$ it
follows that
\begin{equation}\label{w0+}
\int_{\Qplus}( w\, \Dt\psi -
(x^2\Dx w +\hat n w)
 \, \Dx\psi )\dX =0.
\end{equation}
The estimate \qref{wf} can be derived formally by using
a test function of form $\psi=x^p\one_{[0, t]}H(w)$,
where $H(w)$ is the usual Heaviside function
and $\one_E$ is the characteristic function of a set $E$.
This is not an admissible test function, however, and
instead we need several approximation steps.

For use below, we fix a smooth, nondecreasing cutoff function
$\chi:\R\to [0,1]$
with the property $\chi(x)=0$ for $x\le1$, $\chi(x)=1$ for $x\ge 2$,
and set $\chi_\eps(x)=\chi(x/\eps)$ for $\eps>0$. 
For any interval $I\subset[0,\infty)$ we define the space-time domains
\begin{equation}\label{d.QI}
Q_I = (0,1]\times I, \qquad\mbox{so}\ \ Q=Q_{(0,\infty)}=(0,1]\times(0,\infty).
\end{equation}

\smallskip
\noindent 1. (Steklov average in $t$.)
For $h\ne0$, the Steklov average $u_h$ of a continuous function
$u$ on $\Qplus$ is defined by
extending $u(x,t)$ to be zero for $t<0$, and setting
$$
u_h(x,t)=\frac{1}{h}\int_t^{t+h}u(x, s)\,ds,  \qquad (x,t)\in \Qplus.
$$
By density arguments,
the Steklov average extends to an operator with
the following properties:
First, for $1\le p<\infty$, if $u\in L^p\loc(\Qplus)$ then
$u_h\in L^p\loc(\Qplus)$ with weak derivative
\[
\D_t u_h = \frac{ u(\cdot,\cdot+h)-u(\cdot,\cdot)}h
\in L^p\loc(\Qplus).
\]
Moreover, one has $u_h\to u$ in $L^p\loc(\Qplus)$ as $h\to 0$.

Since %$n,\bar n\in \Bplus
\[
n,\bar n \in \Bplus:=\{n\mid n,\  \D_x n \in L^2\loc(\Qplus) \mbox{ with } n\ge0\},
\]
 it follows $\D_x^j\D_t^kw_h\in L^2\loc(\Qplus)$
for $j,k=0,1$, whence $w_h$ is continuous on $\Qplus$.
If $\psi$ is a $C^1$ test function with compact support
in $\Qplus$, the same is true for $\psi_{-h}$ if
$|h|$ is sufficiently small, and
a simple calculation with integration by parts and justified
by density of smooth functions shows that
$$
\int_{\Qplus} w\, \Dt(\psi_{-h})\dX =
\int_{\Qplus} w (\Dt\psi)_{-h}\dX
=\int_{\Qplus} w_h\Dt\psi\dX=-\int_{\Qplus} (\Dt w_{h}) \psi\dX.
$$
Substitution of this into \qref{w0+} and
treating the other term similarly, one finds
\begin{equation}\label{qq}
\int_{\Qplus} \Bigl((\Dt w_{h}) \psi +
(x^2\Dx w +\hat n w)_h
\, \partial_x \psi \Bigr)\dX=0.
\end{equation}

Recall $n, \bar n\in \Bplus$, hence
$\hat n w$ and $\D_x(\hat n w)$ are in
$L^1\loc(\Qplus)$,
whence $(\hat n w)_h$ is continuous in $\Qplus$.
By replacing $\psi(x,t)$ by
$\psi(x,t)\chi_\eps(t-\sigma)\chi_\eps(\tau-t)$ and taking $\eps\to0$
using dominated convergence, we find that
for any $C^1$ function $\psi$ with
compact support in $Q_{[0,\infty)}$,
\begin{equation}\label{qq+}
\int_{Q_{[\sigma,\tau]}} \Bigl( (\Dt w_{h}) \psi +
(x^2\Dx w +\hat n w)_h
\, \Dx \psi \Bigr)\dX=0,
\qquad\mbox{whenever $[\sigma,\tau]\subset(0,\infty)$.}
\end{equation}
By approximation, \qref{qq+} holds
for any $\psi \in W^{1,2}(\Qplus)$ supported in $Q_{[0,\infty)}$.
\\

\noindent  2. (Integrate in $t$.)
Define $\zeta(a)=  \int_0^a \chi_\epb(u)\,du$ as
a smooth, convex
approximation to the function $a\mapsto a_+$.
Since $\zeta$ is Lipshitz with $\zeta'(a)=1$ for $a>2\epb$, the composition
$\zeta(w_h)\in W^{1,2}\loc(\Qplus)$, with the weak derivatives
\[
\Dt\zeta(w_h)=\zeta'(w_h)\Dt w_h,\quad
\Dx\zeta(w_h)=\zeta'(w_h)\Dx w_h.
\]
We may now set $\psi(x,t)=\eta(x)(\zeta'\circ w_h)(x,t)$ in \qref{qq+}, where
$\eta$ is any $C^2$ function with compact support in $(0,1]$.
In what follows, we assume also that $\eta\ge0$ and $\eta'\ge0$ on $(0,1]$.
The function $t\mapsto \int_0^1 \eta(x)\zeta(w_h(x,t))\,dx$
is absolutely continuous for $t>0$, with
\begin{equation}\label{i.ez}
\Bigl.\int_0^1 \eta(x)\zeta(w_h(x,t))\,dx\Bigr|_{t=\sigma}^{t=\tau}
= \int_{Q_{[\sigma,\tau]}} \!\!\! \eta \zeta'(w_h)\Dt w_h %\dX
= -\int_{Q_{[\sigma,\tau]}}
\!\!\! (x^2\Dx w +\hat n w)_h
\,\Dx(\eta \zeta'(w_h)) %\dX,
\end{equation}
whenever $[\sigma,\tau]\subset(0,\infty)$.
\\

\noindent 3. (Take $h\to0$.)
As $h\to0$, the hypotheses for weak solutions imply that
$n$, $\bar n\in L^1\loc(Q_{[0,\infty)})$. By consequence,
in $L^1\loc([0,\infty))$ we have
\begin{equation}
\int_0^1 \eta(x)\zeta(w_h(x,\cdot))\,dx \to
\int_0^1 \eta(x)\zeta(w(x,\cdot))\,dx .
\end{equation}
In fact, we will show that the right-hand side here is absolutely
continuous for $t>0$, from
studying the terms on the right-hand side of \qref{i.ez}:
First, as $h\to0$, in $L^2\loc(\Qplus)$
we have
\begin{equation}
x^2\Dx w_h\to x^2\Dx w.
\end{equation}
And along a subsequence $h_j\to0$,
$\zeta'(w_h)\to \zeta'(w)$ and $\zeta''(w_h)\to \zeta''(w)$
boundedly a.e.\ on compact subsets of $\Qplus$.
Hence in $L^2\loc(\Qplus)$ we have
\[
\Dx(\eta\,
\zeta'(w_h))=
\eta' \zeta'(w_h) + \eta\,\zeta''(w_h)\Dx w_h
\quad\to\quad
\eta' \zeta'(w) + \eta\,\zeta''(w)\Dx w .
\]
Since $\zeta'(w)\Dx w=\Dx\zeta(w)$, we find
\begin{equation}\label{h.t1}
 \int_{Q_{[\sigma,\tau]}} (x^2\Dx w_h)\Dx(\eta\,\zeta'(w_h))\dX
\to
 \int_{Q_{[\sigma,\tau]}} \Bigl(
x^2\eta'\Dx\zeta(w)+ x^2\eta\,\zeta''(w)(\Dx w)^2
\Bigr)\dX.
\end{equation}

Next we deal with the nonlinear term in \qref{i.ez}. Observe
\begin{equation}
\label{h.t2}
\int_{Q_{[\sigma,\tau]}} (\hat n w)_h \Dx(\eta\,\zeta'(w_h))\dX
=
 \int_0^\tau
\eta\,\zeta'(w_h) (\hat n w)_h (1,t) \,dt
-\int_{Q_{[\sigma,\tau]}}
\eta\,\zeta'(w_h) (\Dx(\hat nw))_h
\dX.
\end{equation}
Since $n,\bar n\in \Bplus$ we have $\hat nw$, $\Dx(\hat nw)\in L^1\loc(\Qplus)$,
and it follows that $(\hat nw)_h(1,\cdot)\to \hat nw(1,\cdot)$ in
$L^1\loc((0,\infty))$.
Moreover, $w(1,\cdot)\in L^2\loc((0,\infty))$ and
$\zeta'(w_h(1,\cdot))\to\zeta'(w(1,\cdot))$
boundedly a.e. on compact subsets of $(0,\infty)$
along a sub-subsequence of $h\to0$.
Thus we may pass to the limit on the right-hand side of \qref{h.t2},
integrate back by parts, and infer that the limit is
\begin{equation}
\label{h.t2b}
 \int_\sigma^\tau
\eta\,\zeta'(w) (\hat n w) (1,t) \,dt
-\int_{Q_{[\sigma,\tau]}}
\eta\,\zeta'(w) \Dx(\hat nw)
\dX
=
\int_{Q_{[\sigma,\tau]}} (\hat n w) (\eta'\zeta'(w)+\eta\,\zeta''(w)\Dx w)\dX.
\end{equation}
One can justify this equality using an additional argument
approximating $w=n-\bar n$ via smooth functions in $\Bplus$:
Note the intermeditate term
$\int_{Q_{[\sigma,\tau]}} (\hat n w) \Dx(\eta\,\zeta'(w))\dX$
does not make sense with only the regularity assumed for $w$,
due to insufficient integrability in time ($L^1$ for one factor, $L^2$
for the other). The right-hand side of
\qref{h.t2b} does make sense, however, since $\zeta'(w)$
and $w\zeta''(w)$ are bounded on the support of the integrand.

In sum, we find that in $L^1\loc((0,\infty))$ and for
a.e.~$\tau>\sigma>0$,
\begin{eqnarray}\label{i.ezlim}
 \Bigl.\int_0^1 \eta(x)\zeta(w(x,t))\,dx\Bigr|_{t=\sigma}^{t=\tau}
&=& - \int_{Q_{[\sigma,\tau]}} \Bigl(
x^2\eta'\Dx\zeta(w)+ x^2\eta\,\zeta''(w)(\Dx w)^2
\Bigr)\dX
\\
&& \nonumber \quad
-\ \int_{Q_{[\sigma,\tau]}} (\hat n w)
(\eta'\zeta'(w)+\eta\,\zeta''(w)\Dx w)\dX.
\end{eqnarray}

\noindent 4. (Take $\epb\to0$.)
Note that since $\zeta(w)$, $\Dx\zeta(w)\in L^2\loc(\Qplus)$,
we have $\zeta(w(1,\cdot))\in L^2\loc((0,\infty))$ and
\[
-\int_{Q_{[\sigma,\tau]}} \!\!\!
x^2\eta'\Dx\zeta(w)\dX =
-\int_\sigma^\tau \eta'(1)\zeta(w(1,t))\,dt
+ \int_{Q_{[\sigma,\tau]}} \!\!\! \zeta(w)\Dx(x^2\eta')\dX
\le
 \int_{Q_{[\sigma,\tau]}} \!\!\! \zeta(w)\Dx(x^2\eta')\dX.
\]
Since $\eta,\eta'\ge0$, $\hat n\ge -2x$, and $w\zeta'(w)\ge0$,
therefore
\begin{equation}
\label{h.zero}
\Bigl.\int_0^1 \eta(x)\zeta(w(x,t))\,dx\Bigr|_{t=\sigma}^{t=\tau}
\leq \int_{Q_{[\sigma,\tau]}} \Bigl(
\zeta(w) \Dx(x^2 \eta')
+2x \eta' w \zeta'(w)
- \hat n w \eta \,\zeta''(w)(\Dx w)
\Bigr) \dX.
\end{equation}

Now we take the limit $\epb\downarrow0$,
for which we have
$\zeta\circ w\uparrow w_+$ and $w(\zeta'\circ w)\uparrow w_+$
pointwise.  Moreover, $w\zeta''\circ w = (w/\epb)\chi'(w/\epb)$
is bounded and converges to zero a.e.
Since $\eta \hat n\Dx w\in L^1(\Qplus)$, by dominated convergence
the last term in \qref{h.zero} tends to zero, and we derive
\begin{equation}\label{e:new1a}
\int_0^1 \eta\, w_+(x,\tau)\,dx \le
\int_0^1 \eta\, w_+(x,\sigma)\,dx +
\int_{Q_{[\sigma,\tau]}} (x^2 \eta''+4x\eta')w_+ \dX,
\end{equation}
for a.e.\ $\tau>\sigma>0$. Due to assumption \qref{con-} on weak
solutions, now we can take $\sigma\to0$ and conclude that this inequality holds
also with $\sigma=0$.
\\

\noindent 5. (Make Gronwall estimate.) Finally, we take $\eta$ of
the form $\eta(x)=x^p\chi_\eps(x)$,
where $p\ge0$ is the exponent for which we assume $x^pn\init\in L^1(0,1)$.
Observe that
\begin{equation}\label{d.etap}
x\eta'= x^p(p\chi+(x/\eps)\chi'),
\quad
x^2\eta'' = x^p (p(p-1)\chi+2p (x/\eps)\chi' + (x/\eps)^2 \chi''),
\end{equation}
where the arguments of $\chi$, $\chi'$ and $\chi''$ are $x/\eps$.
As $\eps\to0$, since we assume $x^p n$ and $x^p \bar n$ are in
$L^1(Q_{[0,T]})$ for any $T>0$,
we infer by monotone and dominated convergence that
\begin{equation}\label{e:new2a}
%\boxed{
\int_0^1 x^p w_+(x, \tau) \,dx  \leq \int_0^1 x^p w\init_+(x)\,dx
+c_p\int_0^\tau \int_0^1 x^p w_+(x,t)\,dx\,dt,
%}
\end{equation}
for a.e. $\tau>0$,  with $c_p=p(p-1)+4p=p(p+3)$.
{Denoting the (absolutely continuous)
right hand side of \qref{e:new2a} by $U(\tau)$, we have
$$
U(\tau)=U(0) +\int_0^\tau U'(s)\, ds \leq
U(0) +c_p \int_0^\tau U(s)ds,
$$
and Gronwall's inequality implies that
$$
U(\tau) \leq e^{c_p \tau} \int_0^1 x^p\,w\init_+(x)\,dx,
$$
for all $\tau>0$.  This proves \qref{wf}.
Clearly, $\nin\le\bnin$ implies $n\le\bar n$,
by virtue of \qref{wf}.
}

\subsection{Strict $L^1$-contraction}
From Corollary~\ref{c:uniq} it easily follows 
that  weak solutions of \qref{ne-} enjoy
the $L^1$ contraction property mentioned in \qref{e:l1c}.
For use in section~\ref{s:largetime} below, we strengthen this
to the following {\em strict} $L^1$ contraction property for 
$C^1$ solutions that {\em cross transversely}.

\begin{lem}\label{contraction}
Let $n, \bar n$ be nonnegative solutions to
(\ref{ne-})  with respect to initial data $\nin, \bnin$ that are in
$L^1(0, 1)\cap L^\infty(0, 1)$,  then for a.e.\ $t>0$,
\begin{equation}\label{con}
\|n(\cdot,t)-\bar n\|_{L^1(0, 1)} \leq \| \nin- \bnin \|_{L^1(0, 1)}.
\end{equation}
Moreover, assuming
the solutions %\footnote{check we have needed regularity?} 
$n$ and $\bar n$ are $C^1$ in $(0,1)\times [0,\infty)$,
and that for some $t_0\ge0$, $n(\cdot,t_0)$ and $\bar n(\cdot,t_0)$ cross transversely at least once on $(0, 1)$, then
for all $t>t_0$ we have
\begin{equation}\label{con+}
\|n(\cdot,t)-\bar n(\cdot,t)\|_{L^1(0, 1)} < \| n(\cdot,t_0)- \bar n(\cdot,t_0) \|_{L^1(0, 1)}.
\end{equation}
\end{lem}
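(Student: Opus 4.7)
The first inequality \eqref{con} is immediate from Theorem~\ref{t.uniq} applied with $p=0$: since $c_0=0$, that theorem gives $\int_0^1(n-\bar n)_+(x,t)\,dx\le\int_0^1(\nin-\bnin)_+(x)\,dx$, and by swapping $n$ and $\bar n$ the analogous bound for $(\bar n-n)_+$; adding these yields \eqref{con}.

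For the strict contraction \eqref{con+}, my plan is to rerun the derivation in the proof of Theorem~\ref{t.uniq} but \emph{retain} the nonnegative diffusive dissipation
\[
D_{\epb,\eps}:=\int_\sigma^\tau\int_0^1 x^2\chi_\eps\,\zeta''(w)(\Dx w)^2\,dx\,dt
\]
that was dropped in passing from \eqref{i.ezlim} to \eqref{h.zero}. Setting $w=n-\bar n$ and noting that $-w$ satisfies an equation of the same form as $w$ with the same $\hat n$, I would apply \eqref{i.ezlim} separately to $w$ and to $-w$ with the cutoff $\eta=\chi_\eps$, add the two inequalities, and send $\epb\downarrow 0$. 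Since $\zeta(w)+\zeta(-w)\to|w|$, the left-hand side converges to $\int_0^1\chi_\eps(x)\bigl(|w(x,\tau)|-|w(x,\sigma)|\bigr)\,dx$, while the estimates that produced the $(x^2\eta''+4x\eta')w_+$ bound in \eqref{e:new1a} (crucially using $\hat n\ge -2x$) apply equally to $w_+$ and $w_-=(-w)_+$; the only new contribution is the limit of $-(D_{\epb,\eps}+\tilde D_{\epb,\eps})$, where $\tilde D_{\epb,\eps}$ denotes the analogue for $-w$.

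By the $C^1$ hypothesis and the implicit function theorem, a transverse zero $x_0\in(0,1)$ of $w(\cdot,t_0)$ extends to a continuous curve $t\mapsto x_0(t)\in[a,b]\subset(0,1)$ of transverse zeros on $[t_0,t_0+\Delta]$ with $|\Dx w(x_0(t),t)|\ge c_0>0$. A change of variables $u=w(x,t)$ near each transverse zero, together with $\int_{\R}\zeta''(u)\,du=1$, yields
\[
\lim_{\epb\downarrow 0}\bigl(D_{\epb,\eps}+\tilde D_{\epb,\eps}\bigr)
=2\int_\sigma^\tau\sum_j\chi_\eps(x_j(t))\,x_j(t)^2|\Dx w(x_j(t),t)|\,dt,
\]
the sum running over all transverse zeros $x_j(t)\in(0,1)$. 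Letting $\eps\downarrow 0$ (the cutoff errors $\int(x^2\chi_\eps''+4x\chi_\eps')|w|\,dx\,dt$ vanish as in the proof of Theorem~\ref{t.uniq} because $|w|\in L^1$), one obtains for $[\sigma,\tau]\subset[t_0,t_0+\Delta]$
\[
\|w(\cdot,\tau)\|_{L^1(0,1)}-\|w(\cdot,\sigma)\|_{L^1(0,1)}\le -2\int_\sigma^\tau x_0(t)^2|\Dx w(x_0(t),t)|\,dt\le -2 a^2 c_0 (\tau-\sigma).
\]
Taking $\sigma\downarrow t_0$ and $\tau=t_0+\Delta$ gives the strict drop at $t=t_0+\Delta$; the nonexpansion \eqref{con} then transfers \eqref{con+} to all $t>t_0+\Delta$, and intermediate $t\in(t_0,t_0+\Delta]$ are handled by the same argument on $[t_0,t]$.

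The main obstacle is the rigorous identification of the $\epb\downarrow 0$ limit of the dissipation with the stated sum over transverse zeros: the change-of-variables calculation is elementary on any compact neighborhood of an isolated transverse zero, but care is required to make the convergence uniform in $t\in[t_0,t_0+\Delta]$ so as to interchange the limit with the time integral, and to verify that non-transverse zeros (where $\Dx w=0$) contribute nothing in the limit. Since only the single persistent transverse crossing $x_0(t)$ is needed for the strict loss, the analysis can be localized to a thin spacetime tube around $\{(x_0(t),t):t\in[t_0,t_0+\Delta]\}$, where the $C^1$ hypothesis provides uniform control of $w$ and $\Dx w$.
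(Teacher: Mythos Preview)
Your approach is correct and shares the paper's core strategy: rerun the proof of Theorem~\ref{t.uniq} while retaining the dissipation $\int x^2\eta\,\zeta''(w)(\Dx w)^2$, and show it stays bounded away from zero near a transverse crossing. The paper's handling of the key limit, however, is simpler and sidesteps the technical obstacles you flag. Rather than computing the exact $\epb\downarrow0$ limit of the dissipation via a change of variables (and worrying about uniformity in $t$ and non-transverse zeros), the paper fixes a rectangle $\Sigma_0=[x_0-\hat\delta,x_0+\hat\delta]\times[0,\delta]$ on which $\Dx w\ge c_1>0$ and $x^2\eta\ge c_2>0$, then simply bounds below:
\[
\int_{\Sigma_0\cap Q_{[\sigma,\tau]}} x^2\eta\,\zeta''(w)(\Dx w)^2
\ \ge\ c_1c_2\int_{\Sigma_0\cap Q_{[\sigma,\tau]}} \zeta''(w)\,\Dx w
\ =\ c_1c_2\int_\sigma^\tau \zeta'(w)\Big|_{x_0-\hat\delta}^{x_0+\hat\delta}\,dt
\ \to\ c_1c_2(\tau-\sigma),
\]
using only that $w(x_0\pm\hat\delta,t)$ has a definite sign uniformly in $t$, so $\zeta'(w)\to1,0$ at the two ends. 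No change of variables, no sum over crossings, no concern about other zeros. The paper also works only with $w_+$ rather than $|w|$: it obtains the strict drop for $\int w_+$ and then adds the \emph{non-strict} bound \eqref{wf} for $\int w_-$ from Theorem~\ref{t.uniq}, which halves the work compared to treating $w$ and $-w$ symmetrically.
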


\begin{proof} 
{The $L^1$-contraction estimate \qref{con} follows directly from Corollary~\ref{c:uniq} with $p=0$.
In order to prove (\ref{con+}), it suffices to treat the case $t_0=0$ for $t>0$ sufficiently
small, and assume the right-hand side is finite.  
 Let $w=n -\bar n $ and $\hat n = n+\bar n - 2x$. 
If  $n$ crosses $\bar n$ transversely at $(x_0, 0)$, then the regularity of the solution implies that there exists a nondegenerate rectangle $\Sigma_0= [x_0-\hat \delta, x_0+\hat \delta]\times [0, \delta]$ 
such that $w(x_0, 0)=0$ and $\Dx w \not=0$  in $\Sigma_0$.  We suppose $\Dx w(x_0, 0)>0$
(relabeling $n$ and $\bar n$ if necessary),  whence $ \Dx w \geq c_1>0$ in $\Sigma_0$, so
$w(x_0+\hat \delta, t)>c_1\hat \delta>0$ and $w(x_0-\hat \delta, t)<- c_1\hat \delta<0.$

We follow the proof of Theorem~\ref{t.uniq} up to (\ref{h.zero}), finding that
for $0<\sigma<\tau<\delta$,
\begin{align}
\label{h.zero+}
& \Bigl.\int_0^1 \eta(x)\zeta(w(x,t))\,dx\Bigr|_{t=\sigma}^{t=\tau}    %\leq  
\\\notag
&\qquad  \leq \int_{Q_{[\sigma,\tau]}} \Bigl(-x^2\eta \zeta''(w)(\partial_x w)^2 
 + \zeta(w) \Dx(x^2 \eta')
+2x \eta' w \zeta'(w)
- \hat n w \eta \,\zeta''(w)(\Dx w)
\Bigr) \dX.
\end{align}
Here we include a term $-x^2\eta \zeta''(w)(\partial_x w)^2$ from \qref{i.ezlim} that
was dropped in \qref{h.zero}.  
This identity is valid for any $C^2$ function $\eta\geq 0$ with 
compact support in $(0, 1]$ and with $\eta'\geq 0$. 
We may require $x^2\eta\geq c_2>0$
in $\Sigma_0$. Therefore, taking $\epb\downarrow0$, we find
\begin{align*}
 \int_{Q_{[\sigma,\tau]}} x^2\eta \zeta''(w)(\partial_x w)^2 &\geq 
\int_{\Sigma_0\cap Q_{[\sigma,\tau]} }  c_1c_2\zeta''(w)\partial_x w 
\\&
= \int_{\sigma}^{\tau} c_1c_2\zeta'(w)\Big|^{x_0+\hat \delta}_{x_0-\hat \delta}\, dt 
\to c_1c_2(\tau-\sigma)>0\,.
\end{align*}
When  taking  the limit $\epb\downarrow0$,we also have  
$\zeta\circ w\uparrow w_+$ and $w(\zeta'\circ w)\uparrow w_+$
pointwise.  Moreover, $w\zeta''\circ w = (w/\epb)\chi'(w/\epb)$
is bounded and converges to zero a.e.
Since $\eta \hat n\Dx w\in L^1(\Qplus)$, by dominated convergence
the last term in \qref{h.zero+} tends to zero, and we derive
\begin{equation}\label{e:new1aa}
\int_0^1 \eta\, w_+(x,\tau)\,dx \le
\int_0^1 \eta\, w_+(x,\sigma)\,dx +
\int_{Q_{[\sigma,\tau]}} (x^2 \eta''+4x\eta')w_+ \dX-c_1c_2(\tau-\sigma)\,.
\end{equation}
Finally, we take $\eta$ of the form $\eta(x)= \chi_\theta(x)$ with $\theta<x_0-\hat \delta$.  Observe that
\begin{equation}\label{d.etap+}
x\eta'= (x/\theta)\chi',
\quad
x^2\eta'' = (x/\theta)^2 \chi'',
\end{equation}
where the arguments of $\chi$, $\chi'$ and $\chi''$ are $x/\theta$.
As $\theta \to0$, since we assume $ n$ and $ \bar n$ are in
$L^1(Q)$, we infer by monotone and dominated convergence that
\begin{equation}\label{e:new2a+}
\int_0^1  w_+(x, \tau) \,dx  \leq \int_0^1  w_+(x,\sigma)\,dx-c_1c_2(\tau-\sigma) <
\int_0^1  w\init_+(x)\,dx\,,
\end{equation}
where the last inequality follows by applying Theorem~\ref{t.uniq} with $t=\sigma$
and $p=0$.  Adding this result together with \qref{wf} with $p=0$ and
  $n$  interchanged with $\bar n$, we obtain  (\ref{con+}).
}
\end{proof}

{\ifdraft
\vfil\pagebreak 
\fi}

\section{Existence of weak solutions}\label{s:exist}

The existence result in Theorem~\ref{t.exist}
is proved through three main approximation steps:
\begin{itemize}
\item[(i)] Approximate the rough initial data $\nin\in L^1(x^pdx)$ 
by smooth data $\nin_\kappa$ that is strictly positive and bounded. 
\item[(ii)] Truncate the problem \qref{ne-}  
to $x\in[\eps,1]$ with $\eps>0$, resulting in a strictly parabolic problem
at the cost of needing to impose an additional boundary condition at $x=\eps$. 
\item[(iii)] Further approximate by 
cutting off the nonlinearity in the flux near the boundary $x=1$,
resulting in a problem with linear boundary conditions. 
\end{itemize}
Passing to the limit in the various approximations involves
compactness arguments and uniform estimates that are based on
energy estimates and Gronwall inequalities.  Step (iii) is comparatively 
straightforward and its analysis is relegated to Appendix B. 
We deal with steps (i) and (ii) in the remainder of this section.

\subsection{Smoothing the initial data.}
Consider fixed initial data $n \init$ in $L^1(x^p dx)$.
We regularize the given initial data to obtain a family of  functions $n_\kappa\init$
for small $\kappa>0$, which are smooth on $[0,1]$ and  positive
on $(0,1]$, with the following properties:
\begin{align}
 \int_0^1 x^p|n_\kappa\init - \nin|\,dx\to0 &\quad\mbox{as $\kappa\to0$}\,,
\label{ap}\\
 \nin_\kappa(x) = \kappa x^2\ , &\qquad{0<x<\kappa}\,, 
\label{ap0}\\
 \nin_\kappa(x) = \frac{\kappa x^2}{\kappa x + 1}\,, &\qquad {1-\kappa<x<1}\,.
\label{ap1}
\end{align}
(The properties in \qref{ap0} and \qref{ap1} are conveniences so that we get 
compatible initial data in the approximation steps to follow below.)
The desired regularization can be achieved through mollification: 
Let  $\rho$ be a smooth, nonnegative function on $\R$ with
support contained in $(-1, 1)$ and total mass one.  Define
\begin{equation}\label{d:rhochi}
\rho_\kappa(x)= \kappa\inv \rho(x/\kappa), \qquad 
\chi(x)= \int_{-\infty}^x \rho(z)\,dz
\end{equation}
(note $\chi(x)=0$ for $x<-1$ and $\chi(x)=1$ for $x>1$), and require that
\[
x^p n_\kappa\init(x) =
\int_{2\kappa}^{1-2\kappa}  \rho_\kappa(x-y) \,y^p n\init(y)\,dy
+ \frac{\kappa x^{2+p}}{1+\kappa x\,\chi(4x-2)} \ .
\]
The integral
term vanishes when $x<\kappa$ or $x>1-\kappa$, and there is no singularity near $x=0$.

For this regularized initial data, our goal is to prove the following result. 
\begin{prop} \label{t:existn}
For every small enough $\kappa>0$, there exists a  weak solution $n_\kappa$ of \qref{ne-}
with initial data $\nin=\nin_\kappa$, having $n_\kappa\in C([0,\infty),L^1((0,1]))$.
\end{prop}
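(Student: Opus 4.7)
The plan is to construct $n_\kappa$ as the limit, as $\eps\downarrow 0$, of solutions $n_{\kappa,\eps}$ to a strictly parabolic approximation posed on $[\eps,1]$. On this interval the diffusion coefficient $x^2\ge\eps^2$ is bounded away from zero, so I would impose the Dirichlet condition $n_{\kappa,\eps}(\eps,t)=\nin_\kappa(\eps)=\kappa\eps^2$ at the inner endpoint (compatible with $\nin_\kappa$ by \qref{ap0}), retain the no-flux condition $J(1,t)=0$ at $x=1$, and take initial data $n_{\kappa,\eps}(x,0)=\nin_\kappa(x)$. Existence and regularity of $n_{\kappa,\eps}$ for this quasilinear problem with a nonlinear boundary condition at $x=1$ is handled by the additional approximation (cutting off the flux nonlinearity near $x=1$) described in Appendix~B; granting that step, the task reduces to deriving $\eps$-uniform estimates and passing to the limit.

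The key a priori bounds on $n_{\kappa,\eps}$ I would establish, uniformly in $\eps$, are: a pointwise upper bound $n_{\kappa,\eps}(x,t)\le x+(1-x)/t+2/\sqrt{t}$, obtained by verifying that this explicit function is a super-solution of the equation satisfying both boundary conditions and then applying a parabolic comparison principle; the one-sided Oleinik-type inequality $\Dx n_{\kappa,\eps}\ge -4/t$, obtained by differentiating the PDE and comparing $\Dx n_{\kappa,\eps}$ with the spatially constant sub-solution $-4/t$; and an energy estimate of the form \qref{en-}, obtained by testing the equation against $n_{\kappa,\eps}$ on $[\eps,1]\times[s,t]$. The no-flux condition annihilates the boundary term at $x=1$, while the boundary term at $x=\eps$ involves the Dirichlet value $\kappa\eps^2$ and vanishes in the limit $\eps\to 0$.

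From the upper bound together with the one-sided derivative bound, $n_{\kappa,\eps}(\cdot,t)$ is uniformly bounded in $BV\loc((0,1])$ for each $t>0$; combined with the $L^2\loc$ bound on $x\,\Dx n_{\kappa,\eps}$ coming from the energy estimate, and a distributional bound on $\Dt n_{\kappa,\eps}$ read off directly from the PDE, an Aubin--Lions argument extracts a subsequence converging strongly in $L^2\loc(Q)$ to some $n_\kappa$ with $n_\kappa,\Dx n_\kappa\in L^2\loc(Q)$. The main obstacle is passing to the limit in the nonlinear term $n_{\kappa,\eps}^2$ in the flux: this is precisely why strong (and not merely weak) $L^2\loc$ convergence is required, and is the reason for coupling the energy estimate with the $BV$-type bounds before invoking compactness. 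Once strong convergence is in hand, passage to the limit in the weak formulation \qref{we-} against test functions with compact support in $Q$ is routine, and the universal upper bound together with nonnegativity gives the required integrability \qref{c:icp}.

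Finally, the initial-data condition \qref{con-} follows from an $\eps$-independent Gronwall estimate for $\int_0^1 x^p|n_{\kappa,\eps}(x,t)-\nin_\kappa(x)|\,dx$ modeled directly on the proof of Theorem~\ref{t.uniq}, which shrinks to zero as $t\downarrow 0$ uniformly in $\eps$ because $\nin_\kappa$ is smooth and bounded. Applying the same estimate to the time-shifted difference $n_{\kappa,\eps}(\cdot,t+h)-n_{\kappa,\eps}(\cdot,t)$ yields equicontinuity in $L^1((0,1])$ with respect to $t$, uniform in $\eps$, and therefore $n_\kappa\in C([0,\infty);L^1((0,1]))$ upon passing to the limit.
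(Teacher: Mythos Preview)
Your overall strategy---truncate to $[\eps,1]$, establish the super-solution bound, Oleinik inequality, and energy estimate uniformly in $\eps$, then extract a limit---is the same as the paper's. But two specific choices you make diverge from the paper and create real difficulties.

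First, the paper imposes the Robin-type condition $\eps^2\Dx\nep-2\eps\nep=0$ at $x=\eps$ (equivalently $J_\eps=\nep^2$ there), not Dirichlet. This choice is decisive in the proof of the Oleinik inequality: writing $w=\Dx\nep$, the paper rules out a first zero of $w+4/t$ at $x=\eps$ by observing that the Robin condition gives $w(\eps,t)=(2/\eps)\nep(\eps,t)>0$ directly. With your Dirichlet condition you have no immediate control of $\Dx n$ at $x=\eps$, so the comparison of $w$ against the sub-solution $-4/t$ is not closed at that boundary. The energy estimate likewise acquires a boundary contribution $-n(\eps)J(\eps)=-\kappa\eps^2\,J(\eps)$ containing $\eps^2\Dx n(\eps)$; bounding it without the Oleinik inequality (or a substitute) is not straightforward. (The super-solution check at $x=\eps$ does survive under Dirichlet, since $S(\eps,t)>\eps>\kappa\eps^2$.)

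Second, your route to time-continuity and initial-data recovery does not quite work as written. The estimate of Theorem~\ref{t.uniq} compares two \emph{solutions}; comparing $n_{\kappa,\eps}(\cdot,t)$ with the fixed profile $\nin_\kappa$ (not a solution) does not fit that framework, and applying it to the time-shifted difference still requires a bound on $\|n_{\kappa,\eps}(\cdot,h)-\nin_\kappa\|_{L^1}$ as $h\downarrow0$, which is the very thing to be proved. The paper instead proves directly (Lemma~\ref{l:bvlem}) an explicit estimate $\int_\eps^1|\nep(x,t+h)-\nep(x,t)|\,dx\le K_2(t)\,h^{1/2}$ via a mollification argument that uses only the BV bound (from Oleinik) and the sup bound (from the super-solution). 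The \emph{shifted} versions of those bounds---$\nep\le S(\cdot,t+\tau_1)$ and $\Dx\nep\ge -4/(t+\tau_2)$, with $\tau_1,\tau_2>0$ depending only on $\nin_\kappa$---make $K_2(0)$ finite, which is what yields both the initial-data recovery \qref{con-} and $n_\kappa\in C([0,\infty);L^1)$. Together with the BV bound in $x$, this estimate gives uniform equicontinuity in the mean and hence convergence in $C([0,T];L^1([a,1]))$, replacing your Aubin--Lions step.
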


\subsection{Truncation}
To obtain $n_\kappa$, we regularize by truncating the domain away from the
origin, thus removing the degenerate parabolic nature of the problem.   
In other words, we will study classical solutions of the following problem
for small $\eps>0$:  
In terms of the (left-oriented) flux
\begin{equation}\label{d.Jep}
J_\eps=x^2\Dx\nep-2x\nep+\nep^2\,,
\end{equation}
we seek a solution to the problem
 \begin{subequations}\label{ne}
\begin{align}
 \Dt \nep  &= \Dx J_\eps \,, % x^2 \Dxx\nep +2\nep(\Dx\nep -1),
&\qquad x\in (\epsilon, 1), \ t\in(0,\infty)\,,
\label{e:nep1}
\\
 \nep &=n\init_\kappa\,, &\qquad x\in (\epsilon, 1)\,, \ t=0\,,
\label{e:nep2}
\\
0&=J_\eps\,, %\Dx\nep+\nep^2-2\nep,
&x=1\,,\ t\in[0,\infty)\,,
\label{e:nep3}
\\
0&= \eps^2\Dx\nep-2\eps\nep\,,
&x=\eps\,,\ t\in[0,\infty)\,.
\label{e:nep4}
\end{align}
\end{subequations}
The boundary condition \qref{e:nep4} says $J_\eps=\nep^2$ at $x=\eps$.
As will be seen in section~\ref{s:bec} below, this boundary condition is well-adapted
to proving the conservation identity for photon number in Theorem~\ref{t:cond}.  
An important point to note, however, is that
the uniqueness result of Theorem~\ref{t.uniq} 
shows that the solution of \qref{ne-} {\em does not depend}
on the choice of this boundary condition in \qref{e:nep4}.

For fixed small $\eps>0$, the following global existence result for
classical solutions of \qref{ne} is proved in Appendix B.
Note that due to \qref{ap0} and \qref{ap1}, 
the boundary conditions \qref{e:nep3}--\qref{e:nep4}
hold at $t=0$ whenever $0<\eps<\kappa$. 
\begin{prop}\label{t:regexist} 
Let $\nin_\kappa$ be smooth and positive on $(0,1]$ and satisfy
\qref{ap0}--\qref{ap1}.
Then for any sufficiently small $\eps>0$, 
there is a global classical solution $\nep$ of \qref{ne}, 
smooth in the domain
\begin{equation}\label{d:Qeps}
Q^\eps:=(\eps,1)\times(0,\infty)\,, 
\end{equation}
with $\nep$, $J_\eps$ and $\Dx\nep$ globally bounded and continuous on 
$\bar Q^\eps=[\eps,1]\times[0,\infty)$.
\end{prop}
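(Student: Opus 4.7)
My plan is to carry out step (iii) of the three-step approximation scheme outlined at the beginning of this section: introduce a smooth cutoff of the quadratic nonlinearity in the flux, apply classical parabolic theory to the resulting problem (which has Lipschitz boundary data), and remove the cutoff using uniform a priori bounds.

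Concretely, I would replace the quadratic term $\nep^2$ in the flux $J_\eps$ by $\nep\,\phi_M(\nep)$, where $\phi_M\in C^\infty([0,\infty))$ is nondecreasing, globally bounded, and coincides with the identity on $[0,M]$. The boundary condition \qref{e:nep3} then takes the Lipschitz Neumann-type form $\Dx n = n(2-\phi_M(n))$, while \qref{e:nep4} remains linear. On the bounded interval $[\eps,1]$ the truncated equation is uniformly parabolic, since $x^2\ge\eps^2$, with smooth coefficients; by \qref{ap0}--\qref{ap1}, the initial data $\nin_\kappa$ satisfies both boundary conditions to all orders at $t=0$, supplying the corner compatibility needed for higher regularity. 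Classical quasilinear parabolic theory (as in Ladyzhenskaya--Solonnikov--Uraltseva), or equivalently a contraction-mapping argument using Schauder estimates for the linearized problem, then furnishes a unique smooth local solution $\nep^M$, and a parabolic maximum principle gives $\nep^M\ge 0$.

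To globalize and remove the cutoff, I would establish an $L^\infty$ bound on $\nep^M$ independent of $M$ via a supersolution argument. A suitable adaptation of the universal supersolution $\bar n(x,t)= x + (1-x)/t + 2/\sqrt{t}$ from Theorem~\ref{t.exist}(i) is the natural candidate: a direct calculation verifies $\Dt\bar n - \Dx J_\eps[\bar n]\ge 0$ in the interior, and the boundary fluxes can be shown to have the correct sign to permit comparison. Choosing $M$ larger than $\sup\bar n$ on any space-time slab $[\eps,1]\times[\tau,T]$ renders the cutoff inactive there, so that $\nep^M$ actually solves \qref{ne} on that slab; iterating in time and letting $\tau\to 0$ produces a global smooth solution. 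Continuity of $\nep$, $J_\eps$ and $\Dx \nep$ on $\bar Q^\eps$ then follows from standard parabolic Schauder estimates applied on neighborhoods of the boundary and corners, together with a bootstrap argument for smoothness in the interior $Q^\eps$.

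The main obstacles I anticipate are twofold. First, reconciling the nonlinear boundary condition at $x=1$ with the classical quasilinear theory---nonlinear Neumann-type conditions typically require a separate fixed-point argument layered on top of the linear Schauder theory, rather than being handled directly by off-the-shelf results. Second, the supersolution from Theorem~\ref{t.exist}(i) must be adjusted (or carefully verified) so that it dominates the boundary flux at both $x=\eps$ and $x=1$ for the truncated problem, which requires a sign check that is not entirely automatic given the Robin condition at $x=\eps$. These detailed verifications are deferred to Appendix~B.
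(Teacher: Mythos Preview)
Your strategy is in the right spirit but differs in a key respect from the paper's, and the difference is precisely the obstacle you flag at the end.  The paper does \emph{not} cut off the nonlinearity globally in the value of $n$.  Instead it cuts off the quadratic flux term only in a spatial neighborhood of $x=1$: the modified flux is
\[
J_h = x^2\Dx n_h - 2xn_h + n_h^2 + (3n_h - n_h^2)\chi_h(x)\,,
\]
where $\chi_h(x)=0$ for $x<1-2h$ and $\chi_h(1)=1$.  The point is that at $x=1$ the condition $J_h=0$ becomes the \emph{linear} Robin condition $\Dx n_h = -n_h$, so off-the-shelf semigroup theory (the paper cites Lunardi, Prop.~7.3.6) applies directly, with no separate fixed-point layer for a nonlinear Neumann condition.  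Your global cutoff $n\phi_M(n)$ leaves the boundary condition nonlinear (Lipschitz, yes, but still nonlinear), which is exactly the complication you defer.

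A second difference: the paper does not invoke the universal supersolution $S(x,t)$ to bound the cutoff solution.  Since $S$ blows up as $t\to0$, it cannot furnish a bound uniform on $[0,T]$; your ``iterating in time and letting $\tau\to0$'' is not quite enough without a separate short-time bound.  The paper instead uses a simple constant supersolution $M_1\ge3$: because the cutoff introduces a zero-order term $(3n_h-n_h^2)\chi_h'$ with $\chi_h'\ge0$, this term is nonpositive once $n_h\ge3$, and the maximum principle gives $n_h<M_1$ directly from the initial data.  The limit $h\to0$ is then handled by energy estimates (exploiting that the PDE is unchanged for $x<1-2h$) and Arzel\`a--Ascoli, rather than by making the cutoff inactive.

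Your approach could likely be made to work, but the paper's local-in-$x$ cutoff is a cleaner device that sidesteps both obstacles you anticipated.
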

From this result, we will derive Proposition~\ref{t:existn} by taking $\eps\downarrow0$
after establishing a number of uniform bounds on the solution
$\nep$ of \qref{ne}.
The global bounds stated in Theorem~\ref{t.exist} will
follow directly from corresponding uniform bounds on $\nep$,
which are proved in Lemmas~\ref{l:uss} and \ref{l:ec}
and are inherited by $n_\kappa$.

\subsection{Uniform estimates for the truncation.}
The first few uniform estimates that we establish
on the solution $\nep$ of \qref{ne} are pointwise estimates
that arise from comparison principles.

\begin{lem} \label{0m}
We have $\nep(x, t)>0$ for every $(x, t)\in[\epsilon, 1]\times [0, \infty)$\,.
\end{lem}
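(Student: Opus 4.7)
The plan is to invoke the parabolic strong maximum principle together with Hopf's boundary point lemma. First, by expanding the flux $J_\eps$, I would rewrite the PDE in non-divergence form as
\begin{equation*}
\Dt\nep - x^2\Dxx\nep - 2\nep\Dx\nep + 2\nep = 0.
\end{equation*}
Along the classical solution produced by Proposition~\ref{t:regexist}, this is a linear uniformly parabolic equation on $\bar Q^\eps$ with bounded lower-order coefficients $b(x,t) = 2\nep$ (for $\Dx\nep$) and $c(x,t) = 2 \geq 0$ (for $\nep$); uniform parabolicity holds because $x^2 \geq \eps^2 > 0$ on $\bar Q^\eps$.

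I would then argue by contradiction. Set $t^* = \inf\{t \geq 0 : \nep(x,t) = 0 \text{ for some } x \in [\eps,1]\}$. The regularization properties \qref{ap0}--\qref{ap1} and the mollification formula defining $\nin_\kappa$ ensure $\nin_\kappa > 0$ on $(0,1]$, so for $0 < \eps < \kappa$ the initial trace is strictly positive on the compact set $[\eps,1]$, whence continuity forces $t^* > 0$. Assume for contradiction $t^* < \infty$. By continuity there exists $x^* \in [\eps,1]$ with $\nep(x^*,t^*) = 0$ and $\nep > 0$ on $[\eps,1] \times [0, t^*)$, so $\nep(\cdot, t^*) \geq 0$ and $(x^*, t^*)$ realizes a non-positive minimum on the parabolic cylinder $[\eps,1]\times[0,t^*]$.

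The proof then splits into three cases by the location of $x^*$. If $x^* \in (\eps, 1)$, the parabolic strong minimum principle (valid since $c \geq 0$ and the attained minimum is non-positive) forces $\nep \equiv 0$ throughout the cylinder up to time $t^*$, contradicting $\nin_\kappa > 0$. If $x^* = \eps$, Hopf's boundary point lemma gives $\Dx\nep(\eps, t^*) > 0$, whereas the boundary condition \qref{e:nep4} gives $\Dx\nep(\eps, t^*) = (2/\eps)\nep(\eps, t^*) = 0$, a contradiction. If $x^* = 1$, Hopf gives $\Dx\nep(1, t^*) < 0$, while \qref{e:nep3} gives $\Dx\nep(1, t^*) = \nep(1, t^*)\bigl(2 - \nep(1, t^*)\bigr) = 0$, again a contradiction. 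Hence $t^* = \infty$.

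The main subtlety is verifying the hypotheses of the strong maximum principle and Hopf lemma in this quasilinear setting with nonlinear boundary conditions. This is handled cleanly by freezing the lower-order coefficients along the known classical solution $\nep$, which by Proposition~\ref{t:regexist} is smooth with globally bounded $\Dx\nep$, so the standard linear parabolic theory applies; moreover, the strict positivity of $\nep$ for $t < t^*$ supplies the non-constancy hypothesis needed for Hopf at the boundary minima.
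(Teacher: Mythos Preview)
Your proposal is correct and follows essentially the same approach as the paper: argue by contradiction from the first time $t^*$ positivity fails, then rule out boundary minima via Hopf's lemma (the paper cites the strong maximum principle from Protter--Weinberger for this) and interior minima via the maximum principle. The only cosmetic difference is that for the interior case the paper gives a hands-on argument with the substitution $w=e^{3t}\nep$ rather than invoking the strong minimum principle directly, but this is just an elementary reproof of the same fact.
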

\begin{proof}  Recall $\min_{[\eps,1]} \nin_\kappa>0$. 
If we suppose the claim fails, then $0<t^*<\infty$, where
$$
t^*=\sup \{t \mid \nep(x,t)>0 \text{ for all $x\in [\epsilon, 1]$}\}\,.
$$
By continuity, there exists $X^*=(x^*, t^*)$ with $x^*\in [\eps, 1]$ such that
$\nep(X^*)=0$.  We claim first that $x^*\ne\eps$ or $1$.
If $x^*=\epsilon$ or $1$, by the
strong maximum principle \cite{PW84}, we must have $0\ne\Dx \nep(X^*)$,
but this violates the boundary conditions \qref{e:nep3}--\qref{e:nep4}.

Thus $\eps<x^*<1$, but this is also not possible due to rather standard
comparison arguments:  There exists $\delta>0$ such that
$\delta$ is less than the minimum of $\nep(\eps,t)$, $\nep(1,t)$
and $\nep(x,0)$ whenever $0\le t\le t^*$ and $x\in[\eps,1]$.
Setting $w=e^{3t}\nep$, we find that $w>\delta$ at $t=0$,
and there is some first time $\hat t\in(0,t^*)$
when $w(\hat X)=\delta$ for some $\hat X=(\hat x,\hat t)$
with $\hat x\in(\eps,1)$.
Then $\Dt w\le0$, $\Dx w=0$ and $\Dxx w\ge0$ at $\hat X$,
but computation then shows $\Dt w \ge w=\delta>0$.
This finishes the proof.
\end{proof}

Next we establish a {universal upper bound} on our solution
of \qref{ne}. We do this by establishing that the function defined by
\begin{equation}\label{d:S}
S(x,t) =x +\frac{1-x}{t} +\frac{2}{\sqrt t}
\end{equation}
is a universal super-solution.
This fact depends essentially on the hyperbolic nature of our
problem at large amplitude---Note that the middle term $(1-x)/t$
is a centered rarefaction wave solution of the equation $\Dt n-2n\Dx n=0$.

\begin{lem} \label{l:uss} 
We have
\[
 \nep(x, t) < S(x, t) \quad\mbox{ for all $(x,t)\in \bar Q^\eps$}\,.
\]
Furthemore, there exists $\tau_1>0$, depending only on $\sup \nin_\kappa$, such that
\[
 \nep(x, t) < S(x, t+\tau_1) \quad\mbox{for all $(x,t)\in \bar Q^\eps$}\,.
\]
\end{lem}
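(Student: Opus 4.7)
The natural strategy is to prove that a regularized version of $S$ is a strict supersolution of the PDE, and then use a maximum principle / first-contact argument to compare with $n_\eps$. Specifically, for small $a>0$ introduce the smooth function
\[
\tilde S_a(x,t) = x + \frac{1-x}{t+a} + \frac{2}{\sqrt{t+a}},
\]
which is defined and smooth on $[0,1]\times[0,\infty)$ and satisfies $\tilde S_a < S$. A direct computation, exactly analogous to the one motivating the form of $S$, gives
\[
\partial_t \tilde S_a - \partial_x J(\tilde S_a)
= \frac{1-x}{(t+a)^2} + \frac{3}{(t+a)^{3/2}} + \frac{2x}{t+a}
=: G(x,t) > 0.
\]
Choose $a\in(0,\kappa)$ so small that $2/\sqrt a > \sup n\init_\kappa$; then $\tilde S_a(x,0) > n\init_\kappa(x)$ on $[\eps,1]$. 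By Proposition~\ref{t:regexist}, $n_\eps$ and $\tilde S_a$ are continuous on $\bar Q^\eps$, so $w := n_\eps - \tilde S_a < 0$ on $[\eps,1]\times\{0\}$ and, by continuity, for all small $t>0$.

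\textbf{First contact analysis.} Suppose for contradiction that $t_* := \inf\{t>0 : \max_x w(\cdot,t)\ge 0\} < \infty$. Then $w(x_*,t_*) = 0$ at some $x_*\in[\eps,1]$, and $\partial_t w(x_*,t_*)\ge 0$. Using
\[
J(n_\eps) - J(\tilde S_a) = x^2\Dx w + (n_\eps + \tilde S_a - 2x)w,
\]
I rewrite the equation for $w$ as
\[
\partial_t w = -G + x^2\Dxx w + 2x\Dx w + (n_\eps+\tilde S_a-2x)\Dx w + \Dx(n_\eps+\tilde S_a)\, w.
\]
In the \emph{interior} case $x_*\in(\eps,1)$, one has $\Dx w = 0$, $\Dxx w \le 0$, and $w=0$, so this collapses to $0\le\Dt w \le -G < 0$, a contradiction. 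For the \emph{right boundary} case $x_*=1$, the no-flux condition at $x=1$ gives $\Dx n_\eps(1,t_*) = 2A - A^2$ with $A := n_\eps(1,t_*) = \tilde S_a(1,t_*) = 1 + 2/\sqrt{t_*+a}$, which simplifies to $1 - 4/(t_*+a)$. Combined with $\Dx \tilde S_a(1,t_*) = 1 - 1/(t_*+a)$, this yields $\Dx w(1,t_*) = -3/(t_*+a) < 0$, contradicting the one-sided max inequality $\Dx w(1,t_*)\ge 0$. For the \emph{left boundary} case $x_*=\eps$, the boundary condition \qref{e:nep4} yields $\Dx n_\eps(\eps,t_*) = 2\tilde S_a(\eps,t_*)/\eps$, which is strictly greater than $\Dx \tilde S_a(\eps,t_*) = 1 - 1/(t_*+a)$, so $\Dx w(\eps,t_*)>0$, contradicting $\Dx w(\eps,t_*)\le 0$. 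Hence $w<0$ on $\bar Q^\eps$, and since $\tilde S_a < S$ pointwise, the first claim $n_\eps < S$ follows.

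\textbf{Translated bound and principal obstacle.} For the second claim, the same argument applies verbatim with $\tilde S_a$ replaced by $S(\cdot,\cdot+\tau_1)$: the supersolution inequality is translation-invariant in time, and choosing $\tau_1$ with $2/\sqrt{\tau_1} > \sup n\init_\kappa$ (so $\tau_1 < 4/(\sup n\init_\kappa)^2$) ensures $S(x,\tau_1) > n\init_\kappa(x)$ on $[\eps,1]$, while the boundary computations at $x=1$ and $x=\eps$ go through unchanged. The main obstacle I anticipated was the right-boundary analysis, where the nonlinear condition $\Dx n_\eps = 2n_\eps - n_\eps^2$ could a priori conspire against the comparison; however, the specific constants $(1-x)/t$ and $2/\sqrt t$ chosen in $S$ are exactly tuned so that the boundary derivative difference at $x=1$ works out to the clean negative quantity $-3/(t_*+a)$, which is what makes the argument close.
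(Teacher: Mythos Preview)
Your proof is correct and follows essentially the same first-contact comparison argument as the paper. The only cosmetic difference is that you introduce the regularization $\tilde S_a(x,t)=S(x,t+a)$ to make the supersolution finite at $t=0$, whereas the paper works directly with $S$ and simply invokes that $S(x,t)\to\infty$ as $t\to0^+$ to ensure $S-n_\eps>0$ for small $t$; your regularized version is exactly the paper's $\tau_1$-shifted argument specialized to small shifts, so nothing is lost or gained. One tiny slip: in your displayed equation for $\partial_t w$ the zero-order coefficient should be $(\Dx(n_\eps+\tilde S_a)-2)$ rather than $\Dx(n_\eps+\tilde S_a)$, but since $w=0$ at the contact point this does not affect the conclusion.
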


\begin{proof} Let us write
$ L[n] :=\Dt n- x^2\Dxx n-2n(\Dx n-1).  $
 Then a simple calculation gives
 $$
 L[S]=\frac{1-x}{t^2} +\frac{2x}{t} +3t^{-3/2}>0\,.
 $$
 Hence with $v=S-\nep$, we have
\begin{equation}\label{i:LLnS}
 L[S]- L[\nep]= \Dt v -x^2 \Dxx v -\Dx((\nep+S)v)+2v>0\,.
\end{equation}
By continuity we have $\min_x v(x,t)>0$ for small $t>0$, and we claim that
this continues to hold for all $t>0$. If not, there is a first time
$\hat t$ when it fails, and some $\hat X=(\hat x,\hat t)$ with
$\hat x\in[\eps,1]$ where $v(\hat X)=0$.
By \qref{i:LLnS} it is impossible that $\hat x\in(\eps,1)$.  If $\hat x=\eps$,
then $v=0$ and $\Dx v\ge0$ at $\hat X$.  But due to the boundary condition
\qref{e:nep4} we find that at
$(x,t)=(\eps,\hat t)$,
\[
 0\le \eps \Dx v  =\eps \Dx S -2S
 = \eps\left(1-\frac1t\right)
-2\left(\eps + \frac{1-\eps}t+\frac2{\sqrt t}\right) <0\,.
\]
So $\hat x\ne\eps$.
 On the other hand, if $\hat x=1$, we would have
$v=0$ and $\Dx v\le0$ at $\hat X$. But then,
at $(x,t)=(1,\hat t)$
we find by \qref{e:nep3} that
\[
0\ge \Dx v  = \Dx S + S^2 -2S
=
1-\frac1t
+\left(1+\frac2{\sqrt t}\right)^2
-2\left(1+\frac2{\sqrt t}\right)
= \frac 3t > 0\,.
\]
Thus $\hat x\ne1$, and the result $S-\nep>0$ follows.  
Furthermore, if $\sup \nin_\kappa \le 2/\sqrt{\tau_1}$ then
$$
\min_x (S(x, \tau_1) -\nin_\kappa(x)) >\frac{2}{\sqrt{\tau_1}} 
-\|n\init_\kappa\|_{L^\infty} \ge0\,.
$$
Then the above procedure shows that $S(x, t+\tau_1)$ is also a super-solution. 
\end{proof}

The next result bounds $\Dx\nep$ from below. This is again a typical
kind of estimate for the hyperbolic equation $\Dt n-2n\Dx n=0$.
\begin{lem}\label{l:ec}(Oleinik-type inequality) 
We have $$
\Dx\nep(x,t) \geq -\frac{4}{t} \qquad\mbox{for all $(x,t)\in \bar Q^\eps$\,.} %\in (\epsilon, 1)$ and $t>0$.}
$$
Futhermore, there exists $\tau_2>0$, depending only on 
$\inf \Dx\nin_\kappa$, such that
\[
\Dx\nep(x,t) \geq -\frac{4}{t+\tau_2}
  \qquad\mbox{ for all $(x,t)\in \bar Q^\eps$\,.}
\]
\end{lem}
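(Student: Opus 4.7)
The plan is to prove both statements simultaneously by applying a parabolic maximum principle to $v := \Dx\nep$, using $\psi(t) = -4/(t+\tau)$ as a lower barrier, where $\tau = 0$ handles the first claim and $\tau = \tau_2 > 0$ handles the second.

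First, expanding $\Dt\nep = \Dx J_\eps$ with $J_\eps$ as in \qref{d.Jep} yields the equivalent non-divergence form $\Dt \nep = x^2 \Dxx \nep + 2\nep(\Dx\nep - 1)$.  Since $\nep$ is smooth in $Q^\eps$ by Proposition~\ref{t:regexist}, differentiating in $x$ gives
\[
\Dt v - x^2\Dxx v - 2(x+\nep)\,\Dx v \;=\; 2v^2 - 2v.
\]
Next, set $w = v - \psi$.  A direct computation using $\psi' = 4/(t+\tau)^2$ reveals that
\[
\Dt w - x^2\Dxx w - 2(x+\nep)\,\Dx w + \Bigl(\tfrac{16}{t+\tau}+2\Bigr) w - 2w^2 \;=\; \tfrac{28}{(t+\tau)^2} + \tfrac{8}{t+\tau} \;>\; 0.
\]
The strictly positive forcing on the right is the engine of the argument.

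Second, I verify $w \geq 0$ on the parabolic boundary.  At $x=\eps$, boundary condition \qref{e:nep4} gives $v(\eps,t) = 2\nep(\eps,t)/\eps > 0$ by Lemma~\ref{0m}, so $w(\eps,t) \geq -\psi(t) > 0$.  At $x = 1$, the no-flux condition \qref{e:nep3} gives $v(1,t) = \nep(1,t)(2 - \nep(1,t))$, and the universal super-solution bound from Lemma~\ref{l:uss} yields $\nep(1,t) \leq 1 + 2/\sqrt{t+\tau_1}$, whence $v(1,t) \geq 1 - 4/(t+\tau_1)$; thus $w(1,t) \geq 1 - 4/(t+\tau_1) + 4/(t+\tau) \geq 1$ provided $\tau \leq \tau_1$.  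For the initial data: when $\tau = 0$, the barrier $\psi$ blows down at $t \to 0^+$, so $w(x,t) \to +\infty$ uniformly, giving $w > 0$ on any slab $\{t = t_0\}$ for $t_0$ small.  When $\tau = \tau_2 > 0$, I choose $\tau_2 \leq \min\!\big(\tau_1,\,4/|(\inf_x \Dx \nin_\kappa)_-|\big)$ so that $w(x,0) = \Dx\nin_\kappa(x) + 4/\tau_2 \geq 0$.

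Third, I close the argument by contradiction.  Working on $[\eps,1]\times[t_0,T]$ for small $t_0 > 0$ and any $T$, suppose $w$ attains a nonpositive minimum at some $(x^*,t^*)$.  By the boundary checks above, $(x^*,t^*)$ must be an interior point (or lie on $\{t = T\}$) with $\eps < x^* < 1$, where $\Dt w \leq 0$, $\Dx w = 0$, $\Dxx w \geq 0$, and at $w \leq 0$ the terms $2w^2 \geq 0$ and $-(16/(t^*+\tau)+2)w \geq 0$.  Substitution into the PDE for $w$ yields $0 \geq \Dt w \geq 28/(t^*+\tau)^2 + 8/(t^*+\tau) > 0$, a contradiction.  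Hence $w > 0$ in $\bar Q^\eps$, which is the desired bound $\Dx \nep \geq -4/(t+\tau)$.  Letting $t_0 \to 0$ completes both parts.

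\textbf{Expected obstacle.}  The main delicate point is the boundary behavior at $x=1$: because the no-flux condition is quadratic in $\nep$, the quantity $v(1,t) = \nep(2-\nep)$ can be very negative when $\nep$ is large, and only the quantitative control $\nep(1,t) \leq 1 + 2/\sqrt{t+\tau_1}$ from the super-solution of Lemma~\ref{l:uss} is strong enough to give $v(1,t) \geq 1 - 4/(t+\tau_1)$, which in turn forces the restriction $\tau_2 \leq \tau_1$ in the refined second statement.
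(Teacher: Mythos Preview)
Your approach is essentially identical to the paper's: derive the equation for $v=\Dx\nep$, compare with the barrier $\psi=-4/(t+\tau)$, verify positivity of the difference on the parabolic boundary, and conclude by the maximum principle. The paper obtains the same forcing term $28/(t+\tau)^2+8/(t+\tau)$, written there as $\tfrac{4}{t^2}(7+2t)$ for $\tau=0$, and checks the boundaries in the same way.

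One small slip to repair: the step ``$\nep(1,t)\le 1+2/\sqrt{t+\tau_1}$, whence $v(1,t)\ge 1-4/(t+\tau_1)$'' is not valid as stated. Since $v(1,t)=\nep(2-\nep)=1-(\nep-1)^2$, an upper bound on $\nep$ alone does not control $(\nep-1)^2$ when $\nep$ is small; e.g.\ for $\nep(1,t)$ near $0$ and $t+\tau_1$ large one gets $v\approx 0<1-4/(t+\tau_1)$. The paper handles this by splitting into cases: if $0\le\nep(1,t)\le 2$ then $v(1,t)\ge0$, giving $w(1,t)\ge 4/(t+\tau)>0$; if $\nep(1,t)>2$ then your inequality \emph{does} hold and yields $w(1,t)\ge 1$ when $\tau\le\tau_1$. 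Either branch gives $w(1,t)>0$, so your conclusion survives once this case distinction is inserted. Your added restriction $\tau_2\le\tau_1$ is harmless for the downstream use of the lemma.
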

\begin{proof}
Let $w=\partial_x \nep$ with $\nep$ being the solution of (\ref{ne}).
Differentiation of (\ref{e:nep1}) shows that $w$ satisfies
 \begin{subequations}\label{w}
  \begin{align}
\Dt w & =x ^2 \Dxx w +2(\nep +x)\Dx w +2w(w-1)\,,
& (x,t)\in Q^\eps\,,\\
\eps^2 w(\epsilon, t) &=2 \eps\nep(\epsilon, t)\,, & t>0\,,
\\
 w(1, t)&=2\nep(1, t) -\nep(1, t)^2\,,
 & t>0\,.
\end{align}
\end{subequations}
We claim that $z=-4/t$ is a sub-solution of this problem.
Set $U=w-z=w+{4}/{t}$. A direct calculation gives
$$
\Dt U-x ^2 \Dxx U-2(\nep+x )\Dx U-2(w-4/t)U+2U=\frac{4}{t^2}(7+2t)>0\,.
$$
Note that $U(x, t)>0$ for $t>0$ small, because $\Dx \nep$ is continuous on $\bar Q^\eps$.
Then $U(x, t)>0$ for all $x$ and $t$ as long as it is so at $x=\epsilon$ and $x=1$.  At $x=\epsilon$, we have
$$
U(\epsilon, t)=w(\epsilon, t)+\frac{4}{t}=\frac{2}{\epsilon}\nep(\epsilon, t)+\frac{4}{t}>0\,.
$$
On the other hand, at $x=1$,  we have
\[
U(1, t) =w(1, t)+\frac{4}{t}
          =2\nep(1, t) -\nep(1, t)^2+\frac{4}{t}\,.
\]
If $0\le\nep(1,t)\le2$ then $U(1,t)\ge 4/t$, and otherwise,
$ 2< \nep(1, t)\leq S(1, t)=1 +2t^{-1/2}$,
hence
\[
U(1,t)\ge 2S(1,t)-S(1,t)^2+\frac4t=1\,.
\]
Therefore $U(1,t)>0$.  

  Provided $\tau_2>0$ is sufficiently small so that
$\Dx\nin_\kappa > -4/\tau_2$, we have
$$
\min_x\left\{ w(x, 0) +\frac{4}{\tau_2}\right\} >0\,,
$$
hence the above procedure shows that $w(x, t) \geq -{4}/({t+\tau_2})$ 
for all $(x, t)$ under consideration.
This concludes the proof.
\end{proof}

We now turn to obtain some compactness estimates that will be needed to
establish convergence  as $\eps\downarrow0$.
First we establish equicontinuity in the mean for solutions of (\ref{ne}).
\begin{lem}\label{l:bvlem} 
For each $t>0$, we have
\begin{equation}
\int_\epsilon^1|\Dx\nep|\,dx \leq K_1(t),
\qquad {K_1(t)=1+\frac2{\sqrt {t+\tau_1}}+\frac{8}{t+\tau_2}}\,,
\end{equation}
and for all $t>0$ and all small $h>0$,
\begin{equation}\label{e:halp}
\int_\epsilon^1|\nep(x, t+h)-\nep(x, t)|\,dx \leq K_2(t) h^{1/2}\,,
\end{equation}
where $K_2(t)$ is a decreasing function of $t$ {with $K_2(0)$ bounded} by a constant
depending only on $\sup \nin_\kappa$ and $\inf \Dx\nin_\kappa$.
\end{lem}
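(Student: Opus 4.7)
The plan is to establish the two bounds separately. For part (1), I would leverage the pointwise estimates from Lemmas~\ref{0m}--\ref{l:ec}; for part (2), I would use a duality-with-mollification argument driven by the same pointwise bounds.

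For the $BV$-in-$x$ bound, I would split $|\partial_x \nep| = \partial_x \nep + 2(\partial_x \nep)_-$ and integrate over $(\eps, 1)$. The first term telescopes to $\nep(1,t) - \nep(\eps,t) \leq S(1, t+\tau_1) = 1 + 2/\sqrt{t+\tau_1}$, combining Lemma~\ref{l:uss} with the positivity $\nep(\eps,t)>0$ from Lemma~\ref{0m}. For the second, the Oleinik-type bound of Lemma~\ref{l:ec} gives $(\partial_x \nep)_- \leq 4/(t+\tau_2)$, so its integral is at most $4/(t+\tau_2)$. Summing yields $K_1(t)$, uniformly in $\eps$.

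For the time equicontinuity estimate, I would first observe that the flux $J_\eps = x^2\partial_x\nep - 2x\nep + \nep^2$ satisfies $\int_\eps^1 |J_\eps(\cdot,s)|\, dx \leq K_3(s)$ for some decreasing $K_3$, combining the $BV$-bound from part~(1) (using $x^2\le 1$) with Lemma~\ref{l:uss} for the polynomial terms. For any $\phi \in C^1([\eps,1])$ with $\|\phi\|_\infty \leq 1$, integrating $\partial_t\nep = \partial_x J_\eps$ against $\phi$ over $(\eps,1)\times(t,t+h)$ and integrating by parts in $x$, using $J_\eps(1,s)=0$ and $J_\eps(\eps,s)=\nep(\eps,s)^2$ from \qref{e:nep3}--\qref{e:nep4}, yields
\[
\int_\eps^1 [\nep(\cdot,t+h)-\nep(\cdot,t)]\,\phi\,dx = -\int_t^{t+h}\!\!\nep(\eps,s)^2\phi(\eps)\,ds - \int_t^{t+h}\!\!\int_\eps^1 J_\eps\,\partial_x\phi\,dx\,ds,
\]
whose modulus is bounded by $Ch + h K_3(t)\|\partial_x\phi\|_\infty$, with $C$ absorbing the uniform bound on $\nep(\eps,\cdot)^2$ from Lemma~\ref{l:uss}. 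I would then take $\phi = \phi_\delta := \operatorname{sgn}(v) \ast \eta_\delta$ mollified at scale $\delta$ after extending $\operatorname{sgn}(v)$ by zero outside $(\eps,1)$, where $v := \nep(\cdot,t+h)-\nep(\cdot,t)$, so that $\|\phi_\delta\|_\infty \leq 1$ and $\|\partial_x\phi_\delta\|_\infty \leq C/\delta$. Self-adjointness of mollification gives
\[
\int_\eps^1|v|\,dx - \int_\eps^1 v\,\phi_\delta\,dx \leq \|v - v\ast\eta_\delta\|_{L^1} \leq C\delta\bigl(K_1(t)+C'\bigr),
\]
with $C'$ absorbing the jumps of the zero extension at $x=\eps,1$ (controlled by Lemma~\ref{l:uss}). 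Optimizing $\delta \sim \sqrt{h/K_1(t)}$ produces $\int_\eps^1|v|\,dx \leq K_2(t)h^{1/2}$, with $K_2(t)$ inheriting from $K_1(t)$ and $K_3(t)$ the required monotonicity in $t$ and dependence at $t=0$ on $\tau_1, \tau_2$, hence on $\sup\nin_\kappa$ and $\inf\partial_x\nin_\kappa$.

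The main obstacle is the boundary care required for the mollification: $\operatorname{sgn}(v)$ is defined only on $(\eps,1)$ and must be extended before convolving, the standard estimate $\|f - f\ast\eta_\delta\|_{L^1} \leq C\delta\,\mathrm{TV}(f)$ must account for the jumps of the zero extension at $x=\eps,1$, and the resulting $\phi_\delta$ no longer vanishes at the endpoints, forcing the boundary term $\nep(\eps,s)^2\phi_\delta(\eps)$ to be retained in the estimate. Otherwise the scheme is essentially routine given the pointwise bounds of Lemmas~\ref{0m}--\ref{l:ec}.
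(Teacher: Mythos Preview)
Your proposal is correct and follows essentially the same approach as the paper. For the $BV$ bound, the paper shifts $\nep$ by $4x/(t+\tau_2)$ to obtain a monotone function and then integrates, while you use the algebraic identity $|a|=a+2a_-$; these are equivalent manipulations of the Oleinik bound and the supersolution estimate, yielding the same $K_1(t)$. For the time-equicontinuity estimate, both you and the paper test the equation against a mollification of $\sgn v$ at scale $h^{1/2}$ and balance the resulting $h/\delta$ flux term against the $\delta\cdot\mathrm{TV}$ mollification error; the paper handles the endpoints by an explicit interior/boundary decomposition $I_h\cup\hat I_h$, whereas you absorb the endpoint contributions into the total variation of the zero extension, but this is only a cosmetic difference.
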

\begin{proof}  With $\tau_1$ and $\tau_2$ determined by the previous two lemmas, set
$$
u(x, t)=\nep(x, t) + \frac{4x}{t+\tau_2}\,,
$$
Then by Lemma~\ref{l:ec}, $u$ is a non-decreasing function of $x$, satisfying
$\Dx u>0$.
We have
\begin{align}\label{bv}
\int_\epsilon^1|\Dx\nep|\,dx & =\int_\epsilon^1
\left|\Dx u-\frac{4}{t+\tau_2}\right|dx
\leq \frac{4}{t+\tau_2} +\int_\epsilon^1 \Dx u \,dx \\ \notag
& \le \frac{8}{t+\tau_2}+ \nep(1, t) \le \frac{8}{t+\tau_2} +S(1,t+\tau_1) = K_1(t)\,.
\end{align}
This proves the first estimate of the lemma.

We next prove the bound \qref{e:halp}.
Fix any $t>0$ and consider $h>0$ small.  Suppressing the dependence
on $t$ and $h$, we set
$$
v(x)= \nep(x, t+h)-\nep(x, t)\,, \qquad x\in[\eps,1]\,,
$$
and observe that
\begin{equation}\label{i:vbd}
\|v\|_\infty \le 2\|S(\cdot,t+\tau_1)\|_\infty\,,
\qquad
\int_\eps^1|\Dx v|\,dx \le 2 K_1(t)\,.
\end{equation}
We proceed by approximating $|v(x)|$ by $\phi(x)v(x)$ where
$\phi$ is obtained by mollifying $\sgn v(x)$.
Let $\rho$ be a smooth, nonnegative function on $\R$ with
support contained in $(-1, 1)$ and total mass one,
and $\alpha>0$ be a parameter. (We will take $\alpha=\frac12$ below.)
We define $\rho_h(x)=h^{-\alpha}\rho(x/h^\alpha)$, and set
\begin{equation}\label{d:phi}
\phi(x)=\int_\eps^1 \rho_h (x-z) \sgn v(z) \,dz\,.
\end{equation}
To bound the integral of $|v(x)|$ over $[\eps,1]$,
we bound integrals over the sets
$$
I_h=[\eps+h^\ap, 1-h^\ap]\,, \qquad
 \hat I_h = [\eps,\eps+h^\ap]\cup[1-h^\ap,1]\,,
$$
writing
\begin{equation}\label{e:vdec}
\int_\eps^1|v(x)|\,dx =
\int_\eps^1 \phi(x)v(x)\,dx
+ \int_{I_h} \Bigl(|v(x)|-\phi(x)v(x)\Bigr)\,dx
+\int_{\hat I_h} \Bigl(|v(x)|-\phi(x)v(x)\Bigr)\,dx\,.
\end{equation}
Since $|\phi|\le1$,
the third term is bounded using the first estimate in \qref{i:vbd} as
\begin{equation}\label{i:term3}
\int_{\hat I_h} \bigl||v(x)|-\phi(x)v(x)\bigr|\,dx \le
8h^\ap \|S(\cdot,t)\|_\infty \,.
\end{equation}

We next estimate the middle term in \qref{e:vdec}.
For $x\in I_h$, we compute
\[
|v(x)|-\phi(x)v(x)
= \int_\R \rho_h(x-z) \Bigl(|v(x)|-v(x)\sgn v(z)\Bigr)\,dz\,.
\]
Noting that $||a|-a\sgn b|\le 2|a-b|$ for any real $a$, $b$, we have
\[
\bigl||v(x)|-v(x)\sgn v(z)\bigr| \le 2|v(x)-v(z)|\,.
\]
Integrating over $I_h$, we find
\begin{align*}
\int_{I_h}\bigl| |v(x)|-\phi(x)v(x) \bigr|  \,dx
&\le 2\int_{I_h}\int_{|y|<h^\ap} \rho_h(y) |v(x)-v(x-y)|\,dy\,dx
\\
&\le
2\int_{I_h}
\int_{|y|<h^\ap}
\rho_h(y)|y|
\int_0^1|\Dx v(x-ys)|\,ds
\,dy\,dx\,.
\end{align*}
Now we integrate first over $x$, note $x-ys\in[\eps,1]$ and use \qref{i:vbd},
and note that $|y|\le h^\ap$ and $\rho_h$ has unit integral.
We infer that
\begin{equation}
\label{i.vmid}
\int_{I_h}\bigl| |v(x)|-\phi(x)v(x) \bigr|  \,dx
\le 2h^\ap
\int_\eps^1|\Dx v|\,dx \le 4h^\ap K_1(t)\,.
\end{equation}

Finally, we bound the first term in \qref{e:vdec}.
Multiply equation (\ref{ne}a) by $\phi$ and integrate over
$(\epsilon, 1)\times (t, t+h)$. Integration by parts yields
\begin{align}
\label{aa}
\int_\epsilon^1 \phi v(x)\,dx
 %[\nep(x, t+h)-\nep(x, t)]\,dx
=\int_t^{t+h} \int_\epsilon^1
(\Dx\phi)
\left(
-x^2 \Dx \nep -
\nep^2+2 x \nep
\right)\,dx\,d\tau
-\int_t^{t+h} \phi(\eps)\nep^2(\epsilon, \tau) \,d\tau\,.
\end{align}
Note that $|\phi|\le1$ and $|\Dx\phi|\leq
h^{-\alpha}\|\rho'\|_1$
By virtue of $0\leq \nep\leq S$, we have
\begin{align*}
\int_\epsilon^1 \phi v\,dx & \leq
h^{-\alpha}\|\rho'\|_1
\int_t^{t+h}\int_\epsilon^1
( |\Dx\nep| + S^2+2S)
\,dx\,d\tau
+\int_t^{t+h}S(\epsilon, \tau)^2\,d\tau
\\
& \leq h^{1-\ap} \|\rho'\|_1(K_1(t)+3 \|S(\cdot,t+\tau_1)\|_\infty^2)
+ h \|S(\cdot,t+\tau_1)\|_\infty^2\,.
\end{align*}
Assembling all the bounds on the terms in \qref{e:vdec} above,
we obtain
\[
\int_\eps^1|v(x)|\,dx \le
\|S(\cdot,t+\tau_1)\|_\infty^2 (8h^\ap+h+3 h^{1-\ap}\|\rho'\|_1)
+K_1(t)( 4h^\ap+ h^{1-\ap}\|\rho'\|_1)\,.
\]
Choosing $\ap=\frac12$ and determining $K_2(t)$ to correspond,
the result in the lemma follows.
\end{proof}

Finally, we have the following energy estimate.
{
\begin{lem} \label{lem2.5} 
For any $t>s>0$,
\begin{equation}\label{en}
\int_\eps^1 \nep^2(x, t)\,dx +\int_s^t\int_\eps^1 [\nep^2 +x ^2(\Dx \nep)^2]\,dx\, d\tau
\leq \int_\eps^1 \nep^2(x, s)\,dx+ \frac{8}{3}(t-s)\,.
 \end{equation}
\end{lem}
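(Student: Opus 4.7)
\textbf{Proof plan for Lemma \ref{lem2.5}.} The natural approach is the standard $L^2$ energy method: multiply \eqref{e:nep1} by $\nep$, integrate over $(\eps,1)$, integrate by parts, and carefully track the boundary contributions generated by the convective flux $-2x\nep+\nep^2$. Since Proposition~\ref{t:regexist} guarantees smoothness of $\nep$ on $\bar Q^\eps$, all manipulations are justified classically.

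First I would use $\nep\Dt\nep=\tfrac12\Dt(\nep^2)$ to rewrite the time term, then integrate $\int_\eps^1 \nep\Dx J_\eps\,dx$ by parts, producing the boundary piece $[\nep J_\eps]_\eps^1$ and the interior piece $-\int_\eps^1 (\Dx\nep) J_\eps\,dx$. The boundary condition \qref{e:nep3} kills $J_\eps(1,t)$, while \qref{e:nep4} gives $J_\eps(\eps,t)=\nep^2(\eps,t)$, contributing the nonpositive term $-\nep^3(\eps,t)$ (using positivity from Lemma~\ref{0m}). Expanding $J_\eps$ in the interior piece yields three terms: the good dissipation $\int x^2(\Dx\nep)^2\,dx$, and two terms involving $\Dx\nep$ which I would integrate by parts once more: $\int_\eps^1 2x\nep\Dx\nep\,dx = \int_\eps^1 x\Dx(\nep^2)\,dx$ and $\int_\eps^1 \nep^2\Dx\nep\,dx = \tfrac13\int_\eps^1\Dx(\nep^3)\,dx$.

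Assembling everything gives a pointwise-in-$t$ identity of the form
\begin{equation*}
\tfrac12\frac{d}{dt}\int_\eps^1 \nep^2\,dx + \int_\eps^1 \bigl[\nep^2+x^2(\Dx\nep)^2\bigr]\,dx
= \bigl[\nep^2(1,t)-\tfrac13\nep^3(1,t)\bigr] - \tfrac23\nep^3(\eps,t) - \eps\nep^2(\eps,t).
\end{equation*}
The last two boundary terms at $x=\eps$ are nonpositive since $\nep\ge 0$, so they may be dropped. For the boundary contribution at $x=1$, elementary calculus shows $\max_{y\ge 0}(y^2-\tfrac13 y^3)=\tfrac43$, attained at $y=2$. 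Hence the right-hand side is bounded by $\tfrac43$.

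Integrating the resulting differential inequality from $s$ to $t$ and multiplying by $2$ yields
\begin{equation*}
\int_\eps^1 \nep^2(x,t)\,dx + 2\int_s^t\!\!\int_\eps^1 \bigl[\nep^2+x^2(\Dx\nep)^2\bigr]\,dx\,d\tau
\le \int_\eps^1 \nep^2(x,s)\,dx + \tfrac83(t-s),
\end{equation*}
which implies \qref{en} since the integrand is nonnegative. The only mild subtlety is ensuring the signs of all boundary contributions work out favorably; beyond that, the argument is a direct energy computation and I do not anticipate a serious obstacle.
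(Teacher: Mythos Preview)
Your proof is correct and follows essentially the same $L^2$ energy computation as the paper: multiply by $\nep$, integrate by parts, and bound the boundary term at $x=1$ by $\max_{y\ge0}(y^2-\tfrac13 y^3)=\tfrac43$ (the paper writes this as $\max_{u>0}(-\tfrac23 u^3+2u^2)=\tfrac83$ after absorbing the factor of $2$). Your version even records the slightly sharper inequality with the factor $2$ on the dissipation integral, which the paper's identity also yields but is not stated.
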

\begin{proof} From equation (\ref{e:nep1}) and the boundary conditions
\qref{e:nep3}--\qref{e:nep4} it follows
$$
\frac{d}{dt}\int_\epsilon^1 \nep^2\,dx=
- 2\int_\epsilon^1 (\Dx\nep) J_\eps\, dx- 2\nep^3(\epsilon, t)=
-2\int_\eps^1[\nep^2+x ^2(\Dx\nep)^2]dx +\Gamma(t)\,,
$$
with % [[check-ok]]
\begin{align*}
\Gamma(t) & =-\frac{2}{3}\nep^3(1, t)-\frac{4}{3}\nep^2(\eps, t)
+2 \nep^2(1, t) -2\epsilon \nep^2(\eps, t)
\leq \max_{u>0} \left(-\frac23 u^3+ 2u^2\right) = \frac 83\,.
\end{align*}
Hence, the claimed estimate follows by integration in time.
\end{proof}
}

\subsection{Proof of  Proposition \ref{t:existn}}

We now show a solution of  (\ref{ne-}) does exist 
for initial data $\nin_\kappa$ as prepared in subsection 4.1.
Let $n_\epsilon$ be our solution of (\ref{ne}), for small $\eps>0$.

Recalling the uniform estimates $0<n_\epsilon\leq S(x, t+\tau_1)$
from Lemmas~\ref{0m} and \ref{l:uss},  and using
Lemma~\ref{l:bvlem}, we see
the family$\{n_\epsilon\}$ is uniformly bounded and equicontinuous in the mean
on any compact subset of $(0, 1]\times[0, \infty)$. 
Consequently, we may extract a sequence 
$\epsilon_k \downarrow 0$ as $k\to \infty$, 
such that  for each $a\in(0,1)$ and $T>0$, 
$n_{\epsilon_k}$ converges to some function $n$,
boundedly almost everywhere in $[a,1]\times[0,\infty)$
and in $C([0,T]; L^1([a,1]))$, with
\begin{equation} \label{e:hcon}
\int_a^1|n(x, t+h)-n(x, t)|\,dx \leq Ch^{1/2}\,.
\end{equation}
Actually, this $C$ is independent of $a$, so (\ref{e:hcon}) holds also with $a=0$.
Moreover, due to  (\ref{en}) we can ensure that
$$
x  \partial_x n_{\epsilon_k} \to x \partial_x n \qquad \mbox{weakly in }\ L^2\loc(Q)\,.
$$

We claim that $n$ is a weak solution of (\ref{ne-}). 
Multiply (\ref{ne}a) by a smooth test function $\psi$ with
compact support in $(0,1]\times(0,\infty)$,
and integrate over $(\epsilon, 1)\times (0,
\infty)$ with integration by parts to obtain, for small enough $\eps$,
\begin{equation}\label{we}
\int_0^\infty \int_\epsilon^1\Bigl(\nep  \Dt\psi -
(x^2 \Dx \nep+ \nep^2-2x \nep)\D_x\psi\Bigr)\dX=0\,.
\end{equation}
Setting $\epsilon=\epsilon_k$ and letting $k\to
\infty$, we conclude that 
\begin{equation}\label{w0}
\int_0^\infty \int_0^1\Bigl(n \Dt\psi -(x^2 \partial_x n+n^2-2x
n)\D_x\psi\Bigr)\dX=0
\end{equation}
for all smooth test functions $\psi$.  
By completion, we infer
that (\ref{w0}) holds for all $\psi$ merely in $H^{1}$
with compact support in $Q$.
Hence $n$ is a weak solution as claimed.

Since there may exist at most one such solution of (\ref{ne-}), we conclude that the whole family $\{n_\epsilon\}$ converges to $n$, as $\epsilon \to 0$. 
This ends the proof of Proposition~\ref{t:existn}.
\hfill\(\Box\)

\medskip
\subsection{Proof of Theorem~\ref{t.exist}.}
Our next task is to complete the proof of Theorem \ref{t.exist} 
by studying the solutions $n_\kappa$ from Proposition~\ref{t:existn}
in the limit $\kappa\to0$. 

By the Gronwall inequality from  (\ref{wf}),  for any fixed $T>0$, and small $\kappa_1,\kappa_2>0$,
\begin{equation}\label{ww+}
\sup_{t\in[0, T]} \int_0^1x^p |(n_{\kappa_1}-n_{\kappa_2})(x,t)|\,dx \leq
e^{c_pt} \int_0^1 x^p |(n_{\kappa_1}\init- n_{\kappa_2}\init)(x)|\,dx\,.
\end{equation}
This with (\ref{ap}) implies that $n_\kappa$ is a Cauchy sequence in $C([0, T]; L^1(x^pdx))$, and therefore there is a function $n \in C([0, T]; L^1(x^pdx))$ such that  
$ \lim_{\kappa \to 0} n_\kappa=n$.  
From the local-in-time energy estimate (\ref{en}) we have  
\begin{equation*}
\int_0^1 n_\kappa^2(x, t)\,dx +\int_s^t\int_0^1 [n_\kappa^2 +x ^2(\Dx n_\kappa)^2]\,dx\, d\tau
\leq \int_0^1 n_\kappa^2(x, s)\,dx+ \frac{8}{3}(t-s)\,.
 \end{equation*}
 The right hand side, by virtue of  
$0\leq n_\kappa(x, t)\leq S(x, t)$,  
is bounded  for any $s>0$ by 
 $$
  \int_0^1S(x, s)^2dx + \frac{8}{3}(t-s)<\infty \,.
 $$

Taking the limit $\kappa\to0$, we deduce that $n$ and $\Dx n$ lie in 
$L^2_{\rm loc}(Q_{(0, T]})$, and the limit $n$ 
is non-negative and satisfies  (\ref{we-}).
This proves that $n$ is indeed a weak solution of (\ref{ne-}), as claimed.
Moreover, from Lemma \ref{l:uss}  it follows that the limit $n$  has the universal upper bound
that for every $t>0$, 
$$
n\leq S(x,t) =  x+\frac{1-x}{t}+\frac{2}{\sqrt{t}} \quad\mbox{for a.e. $x\in(0,1)$}\,,
$$
and Lemma \ref{l:ec} implies that for almost every $(x,t)\in Q_{(0,T]}$, its slope 
has the  one-sided bound %\footnote{How does this work? What does it mean??}
$$
\Dx n \geq -\frac{4}{t}\,.
$$ 
Finally, the limit function $n\in C((0,T],L^2)$, due to the following estimate.
With 
\[
\omega(h) = \sup_{s\le t\le s+h} \int_0^1 x^p |n(x,t)-n(x,s)|\,dx\,, 
\qquad C_s = \sup_{x\in[0,1]} S(x,s)\,,
\]
whenever $0<t-s<h$ is so small that $\alpha = \omega(h)^{1/(p+1)}<1$ we have 
\begin{align*}
\int_0^1|n(x,t)-n(x,s)|^2\,dx \le C_s^2\alpha + 
C_s\alpha^{-p}\int_\alpha^1 x^p |n(x,t)-n(x,s)|\,dx 
\le (C_s^2+C_s)\alpha\,.
\end{align*}
By consequence, the energy estimate \qref{en-} follows from the one for $n_\kappa$.
This finishes the proof of Theorem~\ref{t.exist}.
\hfill\(\Box\)

{\ifdraft
\vfil\pagebreak 
\fi}

\section{Finite time condensation}\label{s:bec}

The results of this section establish Theorem~\ref{t:cond}, demonstrating that loss of photons
is due to the generation of a nonzero flux at $x=0^+$, that such a flux persists if ever formed, 
and that photon loss does occur if the initial photon number exceeds the maximum attained
in steady state.

Throughout this section, we let $n$ be any global weak solution of \qref{ne-}.

\subsection{Formula for loss of photon number.}
First, we show how any possible decrease of photon number in time
is related to the nonvanishing of $n(0,t)^2=n(0^+,t)^2$, which is the formal
limit of the flux $J$ at the origin $x=0$.  
The following result implies part (i) of Theorem~\ref{t:cond}
in particular.
\begin{lem}\label{l:mass} For any fixed $t > s > 0$,  %\tr{[formerly $s\ge0$]}
\begin{equation}\label{mass+}
\int_{0^+}^1 n(x, t)\,dx = \int_{0^+}^1 n(x, s)\,dx
-\int_s^t n^2(0,\tau)\,d\tau\,.
\end{equation}
Moreover, for any $t>0$
\begin{equation}\label{decay}
\int_{0^+}^1 n(x, t)\,dx \leq \frac{1}{2} +\frac{1}{2t}+\frac{2}{\sqrt{t}}\,.
\end{equation}
\end{lem}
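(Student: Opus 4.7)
The decay bound \qref{decay} is immediate: integrating the universal upper bound $n(x,t)\le x+(1-x)/t+2/\sqrt{t}$ of Theorem~\ref{t.exist}(i) over $x\in[0,1]$ gives $\int_0^1 n(x,t)\,dx\le \tfrac12+\tfrac1{2t}+\tfrac{2}{\sqrt t}$. The main work is the photon-balance identity \qref{mass+}.

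To prove \qref{mass+}, I would insert into the weak formulation \qref{we-} a product-form test function $\psi(x,\tau)=\phi_\delta(x)\zeta_\rho(\tau)$, where $\phi_\delta\in C^\infty([0,1])$ is a spatial cutoff supported in $[\delta/2,1]$, equal to $1$ on $[\delta,1]$, with $\phi_\delta'\ge 0$ supported in $[\delta/2,\delta]$ and total integral $1$, while $\zeta_\rho\in C^\infty_c((0,\infty))$ smoothly approximates $\chi_{[s,t]}$. Passing $\rho\to 0$ first---using that $\tau\mapsto\int_0^1 n(x,\tau)\phi_\delta(x)\,dx$ is continuous on $(0,\infty)$ by virtue of the $L^2$-in-time continuity of $n$ supplied by Theorem~\ref{t.exist}(iii)---should yield
\begin{equation*}
\int_0^1 [n(x,t)-n(x,s)]\phi_\delta(x)\,dx = -\int_s^t\!\!\int_0^1 J(x,\tau)\,\phi_\delta'(x)\,dx\,d\tau.
\end{equation*}
Next I would send $\delta\downarrow 0$. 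The left side converges by dominated convergence to $\int_0^1[n(x,t)-n(x,s)]\,dx$ because $n\le S$ places $n(\cdot,\tau)$ in $L^\infty(0,1)$ for each $\tau>0$. For the right side, I would prove $\int_0^1 J(x,\tau)\phi_\delta'(x)\,dx\to n(0^+,\tau)^2$ for a.e.\ $\tau>0$ by splitting $J=x^2\partial_x n-2xn+n^2$: one integration by parts on the support $[\delta/2,\delta]$ (legitimate since $n,\partial_x n\in L^2_\loc(Q)$) rewrites $\int x^2\partial_x n\,\phi_\delta'\,dx$ as $-\int(2x\phi_\delta'+x^2\phi_\delta'')\,n\,dx$, which together with the $-2xn$ contribution is $O(\delta\|n(\cdot,\tau)\|_\infty)$ and hence vanishes, while the $n^2$ piece tends to $n(0^+,\tau)^2\int_0^1\phi_\delta'=n(0^+,\tau)^2$ because the right-limit $n(0^+,\tau)=\lim_{x\downarrow 0}n(x,\tau)$ exists. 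A final dominated convergence in $\tau\in[s,t]$, with dominating constant $C(s)$ coming from $n\le S$ and a BV bound on $n(\cdot,\tau)$ of the type provided by Lemma~\ref{l:bvlem}, then delivers \qref{mass+}.

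The main obstacle is the behavior near $x=0$: the Oleinik-type one-sided bound $\partial_x n\ge -4/\tau$ of Theorem~\ref{t.exist}(ii), combined with the universal upper bound, is essential for making $n(\cdot,\tau)+4x/\tau$ a bounded, non-decreasing function of $x$, which in turn ensures both the existence of the right-limit $n(0^+,\tau)$ and the $\tau$-uniform control required when passing $\delta\downarrow 0$. The remaining ingredients---smoothing of the cutoffs, the interior integration by parts, and the $L^\infty$ control inherited from Theorem~\ref{t.exist}(i)---are routine.
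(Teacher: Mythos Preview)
Your approach is correct and takes a genuinely different route from the paper's. The paper does not work with the weak formulation for $n$ directly; instead it returns to the classical approximants $n_\epsilon$ on $[\epsilon,1]$, integrates $\partial_t n_\epsilon=\partial_x J_\epsilon$ over $(x,1)\times(s,t)$, averages in $x$ over $(\epsilon,a)$, and controls the error terms via Cauchy--Schwarz and the energy estimate \qref{en-}, before passing $\epsilon\downarrow0$ and then $a\downarrow0$. Your argument bypasses the approximation entirely, relying on the Oleinik bound and the universal super-solution of Theorem~\ref{t.exist} to secure the existence of $n(0^+,\tau)$ and the $\tau$-uniform domination needed as $\delta\downarrow0$. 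This is cleaner in that it avoids the double limit and stays purely within the weak-solution framework; your reference to Lemma~\ref{l:bvlem} is slightly off (that lemma is stated for $n_\epsilon$), but you do not actually need it---the BV-type control you want already follows from Theorem~\ref{t.exist}(i)--(ii). The paper's route, by contrast, leans only on the energy estimate and not on the one-sided slope bound, which may transfer more readily to settings where an Oleinik inequality is unavailable.
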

\begin{proof}
Integration of equation (\ref{ne}a) over $(x, 1) \times (s, t)$, using $J_\eps(1, t)=0$, gives
$$
\int_x^1 n_\epsilon(y, \tau)\,dy \Big|_{\tau=s}^{\tau=t}  = - \int_s^t(x^2\partial_x n_\epsilon(x, \tau) +n_\epsilon^2(x, \tau)-2xn_\epsilon(x,\tau) ) \,d\tau\,.
$$
Taking an average in $x$ over $(\epsilon, a)$, we find
\begin{equation} \label{i:limbd}
\avint_\epsilon^a\int_x^1 n_\epsilon(y,\tau) \,dy\,dx\Big|_s^t 
+\int_s^t \avint_\epsilon^a n_\epsilon^2\,dx\, d\tau
= 
\int_s^t \avint_{\epsilon}^a (2xn_\epsilon-x^2 \partial_x n_\epsilon)\,dx\, d\tau\,.
\end{equation}
For the first term on the left-hand side, integrating on $[x,1]=[x,a]\cup[a,1]$ we note
\[
\avint_\epsilon^a\int_x^1 n_\epsilon(y,\tau) \,dy\,dx
= \int_a^1 n_\eps(y,\tau)\,dy + R
\]
where, because $\eps\le x$,
\begin{align*}
R = \avint_\eps^a \int_x^a n_\eps(y,\tau)\,dy &\le 
    \left(\avint_\epsilon^a \int_\eps^a n_\epsilon(y,\tau)^2 \,dy\,dx\right)^{1/2} \left(\avint_\epsilon^a \int_\eps^a 1 \,dy\,dx\right)^{1/2} \\
   & \leq \left(\int_\epsilon^a n_\eps^2\,dy\right)^{1/2} \left({a-\epsilon}\right)^{1/2}\\
   & \leq C_s a^{1/2}\,,
\end{align*}
due to the energy estimate in Theorem~\ref{t:regeg}. %Lemma \ref{lem2.5}.
The right-hand side in \qref{i:limbd} is bounded above by
\begin{align*}
 &\left( \int_s^t \avint_\epsilon^a (2n_\epsilon-x\partial_x n_\epsilon)^2 \,dx \,d\tau \right)^{1/2}
   \left(\int_s^t \avint_\epsilon^a x^2 \,dx\, d\tau \right)^{1/2} \\
   & \leq  \left(\int_s^t \avint_\epsilon^a 8n_\eps^2+2x^2(\partial_x n_\epsilon)^2 \,dx \,d\tau \right)^{1/2} \left((t-s)a^2\right)^{1/2}\\
   & \leq  
\left(\int_s^t \int_\epsilon^1 n_\eps^2+x^2(\partial_x n_\epsilon )^2 \,dx \,d\tau \right)^{1/2} 
\left(\frac{8(t-s)a^2}{a-\eps}\right)^{1/2}
\\
   & \leq C_{t,s} \left( \frac{a^2}{a-\epsilon}\right)^{1/2}\,.
\end{align*}
Passing to the limit $\epsilon \downarrow 0$ first, we have
$$
\int_a^1 n(x, \tau)\,dx\Big|_s^t +\int_s^t \avint_0^a n^2(x, \tau)\,dx\,d\tau = O(1)a^{1/2}\,.
$$
The desired equality follows from further taking $a\downarrow 0$.  Moreover, by virtue of $n\leq S$,  we have 
$$
\int_0^1 n(x, t)\,dx \leq \int_0^1 S(x, t)\,dx=\frac{1}{2} +\frac{1}{2t}+2 t^{-1/2},
$$
for any $t>0$. The proof is complete.
\end{proof}

Because $n$ is a classical solution of $\Dt n=\Dx J$ for $x,t>0$, 
by integration over $x\in[a,1]$, $\tau\in[s,t]$
we can infer that the loss term in \eqref{mass+} 
arises from the exit flux from the interval $[a,1]$
in the limit $a\to0$.
Thus the following result provides a precise sense in which
the flux $J(a,t)$ converges to $n^2(0,t)$ as $a\to0$.
\begin{cor}
Whenever $t>s>0$ we have 
\begin{equation}
\lim_{a\to 0^+} \int_s^t J(a,\tau)\,d\tau = \int_s^t n^2(0,\tau)\,d\tau.
\end{equation}
\end{cor}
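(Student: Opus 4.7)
The plan is to derive the identity from a direct integration of the classical equation $\Dt n=\Dx J$ on the rectangle $[a,1]\times[s,t]$, and then invoke Lemma~\ref{l:mass} to identify the limit.

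Concretely, first I would fix $a\in(0,1)$ and $t>s>0$, and use Theorem~\ref{t:regeg} to note that $n$ is a classical solution of $\Dt n=\Dx J$ on the interior of $Q$, so $J$ is continuous in $x$ and $\tau$ for $(x,\tau)\in(0,1]\times(0,\infty)$. Integrating $\Dt n=\Dx J$ over $[a,1]\times[s,t]$ and using the no-flux condition $J(1,\tau)=0$, I obtain
\begin{equation*}
\int_a^1 n(x,t)\,dx - \int_a^1 n(x,s)\,dx = -\int_s^t J(a,\tau)\,d\tau,
\end{equation*}
or equivalently
\begin{equation*}
\int_s^t J(a,\tau)\,d\tau = \int_a^1 n(x,s)\,dx - \int_a^1 n(x,t)\,dx.
\end{equation*}

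Next I would let $a\downarrow 0^+$ on the right-hand side. Because $0\le n(x,\tau)\le S(x,\tau)$ by Theorem~\ref{t.exist}(i), and $S(\cdot,\tau)$ is bounded for each fixed $\tau>0$ (hence integrable on $(0,1)$), the dominated convergence theorem gives
\begin{equation*}
\lim_{a\to 0^+}\int_a^1 n(x,\tau)\,dx = \int_0^1 n(x,\tau)\,dx
\qquad\text{for $\tau=s$ and $\tau=t$.}
\end{equation*}
Therefore the limit $\lim_{a\to 0^+}\int_s^t J(a,\tau)\,d\tau$ exists and equals $\int_0^1 n(x,s)\,dx-\int_0^1 n(x,t)\,dx$.

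Finally, by Lemma~\ref{l:mass} this last quantity equals $\int_s^t n^2(0,\tau)\,d\tau$, completing the proof. There is no real obstacle here: the interior regularity of $n$ from Theorem~\ref{t:regeg} justifies the classical integration that produces the flux identity, the universal super-solution bound supplies the integrable majorant needed for dominated convergence, and Lemma~\ref{l:mass} provides the final identification of the limit.
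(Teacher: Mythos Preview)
Your proof is correct and follows essentially the same approach as the paper: the paper states the corollary as an immediate consequence of integrating the classical equation $\Dt n=\Dx J$ over $[a,1]\times[s,t]$ and comparing with Lemma~\ref{l:mass}, which is exactly what you do. Your write-up makes explicit the justification (interior regularity from Theorem~\ref{t:regeg}, the super-solution bound for passing to the limit) that the paper leaves implicit.
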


\subsection{Persistence of condensate growth.}
Next we prove part (ii) of Theorem~\ref{t:cond}, 
showing that $n(0, t)$ once positive will remain positive for all time. More precisely:
\begin{lem}\label{shock} 
If $n(0, t^*)>0$ for some $t^*>0$, then
$n(0, t)\geq b(t)\hat x$ for  all $t>t^*$,
where
$$
b(t)= \left((1+t^*/4) e^{2(t-t^*)}-1 \right)^{-1},
\quad 
\hat x =\min\left\{ \frac{t^*n(0, t^*)}{4}, 1\right\}\,. 
$$
 \end{lem}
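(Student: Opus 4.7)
The plan is to build an explicit weak subsolution whose value at $x=0$ realizes exactly the claimed lower bound, and then to compare it with $n$ by an argument parallel to the proof of Theorem~\ref{t.uniq}.

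I will take as subsolution
\[
\underline n(x,t) = b(t)\,(\hat x - x)_+\,,\qquad t\ge t^*,\ x\in[0,1]\,.
\]
The ODE $b' = -2b(1+b)$ with $b(t^*)=4/t^*$ integrates exactly to the formula of $b(t)$ in the statement. A direct computation shows that, on the smooth region $\{0<x<\hat x\}$, the function $\underline n$ satisfies $\partial_t \underline n - \partial_x J_{\underline n}=0$ with $J_{\underline n}=x^2\partial_x\underline n+\underline n^2-2x\underline n$, the choice $b' = -2b(1+b)$ being precisely what makes the calculation cancel. On $\{x>\hat x\}$ everything vanishes, and at the corner $x=\hat x$ the derivative $\partial_x\underline n$ jumps up from $-b$ to $0$, so $\partial_{xx}\underline n$ contains a positive Dirac mass $b\delta_{\hat x}$ and hence $\partial_x J_{\underline n}$ receives a positive distributional contribution $\hat x^2 b\,\delta_{\hat x}$; while $\partial_t\underline n$ stays regular and vanishes at $\hat x$. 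Thus $\underline n$ is a weak subsolution, namely
\[
\int_Q\bigl(\underline n\,\partial_t\psi - J_{\underline n}\,\partial_x\psi\bigr)\,dX \ge 0
\]
for every nonnegative $\psi\in C^1_c(Q)$.

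Next I will check the initial ordering $\underline n(\cdot,t^*)\le n(\cdot,t^*)$. The Oleinik-type bound from Theorem~\ref{t.exist}(ii) gives $n(x,t^*)\ge n(0,t^*)-4x/t^*$ for $x\in[0,1]$. Combined with the definition of $\hat x$, which ensures $4\hat x/t^*\le n(0,t^*)$, this yields $n(x,t^*)\ge (4/t^*)(\hat x-x)=b(t^*)(\hat x-x)=\underline n(x,t^*)$ on $[0,\hat x]$, and trivially $n\ge0=\underline n$ on $[\hat x,1]$.

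The comparison step propagates this ordering. Subtracting the weak subsolution inequality for $\underline n$ from the weak-solution identity for $n$, the difference $w=\underline n-n$ satisfies
\[
\int_Q\bigl(w\,\partial_t\psi - (x^2\partial_x w + \tilde n\, w)\partial_x\psi\bigr)\,dX \ge 0 \qquad \text{for every }\psi\ge0\text{ in }C^1_c(Q),
\]
where $\tilde n = \underline n+n-2x\ge -2x$. Now I will repeat verbatim the Steklov-average/approximation chain used in the proof of Theorem~\ref{t.uniq}, with test functions approximating $x^p\,\mathbbm{1}_{[t^*,\tau]}(t)\,H(w)$ (taking $p=0$); the direction of the inequality is preserved at each step because the Heaviside cutoff is itself nonnegative. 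The resulting Gronwall estimate, together with $w_+(\cdot,t^*)=0$ from Step~2, forces $w_+(\cdot,t)\equiv0$ for all $t>t^*$, i.e.\ $\underline n\le n$ in $Q_{(t^*,\infty)}$. Taking $x\downarrow 0$ gives $n(0,t)\ge \underline n(0,t)=b(t)\hat x$, as required.

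The main obstacle is the comparison of Step~3 in the boundary case $\hat x = 1$, where $\underline n(1,t)=0$ but $J_{\underline n}(1,t)=-b(t)\ne 0$, so $\underline n$ does not respect the no-flux condition at $x=1$. I will handle this by replacing $\hat x$ by $(1-\alpha)\hat x$ for small $\alpha>0$, so that $\underline n_\alpha$ has compact support strictly inside $[0,1)$ and the boundary terms in the weak comparison vanish identically; the estimate $n(0,t)\ge b(t)(1-\alpha)\hat x$ then follows, and letting $\alpha\downarrow0$ yields the lemma. Verifying that the kink does not spoil the Steklov-averaged integration by parts is straightforward because its contribution to $\partial_xJ_{\underline n}$ is a positive Radon measure, hence has the correct sign against $H(w)\ge0$.
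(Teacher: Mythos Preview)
Your proposal is correct, and it uses the same subsolution as the paper: both arguments construct
\[
\hat n(x,t)=b(t)(\hat x-x)_+,\qquad b'=-2b(1+b),\quad b(t^*)=4/t^*,
\]
and both verify the initial ordering $n(\cdot,t^*)\ge \hat n(\cdot,t^*)$ from the Oleinik inequality in Theorem~\ref{t.exist}(ii).

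Where you diverge is in how the ordering is propagated for $t>t^*$. The paper restricts to the strip $(0,\hat x]\times[t^*,\infty)$, where $\hat n$ is smooth and satisfies the PDE exactly, and runs a classical pointwise comparison with the barrier $\Psi=-t+\log x$: one checks $\hat L[\Psi]<0$, so $v=n-\hat n-\eps\Psi$ satisfies $\hat L[v]>0$, cannot attain a first interior zero, is positive at $x=\hat x$ since $n\ge0$, and is positive near $x=0$ because $\Psi\to-\infty$ there. Letting $\eps\downarrow0$ gives $n\ge\hat n$. This relies on the interior regularity of Theorem~\ref{t:regeg}.

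Your route instead treats $\underline n$ as a weak subsolution on all of $(0,1]$ and recycles the weighted $L^1$ machinery from the proof of Theorem~\ref{t.uniq} (with $p=0$, so $c_p=0$). This is a legitimate alternative: the test function $\eta\,\zeta'(w_h)$ is nonnegative, so every step in that proof survives when the starting identity \eqref{w0+} is weakened to an inequality for nonnegative $\psi$, and the Gronwall step then forces $(\underline n-n)_+\equiv0$. Your approach avoids the barrier construction and does not require the classical regularity of $n$; only the bounds of Theorem~\ref{t.exist} are used. One simplification: the shrinking $\hat x\mapsto(1-\alpha)\hat x$ in the case $\hat x=1$ is not actually needed, because $J_{\underline n}(1,t)=-b(t)\le0$, and this sign makes the boundary term at $x=1$ contribute nonnegatively to the weak subsolution inequality. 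But the shrinking is harmless.
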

\begin{proof}  From $\D_xn(x, t^*)\geq -4/t^*$ we have
$$
n(x, t^*)\geq \left( n(0, t^*)- \frac{4x}{t^*} \right)_+ \ge \frac4{t^*}(\hat x-x)_+
=b(t^*)(\hat x-x)_+\,.
$$
Hence $
n(x, t^*)\geq \hat  n(x, t^*)
$
with
$$
\hat n(x,t)=b(t)(\hat x-x)_+\,.
$$
Note that $\hat n(1, t)=0$, and  for $0<x<\hat x$ we have
\[
 L[\hat n] :=\Dt \hat n- x^2\Dxx \hat n+2\hat n-2\hat n\Dx \hat n = (\hat x-x) (b'(t)+2b(t)+2b(t)^2) =0\,.
\]  
We claim   $n\ge\hat n$  on $(0, \hat x]$ for $t>t^*$.  Let $\eps>0$.
Substitution of
$
n=\hat n +v+\epsilon \Psi
$
into the equation $L[n]=0$ gives
$$
\hat L[v]:= \Dt v-x^2\Dxx v+2v -2v\Dx \hat n -2n\Dx v = -\eps \hat L[\Psi]\,. 
$$
Choosing
$
\Psi=-t+\log x,
$
we have $\Psi<0$ and
$$
\hat L[\Psi] = 2\Psi +2\Psi b - \frac {2n}x <0\,,
$$
hence $\hat L[v]>0$ for $0<x<\hat x$, $t\ge t^*$.
For  $0<x\le\sigma$ for $\sigma$ sufficiently small (depending on $\eps$),
$$
v =n-\hat n -\epsilon \Psi\ge -\hat n+ \epsilon (t-\log \sigma)>0\,, \quad\forall t>t^*\,.
$$
Moreover, at $x=\hat x$ we have
$$
v(\hat x, t)=n(\hat x, t)+\epsilon (t - \log \hat x) >0\,.
$$
 These facts, together with the fact $v(x, t^*)\geq \epsilon (t -\log x)> 0$, ensure that
 $$
 v(x, t) > 0\,, \quad \forall t>t^*, \ x\in (0,\hat x]\,.
 $$
Since $\eps>0$ is arbitrary, we infer  % first followed by $\sigma\to 0$ we have
 $$
 n(x, t)\geq \hat n(x,t)\,, \quad  \forall t \geq t^*, \ x\in(0, 1]\,.
 $$
This gives the desired estimate upon  taking $x\to0$.
\end{proof}

\subsection{Formation of condensates.}
The next result shows that photon loss will occur---meaning a condensate will form---in finite
time if the initial photon number $N[\nin] >\frac12$. This proves part (iii) of Theorem~\ref{t:cond}.  Note that $\frac12=N[x]$
is the maximum photon number for any steady state. %[$n_\mu$, defined where??].

\begin{prop} 
If %the initial data satisfies
$
N[\nin]>\frac{1}{2}\,,
$
then photon loss begins in finite time. More precisely, we have $n(0,t)>0$ whenever
\begin{equation}\label{i:tloss}
\frac{1}{2\sqrt t} < \sqrt{1+\delta}-1\,, \quad\mbox{where}\quad 2\delta = 
N[\nin]-\frac{1}{2}\,.
\end{equation}
\end{prop}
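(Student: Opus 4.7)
The strategy is to combine three ingredients already available: the photon-number identity \qref{mass+} from Lemma~\ref{l:mass}, the universal decay bound \qref{decay}, and the persistence statement Theorem~\ref{t:cond}(ii). I will argue by contrapositive: if $n(0,\tau)$ vanishes throughout an interval $(0,t)$, then $t$ cannot be too large, with a sharp threshold that is exactly \qref{i:tloss}.

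First I would send $s\to 0^+$ in the identity \qref{mass+}. This is legitimate because the hypothesis $N[\nin]>\frac12$ presupposes $N[\nin]<\infty$, so we are in the $p=0$ setting of Theorem~\ref{t.exist}, which yields $n\in C([0,\infty);L^1(0,1))$. Combined with monotone convergence on the right-hand side, this gives
\[
  N[n(\cdot,t)] = N[\nin] - \int_0^t n^2(0,\tau)\,d\tau.
\]
Under the assumption $n(0,\tau)=0$ on $(0,t)$, the integral vanishes, so $N[n(\cdot,t)] = N[\nin] = \frac12+2\delta$. Combining with the decay bound \qref{decay} yields $2\delta \le \frac1{2t}+\frac2{\sqrt t}$.

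Next comes elementary algebra. Substituting $u=1/\sqrt t$ reduces the inequality to $u^2+4u-4\delta\ge 0$, whose positive root is $u_\star = 2(\sqrt{1+\delta}-1)$. Hence necessarily $1/(2\sqrt t)\ge \sqrt{1+\delta}-1$. The contrapositive is precisely what is needed: if $1/(2\sqrt t)<\sqrt{1+\delta}-1$, then there must exist some $\tau\in(0,t)$ with $n(0,\tau)>0$. Finally, invoking the persistence statement Theorem~\ref{t:cond}(ii), positivity of $n(0,\tau)$ at any single time $\tau\in(0,t)$ forces $n(0,s)>0$ for every $s>\tau$; taking $s=t$ gives the claim.

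No serious obstacle is anticipated --- each step is essentially forced by the results already in hand. The only mild delicate point is justifying the limit $s\to0^+$ in \qref{mass+}, which requires the $L^1$-continuity of $n$ at $t=0$; this is cleanly supplied by the $p=0$ case of Theorem~\ref{t.exist}. The sharpness of the threshold in \qref{i:tloss} comes solely from optimizing the quadratic in $u=1/\sqrt t$, so it will fall out of the algebra without extra effort.
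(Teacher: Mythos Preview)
Your proposal is correct and follows essentially the same route as the paper: combine the universal decay bound \qref{decay} with the photon-number identity \qref{mass+} to force $\int_0^t n(0,\tau)^2\,d\tau>0$ once \qref{i:tloss} holds, then invoke persistence (Lemma~\ref{shock}) to get $n(0,t)>0$. The paper phrases the middle step directly rather than by contrapositive, and does not pause to justify the limit $s\to0^+$ in \qref{mass+}; your explicit appeal to the $p=0$ continuity from Theorem~\ref{t.exist} is a welcome clarification.
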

\begin{proof} From the supersolution  obtained in Lemma 3.2 it follows that
$$
n(x, t) \leq x + \frac{1-x}{t}+2 t^{-1/2}\,.
$$
Integration in $x$ over $(0, 1)$ leads to
$$
N[n(\cdot,t)] \leq \frac{1}{2} +\frac{1}{2t}+\frac{2}{\sqrt{t}}\,.
$$
Using Lemma~\ref{l:mass} we have
$$
\int_0^tn(0,\tau)^2\,d\tau \geq N[\nin] -\frac{1}{2}
-\frac{2}{\sqrt{t}}
- \frac{1}{2t} 
= 2\delta + 2-2\left(1+\frac1{2\sqrt t}\right)^2 \,.
$$
The right-hand side becomes positive when \qref{i:tloss} holds. The conclusion then
follows from Lemma~\ref{shock}.
\end{proof}

\subsection{Absence of condensates.}
Part (iv) of Theorem~\ref{t:cond} follows from a simple comparison:
If $\nin(x)\leq x$, then since $x=n_0(x)$ is a steady weak solution, the comparison
property from Theorem~\ref{t.uniq} implies  $n(x,t)\le x$ for all $t\ge0$.  Then $n(0^+,t)=0$,
so by part (a),
no condensate is formed
and we have
$$
N[n(\cdot,t)]=N[\nin] \quad \mbox{for all $t>0$}\,.
$$

\section{Large time convergence}\label{s:largetime}
We now investigate the large time behavior of solutions with non-trivial
initial data. In the system \qref{ne-}, the flux vanishes
for any equilibrium:
\begin{equation}
0=J = n^2\Dx\left(x-\frac{x^2}n\right)\,.
\end{equation}
Consequently $n=\nmu$ for some constant $\mu\ge0$, where
\[
\nmu(x) = \frac{x^2}{x+\mu}\,.
\]
Our main goal in this section is to prove
Theorem~\ref{t:lim}, which means that for every solution of \qref{ne-}
provided by
Theorem~\ref{t.exist} with nonzero initial data $\nin$,
there exists $\mu\geq 0$ such that
\begin{equation}\label{e:nmulim}
\|n(\cdot,t)-\nmu\|_1 = \int_0^1|n(x, t)-\nmu(x)|\,dx \to 0
\quad\text{as $t\to\infty$.}
\end{equation}

It will be convenient to denote by
\[
n(\cdot,t)=U(t)a
\]
the solution of \qref{ne-} with initial data $\nin(x)=a(x)$, $x\in(0,1)$.
 Due to the bound $n(x,t)\le  S(x,t)$
that holds by Theorem~\ref{t.exist}(i),  %Lemma~\ref{l:uss},
for $t\ge1$ any solution $U(t)\nin$ will lie in the set
$$
A:=\{ a\in L^\infty(0,1): 0 \leq a(x) \leq 3
\ \text{ for a.e. $x\in(0,1)$}\}\,,
$$
since $S(x,1)\equiv3$.  The set $A$ is positively invariant under
the semi-flow induced by the solution operator:
\[
U(t)A\subset A\,, \quad t\ge0\,.
\]
With the metric induced by the $L^1$ norm,
$$
\rho(n_1, n_2) =\|n_1-n_2\|_1\,,
$$
the set $A$ is a complete metric space, and
by Lemma \ref{contraction}, $U(t)$ is a contraction: We have
\[
\|U(t)a-U(t)b\|_1\le \|a-b\|_1\,.
\]
whenever $t\ge0$ and $a$, $b\in A$.

%\pagebreak
For present purposes it is important that
a stronger contractivity property also holds, as shown in Lemma~\ref{contraction}:
Namely, if the functions $a$ and $b$ are $C^1$ and {\it cross
transversely}, then for $t>0$, $U(t)$ strictly contracts the $L^1$ distance
between $a$ and $b$.  Based on these contraction properties and the one-sided
Oleinik bound in Theorem~\ref{t.exist}(ii), % Lemma \ref{l:ec}, 
we proceed to establish the large time convergence \qref{e:nmulim}.

We introduce the usual $\omega$-limit set of any element $a\in A$ as
$$
\omega(a) = \cap_{s>0}\,\overline{
\{U(t)a\mid t\ge s\}}\,.
$$
We have $b\in\omega(a)$ if and only if there is a sequence $\{t_j\}\to \infty$
such that $\|U(t_j)a- b\|_1\to 0$.

%% Step 1 Compactness
\begin{lem}\label{l:compact}(The $\omega$-limit set) Let $a\in A$.
Then $\omega(a)$ is not empty, and is invariant under $U(t)$, with
\begin{equation}\label{uo}
U(t)\omega(a)=\omega(a)\,, \quad t>0\,.
\end{equation}
Moreover, for any $b\in \omega(a)$, $b$ is smooth
(at least $C^2$ on $(0,1]$)
and satisfies
\begin{equation}\label{e:bbx}
\Dx b(x) \geq 0\,, \quad  0\leq b(x) \leq x\,,\qquad 0<x<1\,.
\end{equation}
\end{lem}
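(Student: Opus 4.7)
The plan is to establish $L^1$-precompactness of each forward tail $\{U(t)a : t\ge s\}$ for $s>0$, and then read off the four claims from this fact together with the $L^1$-contraction of Lemma~\ref{contraction}, the Oleinik-type inequality, the universal super-solution, and the regularity theorem.

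\emph{Precompactness and nonemptiness.} Fix $s>0$. Since $a\in A$, Theorem~\ref{t.exist}(i) gives the uniform pointwise bound $0\le U(t)a(x)\le S(x,s)$ for all $t\ge s$, and Theorem~\ref{t.exist}(ii) gives $\Dx U(t)a \ge -4/s$ for the same range of $t$. Writing $\Dx n = (\Dx n)_+ - (\Dx n)_-$ with $(\Dx n)_-\le 4/s$, and controlling $\int_0^1 \Dx n\,dx = n(1,t)-n(0^+,t)$ by the $L^\infty$ bound, we obtain a uniform bound on the total variation $\int_0^1|\Dx U(t)a|\,dx$. Helly's selection theorem yields $L^1$-precompactness of $\{U(t)a : t\ge s\}$, so extracting a subsequential $L^1$-limit along any $t_j\to\infty$ gives a point of $\omega(a)$; in particular $\omega(a)\ne\emptyset$.

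\emph{Invariance.} For $b\in\omega(a)$ with $U(t_j)a\to b$ and any $t>0$, the $L^1$-contraction from Lemma~\ref{contraction} gives
\[
U(t+t_j)a = U(t)U(t_j)a \to U(t)b \quad \text{in } L^1,
\]
so $U(t)b\in\omega(a)$; this proves $U(t)\omega(a)\subset\omega(a)$. For the reverse inclusion, for $j$ large the sequence $\{U(t_j-t)a\}$ lies in the precompact tail $\{U(\tau)a : \tau \ge s\}$ (any $s<\min_j(t_j-t)$), so a further subsequence converges in $L^1$ to some $c$, necessarily in $\omega(a)$, and by continuity of $U(t)$ we get $U(t)c = b$. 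Hence $b\in U(t)\omega(a)$.

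\emph{Smoothness and the inequalities.} Fix any $b\in\omega(a)$. By invariance, for each $s>0$ we may write $b = U(s)c_s$ with $c_s\in\omega(a)\subset A$. Theorem~\ref{t:regeg} applied to the weak solution $t\mapsto U(t)c_s$ shows that its time-$s$ profile is H\"older continuous together with $\Dx$ and $\Dxx$ on $(0,1]$ and smooth in the interior of $(0,1)$; this is precisely the claimed regularity for $b$. Applying the Oleinik bound of Theorem~\ref{t.exist}(ii) to $U(\cdot)c_s$ at time $s$ yields $\Dx b \ge -4/s$, and letting $s\to\infty$ gives $\Dx b\ge 0$ on $(0,1)$. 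Nonnegativity $b\ge 0$ is immediate from $b\in A$. Finally, the super-solution bound $U(t_j)a(x) \le x + (1-x)/t_j + 2/\sqrt{t_j}$ from Theorem~\ref{t.exist}(i) passes to the limit a.e.\ along an $L^1$-convergent subsequence and then everywhere by continuity of $b$, giving $b(x)\le x$.

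The only step requiring genuine care is the $L^1$-precompactness of the forward tails, but this is a direct consequence of the one-sided Oleinik bound combined with the universal super-solution: both estimates are already in hand, and the rest of the argument is the standard LaSalle-type machinery for contraction semigroups with precompact orbits.
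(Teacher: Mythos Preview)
Your proof is correct and follows essentially the same route as the paper: precompactness of forward tails via the Oleinik bound and the universal super-solution (Helly), forward invariance from the $L^1$-contraction, and the regularity and pointwise inequalities by writing $b=U(s)c_s$ and letting $s\to\infty$. One point where your argument is in fact more complete than the paper's printed proof: for the equality $U(t)\omega(a)=\omega(a)$, the paper's text establishes only the inclusion $U(t)\omega(a)\subset\omega(a)$ (indeed twice), whereas the reverse inclusion $\omega(a)\subset U(t)\omega(a)$---which is what is actually used to write $b=U(s)b^*$ with $b^*\in\omega(a)$---is exactly what you supply by extracting a convergent subsequence of the backward-shifted orbit $\{U(t_j-t)a\}$ using precompactness.
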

\begin{proof} To show $\omega(A)$ is not empty,
note that for any sequence $t_j\to\infty$, the estimates from
Lemma~\ref{l:bvlem} show that $\{U(t_j)a\}$ is bounded in $BV$.
By virtue of the Helley compactness theorem, some subsequence converges
in $L^1$, and this limit belongs to $\omega(a)$.

Next we prove (\ref{uo}). Given $b\in \omega(a)$, there exists $t_j$ such that
$$
\|U(t_j)a-b\|_1 \to 0\,, \quad j\to \infty\,.
$$
From $L^1$ contractivity and the semigroup property it follows that
$\|(U(t+t_j)a-U(t)b\|_1 \to 0$, hence $U(t)b\in \omega(a)$.
On the other hand, if $b\in U(t)\omega(a)$, we have $b=U(t)b^*$
with $b^* \in \omega(a)$. Then for some sequence $t_j\to\infty$,
$$
\|U(t+t_j)a-b\|_1=\|U(t)U(t_j)a-U(t)b^*\|_1\leq \|U(t_j)a-b^*\|_1 \to 0
$$
as $j\to \infty$, hence  $b\in \omega(a)$.

By relation (\ref{uo}), for each $b\in\omega(a)$ and $t>0$, $b=U(t)b^*$ for
some $b^*\in \omega(a)$. From this it follows $b$ is smooth
and that $\Dx b\ge-4/t$ and $0\le b(x)\le S(x,t)$ by Theorem~\ref{t.exist}.
Taking $t\to\infty$, since $S(x,t)\to x$ we infer \qref{e:bbx}.
\end{proof}
%Step 2 (Time-invariance). \\

\begin{lem}(Equilibria and $\omega(a)$)
(i) If  $n_\mu\in \omega(a)$ for some $\mu \geq 0$, then
\begin{equation}\label{ma}
\lim_{t\to \infty} \|U(t)a -n_\mu\|_1 =0\,.
\end{equation}
(ii) Let $b\in \omega(A)$. Then for any $\mu \geq 0$,
\begin{equation}\label{dmu}
 \|b-n_\mu\|_1 =\|U(t)b - n_\mu \|_1\,.
 \end{equation}
(iii) If $a\not\equiv0$, then $0\notin\omega(a)$.
\end{lem}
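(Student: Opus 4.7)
\emph{Part (i).} The plan is to exploit the $L^1$-contraction of $U(t)$ (Lemma~\ref{contraction}) together with the stationarity $U(t)n_\mu=n_\mu$. These give
\[
  \|U(t)a-n_\mu\|_1=\|U(t-s)U(s)a-U(t-s)n_\mu\|_1\le\|U(s)a-n_\mu\|_1,\qquad t\ge s\ge0,
\]
so $t\mapsto\|U(t)a-n_\mu\|_1$ is nonincreasing and admits a limit as $t\to\infty$. The hypothesis $n_\mu\in\omega(a)$ produces a sequence $t_j\to\infty$ along which this distance tends to $0$, which forces the full limit to vanish, proving \eqref{ma}.

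\emph{Part (ii).} The functional $\psi(n):=\|n-n_\mu\|_1$ is continuous on $L^1$, and by the argument above $t\mapsto\psi(U(t)a)$ is nonincreasing, hence converges to some $\psi_\infty\ge0$. Any $b\in\omega(a)$ is the $L^1$-limit of $U(t_j)a$ along some sequence $t_j\to\infty$, so by continuity $\psi(b)=\psi_\infty$; thus $\psi$ is constant on $\omega(a)$. The forward invariance $U(t)\omega(a)\subseteq\omega(a)$ established in Lemma~\ref{l:compact} then implies $U(t)b\in\omega(a)$, whence $\psi(U(t)b)=\psi_\infty=\psi(b)$, which is exactly \eqref{dmu}.

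\emph{Part (iii).} The plan is to dominate $U(t)a$ from below by an orbit whose photon number is provably conserved. Set $\bnin(x):=\min\{\nin(x),x\}$, which is bounded, nonnegative, and satisfies both $\bnin\le x$ and $\bnin\le\nin$. If $\nin\not\equiv0$ on $(0,1)$, then $\bnin\not\equiv0$ there as well (since $x>0$), so $N[\bnin]>0$. By Theorem~\ref{t:cond}(iv) applied with initial datum $\bnin$, the solution $U(\cdot)\bnin$ has conserved mass: $N[U(t)\bnin]=N[\bnin]>0$ for every $t>0$. The comparison part of Theorem~\ref{t.uniq} yields $U(t)a\ge U(t)\bnin$ a.e., so $N[U(t)a]\ge N[\bnin]>0$ uniformly in $t>0$. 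If $0$ were in $\omega(a)$, some sequence $t_j\to\infty$ would give $N[U(t_j)a]=\|U(t_j)a\|_1\to0$, contradicting this lower bound. Hence $0\notin\omega(a)$.

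\emph{Main obstacle.} Parts (i) and (ii) are essentially dynamical-systems folklore once the $L^1$-contraction is in hand. The genuinely problem-specific step is part (iii): one must identify a sub-initial-datum whose orbit has provably conserved photon number. The choice $\bnin=\min(\nin,x)$ succeeds precisely because Theorem~\ref{t:cond}(iv) characterises absence of condensation by the pointwise bound $\nin\le x=n_0$, and because both the comparison principle and the conservation statement are available for initial data bounded by $x$.
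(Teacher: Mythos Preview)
Your proofs of parts (i) and (ii) are correct and essentially identical to the paper's (with the minor caveat that the statement has $b\in\omega(A)$, meaning $b\in\omega(a')$ for \emph{some} $a'\in A$; your argument applies verbatim to that $a'$).

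For part (iii), your argument is correct and considerably simpler than the paper's. The paper does not use a comparison sub-solution; instead it argues by contradiction that if $N[U(t)a]\to0$, then by Lemmas~\ref{l:mass} and~\ref{shock} a condensate forms and persists, so $n(0^+,t)>0$ for large $t$. It then uses the Oleinik bound $\Dx n\ge -4/t$ and the universal super-solution $S$ to locate a last crossing point $x_1$ with the equilibrium $n_0(x)=x$, and carefully estimates the pieces of $\int_0^1|x-n(x,t)|\,dx$ to show this distance drops strictly below $\|x-0\|_1=\tfrac12$, contradicting the monotonicity of $\|U(t)a-n_0\|_1$ combined with $U(t)a\to0$. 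Your route bypasses all of this: the single observation that $\bar n^{\rm in}=\min\{a,x\}$ lies below $n_0$, so Theorem~\ref{t:cond}(iv) conserves its mass, and comparison (Theorem~\ref{t.uniq}) transfers that as a uniform lower bound $N[U(t)a]\ge N[\bar n^{\rm in}]>0$. This is more direct and avoids the Oleinik inequality and super-solution entirely. The paper's argument, while more involved, is self-contained within the section's toolkit (Oleinik bound, super-solution, $L^1$-contraction) and does not invoke Theorem~\ref{t:cond}; yours leans on results from Section~\ref{s:bec}, which is perfectly legitimate since those are already established.
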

\begin{proof} (i) By definition, for any $\epsilon>0$,  $\|U(t_j)a-n_\mu\|_1<\epsilon$ for large $t_j$.  This ensures that for any $t>t_j$, 
$$
\|U(t)a-n_\mu\|_1=\|U(t-t_j)U(t_j)a-U(t-t_j)n_\mu\| \leq\|U(t_j)a-n_\mu\|_1<\epsilon \,,
$$
hence (\ref{ma}). \\
(ii) Since  $b\in \omega(A)$, there exists $a\in A$ and a sequence $\{t_j\}$ such that $t_j\to \infty$ as $j\to \infty$ and
$$
\lim_{t \to \infty} \|U(t_j)a-b\|_1=0\,.
$$
Given any $\mu \geq 0$, by contraction of $U(t)$ we know that
$$
\|U(t)a -n_\mu\|_1 = \|U(t)a - U(t)n_\mu\|_1
$$
is decreasing in time and thus admits a limit $c_\mu \geq 0$ as $t \to \infty$, i.e.,
$$
\lim_{t\to \infty}  \|U(t)a-n_\mu\|_1=c_\mu\,, \quad  t \to \infty\,.
$$
Letting $t=t_j$  in the above equation and passing to the limit,  we have
$$
\|b- n_\mu \|_1=c_\mu\,.
$$
Note that if $b \in \omega(a)$, then $U(t)b \in \omega(a)$; thereby
$$
 \|U(t)b -n_\mu\|_1 =c_\mu\,.
$$
Therefore (\ref{dmu}) holds  for $\forall t>0, \quad  \mu \geq 0$.

(iii) Suppose $a\not\equiv0$, so that $N[a]>0$.
We claim $0\notin\omega(a)$.  Supposing $0\in\omega(a)$ instead,
we write $n(\cdot,t)=U(t)a$.
Then $N[n(\cdot,t)]=\|U(t)a-0\|_1$ is non-increasing and approaches zero
as $t\to\infty$. 
By Lemmas~\ref{l:mass} and \ref{shock}, then,
a condensate forms and $n(0^+,t)>0$ for all large $t$.

From the Oleinik-type lower bound of Theorem~\ref{t.exist}(ii),
 $x<z<1$ entails $n(x,t)-\frac4t\le n(z,t)$. After integration
from $1-x$ to $1$ we find
\[
 (1-x)\left(n(x,t)-\frac4t\right)\le N[n(\cdot,t)]\,.
\] 
For $t$ large enough we have $N[n(\cdot,t)]<\frac1{16}$ and $t>32$,
and this ensures that 
\[
\mbox{for all $x\in[\frac14,\frac12]$}\,,\qquad 
 n(x,t) \le 2N[n(\cdot,t)] + \frac4t <\frac14\le x\,.
 \]
Then, because $n(0^+,t)>0$,  the last crossing point defined by
\begin{equation}
x_1 = \max\{ x\in(0,\frac14]: n(x,t)=x\}
\end{equation}
is well defined. Using again Theorem~\ref{t.exist}(ii), it now follows
\begin{align*}
0\le n(x,t)\le x_1+ \frac4t x_1 \quad \mbox{for $0<x<x_1$}\,,   \\
x \ge n(x,t)\ge x_1-\frac4t x_1 \quad \mbox{for $x_1<x<2x_1$}\,.
\end{align*}
From these inequalities, we deduce respectively that
\begin{align*}
\int_0^{x_1}|x-n(x,t)|\,dx &\le x_1^2\left(1+\frac 4t\right)\,,\\
\int_{x_1}^{2x_1}|x-n(x,t)|\,dx &\le \int_{x_1}^{2x_1} x\,dx - x_1^2\left(1-\frac4t\right)\,.
\end{align*}
We may also assume $t$ is so large that $S(x,t)<2x$ for
$\frac12\le x\le 1$. Then since $0\le n(x,t)\le S(x,t)$,
it follows
\[
\int_{2x_1}^1 |x-n(x,t)|\,dx \le \int_{2x_1}^1 x\,dx\,.
\]
Because $x_1^2 (8/t)<\int_0^{x_1}x\,dx$, after
adding the last three inequalities we find
$\|x-U(t)a\|_1< \|x-0\|_1$.
But then since $\|x-U(t)a\|_1$
is nonincreasing in $t$, it is impossible that $\|U(t)a-0\|_1\to0$
as $t\to\infty$. This proves $0\notin\omega(a)$.
 \end{proof}

The following restatement of the result in Lemma \ref{contraction}
 plays a critical role in proving (\ref{e:nmulim}).
\begin{lem} \label{l:tran} If $ a, b \in A\cap C^1((0,1))$
and $a$ and $b$ cross transversely at least once on $(0, 1)$, then
$$
\|U(t)a- U(t)b \|_1 <\|a-b\|_1\,, \quad t>0\,.
$$
\end{lem}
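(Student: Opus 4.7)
The plan is to invoke Lemma~\ref{contraction} directly. Indeed, the present statement is nothing more than its reformulation in the semigroup notation $n(\cdot,t)=U(t)a$, $\bar n(\cdot,t)=U(t)b$, with crossing time $t_0 = 0$, so the proof reduces to verifying the hypotheses of Lemma~\ref{contraction}.

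The nontrivial task is checking that $n$ and $\bar n$ are $C^1$ on $(0,1)\times[0,\infty)$. For $t>0$, Theorem~\ref{t:regeg} already supplies smoothness of $n$ and $\bar n$ in the interior of $Q$. To extend this up to $t=0$ on compact subsets of $(0,1)$, I would appeal to standard parabolic regularity: on any cylinder $[x_1,x_2]\times[0,T]\subset (0,1)\times[0,\infty)$, the diffusion coefficient $x^2$ is bounded below by $x_1^2>0$, so the equation is uniformly parabolic there. Combined with the $L^\infty$ bound coming from $a,b\in A$ and the $C^1$ regularity of the initial data, classical interior parabolic Schauder-type estimates yield $C^1$ regularity of the solutions in the interior up to $t=0$.

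With this regularity in hand, since $a$ and $b$ cross transversely at some $x_0\in(0,1)$ by hypothesis, the assumptions of Lemma~\ref{contraction} are satisfied with $t_0=0$. Applying that lemma directly produces
$$
\|U(t)a-U(t)b\|_1 < \|a-b\|_1 \qquad \text{for all } t>0,
$$
which is the desired inequality.

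The main potential obstacle is a clean verification of $C^1$ regularity up to $t=0$, since the initial data is assumed only $C^1$ rather than $C^{1,\alpha}$ for some $\alpha>0$. A safer route that sidesteps this technicality is to pick a small $t_0>0$, use Theorem~\ref{t:regeg} to guarantee that $U(t_0)a$ and $U(t_0)b$ are smooth on $(0,1)$, argue (using continuity of $t\mapsto U(t)a$ in an appropriate $C^1$-type topology on compact subsets of $(0,1)$ for short times) that these iterates still cross transversely near $x_0$, then invoke Lemma~\ref{contraction} at this positive time $t_0$ and combine the resulting strict inequality with the (non-strict) $L^1$-contraction bound on $[0,t_0]$ from Corollary~\ref{c:uniq} to conclude.
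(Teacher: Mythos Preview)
Your proposal is correct and matches the paper's approach exactly: the paper presents Lemma~\ref{l:tran} explicitly as ``a restatement of the result in Lemma~\ref{contraction}'' in the semigroup notation, with no separate proof given. Your discussion of the $C^1$-up-to-$t=0$ regularity is in fact more careful than what the paper provides; note, though, that your ``safer route'' is not genuinely safer, since arguing that $U(t_0)a$ and $U(t_0)b$ still cross transversely for small $t_0>0$ already requires $C^1$-continuity of the flow at $t=0$, which is essentially the same regularity you were trying to avoid.
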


We are now ready to prove (\ref{e:nmulim}).
Let $a\in A$ with $a\not\equiv0$.  By Lemma \ref{l:compact} we know that
$\omega(a)$  is not empty.  Let $b\in \omega(a)$.
We need to show there exists a $\mu \geq 0 $ such that
 \begin{equation}\label{bnmu}
 b = n_\mu\,.
\end{equation}
Since  $b\not\equiv0$ and $b$ is non-decreasing, the 
quantity
\[
g(x)=x-\frac{x^2}{b(x)} \,,
\]
which is the first variation $\delta H/\delta n$ of entropy,
is well defined in some non-empty interval $(x_0, 1)$.
If $g$ is not a constant, there exists  some $x^*\in (x_0, 1)$
such that $g'(x^*)\not=0$.
Then it follows that at $x=x^*$,
with $\mu^*=-g(x^*)$ we have
$$
b=\frac{x^2}{x-g(x)} = n_{\mu^*}\,, \qquad 
\Dx b = \Dx n_{\mu*}+\frac{x^2 g'}{(x-g(x))^2}\not= \partial_x n_{\mu^*}\,.
$$
In other words, $b$ and $n_{\mu^*}$ cross transversely at
$x^*$. Therefore by Lemma \ref{l:tran}  we have
$$
\|U(t)b - U(t) n_{\mu^*} \|_1 < \|b-n_{\mu^*}\|_1\,.
$$
This contradicts (\ref{dmu}).  We conclude that $g$ must be a constant,
i.e.,  $g(x)=-\mu$, which gives  (\ref{bnmu}).   From $b\not=0$ and $b\leq x$ we
see that $\mu \geq 0$.

\begin{rem} Due to loss of mass,  determining  $\mu$ quantitively for each given initial data is not straightforward, except for some special cases as treated in Corollary~\ref{c:mulim}.
\end{rem}
\begin{proof}[Proof of Corollary~\ref{c:mulim}]
If $\nin \geq x$, by the comparison  result in Theorem~\ref{t.uniq},  we have
$$
x \leq n(x, t)\,, \quad t>0\,.
$$
On the other hand, the supersolution bound from Theorem~\ref{t.exist}(i) ensures that
$$
n(x, t) \leq x+\frac{1-x}{t}+2t^{-1/2}.
$$
These together lead to (\ref{nd:f}), hence  $\lim_{t \to \infty} n(x, t)= x$.

In the case of $\nin\leq x$,  we have $n(x,t)\le x$ for all $t$. 
Then there is no mass loss, hence the limiting equilibrium state $n_\mu$ 
satisfies
\[
\int_0^1 n_\mu\,dx = \int_0^1 \nin\,dx =N[\nin]\ .
\]
Integration of the left-hand side yields \qref{mu}.
\end{proof}

\appendix

\section{Anisotropic Sobolev estimates.}

For use in section 4 and Appendices~\ref{s:regexist} and~\ref{s:interior}, we need some
basic anisotropic Sobolev estimates that are not easy to find in the extensive literature on the subject. 
The results that we need appear to be related to  embedding results for 
anisotropic Besov spaces contained in the books \cite{BIN12}.  
For the reader's convenience, however, we provide a self-contained treatment
based on simple estimates for Fourier transforms.

If $\Omega\subset \R^2$,  the typical anisotropic Sobolev space is
$$
u\in W^{2k, k}_2(\Omega)=\{u\mid D^s_x D^r_t u \in L^2(\Omega), \ 0\leq 2r+s\leq 2k\}.
$$
As usual,  if a function $u\in  W^{2k, k}_2(\Omega)$,  it will automatically belong to certain other spaces, which depend on $k$ and the dimension. One such space is $C^{\gamma, \gamma/2}(\Omega)$. We say $u\in C^{\gamma,\gamma/2}(\Omega)$ if there is a constant $K$ such that
$$
|u(x, t)- u(y, \tau)|\leq K (|x-y|^2+|t-\tau|)^{\gamma/2} 	\qquad\forall (x, t), (y, \tau)\in \Omega.
$$
The space $C^{\gamma, \gamma/2}(\Omega)$ is a Banach space with norm given by 
$$
\|u\|_{C^{\gamma, \gamma/2}(\Omega)}=\max_{(x, t)\in \Omega}|u(x, t)|+\sup_{(x, t), (y, \tau)\in \Omega}\frac{|u(x, t)- u(y, \tau)|}{(|x-y|^2+|t-\tau|)^{\gamma/2}}.
$$
The results we need are contained in the following result.
\begin{thm}\label{t:ani}
Let $D= (a,b)\times(c,d)$ be a rectangular domain in $\R^2$.
Suppose that $u$ and its distributional derivatives $\Dt u$ and $\Dx^2 u$ 
are in $L^2(D)$, i.e., $u\in W_2^{2, 1}(D)$.  Then $u\in C^{1/2,1/4}(D)$,  and there is a constant $C$ depending on $D$ and $s$ such that 
$$
\|\Dx u\|_{L^s(D)} \leq C \|u\|_{W_2^{2, 1}(D)}, \quad 2\leq s<6.  
$$
\end{thm}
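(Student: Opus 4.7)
The plan is to reduce to Fourier analysis on all of $\R^2$. First, I would construct a bounded extension $E: W_2^{2,1}(D) \to W_2^{2,1}(\R^2)$ with compact support by iterated even reflection across each of the four edges of $D$ followed by a smooth cutoff; the key point is that even reflection preserves both $\partial_x^2 u \in L^2$ across the horizontal edges and $\partial_t u \in L^2$ across the vertical edges, since these derivatives have the correct parity. After this reduction, the hypothesis is equivalent to the single weighted estimate
\[
M^2 := \int_{\R^2} (1+\xi^4+\tau^2)\,|\hat u(\xi,\tau)|^2 \,d\xi\,d\tau < \infty \,,
\]
and both conclusions will follow from elementary weighted Fourier estimates.

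For the H\"older estimate, I would use the triangle inequality to split $u(x,t)-u(y,s)$ into the $x$-increment $u(x,t)-u(y,t)$ and the $t$-increment $u(y,t)-u(y,s)$, write each as an inverse Fourier transform, and apply Cauchy--Schwarz to obtain, for instance,
\[
|u(x,t)-u(y,t)|^2 \leq M^2 \int_{\R^2} \frac{|e^{i(x-y)\xi}-1|^2}{1+\xi^4+\tau^2}\,d\xi\,d\tau \,.
\]
Carrying out the inner integration first (using $\int d\tau/(1+\xi^4+\tau^2) = \pi(1+\xi^4)^{-1/2}$ and $\int d\xi/(1+\xi^4+\tau^2) = C(1+\tau^2)^{-3/4}$), and then splitting the remaining one-dimensional integral at $|\xi| = 2/|x-y|$ (respectively $|\tau| = 2/|t-s|$) using the bound $|e^{i\theta}-1|^2 \leq \min(4,\theta^2)$, a direct calculation gives $|u(x,t)-u(y,t)|^2 \leq C M^2 |x-y|$ and $|u(y,t)-u(y,s)|^2 \leq C M^2 |t-s|^{1/2}$, which combine to yield the $C^{1/2,1/4}$ bound.

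For the $L^s$ bound on $\partial_x u$ with $s \in [2,6)$, the endpoint $s=2$ follows immediately from Parseval and $\xi^2 \leq 1+\xi^4+\tau^2$. For $s > 2$, the Hausdorff--Young inequality gives $\|\partial_x u\|_{L^s} \leq C\|\xi \hat u\|_{L^{s'}}$ with $s' = s/(s-1) \in (1,2)$. Writing $\xi \hat u = (\xi/\Lambda) \cdot (\Lambda \hat u)$ with $\Lambda = (1+\xi^4+\tau^2)^{1/2}$ and applying H\"older's inequality with exponents $2/s'$ and $2/(2-s')$ reduces the problem to verifying that
\[
\int_{\R^2} \frac{|\xi|^{2q}}{(1+\xi^4+\tau^2)^q}\,d\xi\,d\tau < \infty \,, \qquad q := \frac{s'}{2-s'} \,.
\]
The $\tau$-integration requires $q > 1/2$ and produces a factor proportional to $(1+\xi^4)^{1/2-q}$; the remaining $\xi$-integrand then decays like $|\xi|^{2-2q}$ at infinity, which is integrable iff $q > 3/2$, equivalently $s < 6$. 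This sharp integrability threshold is the main technical obstacle and the precise source of the restriction $s<6$; the other nontrivial point is arranging the extension step to respect the anisotropic structure of the norm, but the required parity of derivatives makes standard even reflection sufficient.
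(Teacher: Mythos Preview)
Your Fourier analysis on $\R^2$ is essentially the same as the paper's, and correct: the paper also characterizes $W_2^{2,1}(\R^2)$ by the weight $m=(1+\xi^4+\tau^2)^{1/2}$, proves the \holder\ bound by estimating $\|m^{-1}(|r\xi|\wedge1)\|_2$ and $\|m^{-1}(|r^2\tau|\wedge1)\|_2$ (your splitting at $|\xi|\sim 1/|x-y|$ is the same computation), and obtains the $L^s$ bound on $\Dx u$ from Hausdorff--Young plus the integrability of $|\xi|/m$ in $L^r$ for $r>3$.

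The gap is in your extension step. Simple even reflection in $x$ across the vertical edges $x=a,b$ does \emph{not} preserve $\Dx^2 u\in L^2$. The ``correct parity'' heuristic is misleading: under even reflection of $u$, the pointwise expression for $\Dx^2 u$ is indeed even, but the \emph{distributional} second derivative of the extended function differs from this by a singular measure on $\{x=a\}$ of density $2\,\Dx u(a,\cdot)$, coming from the jump in the (odd) extension of $\Dx u$. Since $\Dx u\in H^1$ in the $x$-variable, its trace at $x=a$ is well defined but has no reason to vanish, so in general the extension lands only in $W_2^{1,1}$, not $W_2^{2,1}$. (Your parity argument \emph{does} work for the $t$-reflection across horizontal edges, because there only one $t$-derivative is involved, and even reflection of an $H^1$ function stays in $H^1$.) The paper handles this by a higher-order reflection in $x$ of Adams type, e.g.\ $E_xu(x,t)=-3u(2a-x,t)+4u(-x/2+3a/2,t)$ for $x<a$, chosen so that both $u$ and $\Dx u$ match at $x=a$; this kills the singular part of $\Dx^2(E_xu)$ and yields a bounded extension $W_2^{2,1}(D)\to W_2^{2,1}(\R^2)$. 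With that correction, your argument goes through.
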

 With very little more work, one can discuss higher-order embeddings and arbitrary space dimensions.  We will not provide the details here, but the result is summarized as follows. 
\begin{thm} Let $D$ be a bounded parabolic cylinder in $\R^{n+1}$ with $C^1$ spatial boundary.  Suppose that $u\in W_2^{2k, k}(D)$, then 
%\\(i)$$
\begin{flalign*}
\text{(i)} & \qquad
W_2^{2k, k}\to \left\{
 \begin{array}{lll} C^{\gamma, \gamma/2}, & \gamma=2k-\frac{n+2}{2}, & k>\frac{n+2}{4},\\
L^s, & 2\leq s <\infty, & k=\frac{n+2}{4}, \\
 L^s, & 2\leq s< \frac{2(n+2)}{(n+2)- 4k}, &k<\frac{n+2}{4}.
 \end{array}
\right.
&
\\
%$$
(ii) &\qquad
C\|u\|_{W_2^{2k, k}} \geq  \left\{
 \begin{array}{lll}
 \|\Dx u\|_\infty, & {}  & k>\frac{n}{4} +1,\\
\|\Dx u\|_{L^s(D)}, & 2\leq s <\infty, & k=\frac{n}{4} +1, \\
 \|\Dx u\|_{L^s(D)}, & 2\leq s< \frac{2(n+2)}{(n+4)-4k}, &k<\frac{n}{4} +1.
 \end{array}
\right.
&
\end{flalign*}
%$$
\end{thm}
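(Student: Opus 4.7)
The plan is to reduce to the whole-space case $D = \R^{n+1}$ via an extension operator, then use the Fourier transform together with elementary $L^p$ estimates. Throughout we use the anisotropic symbol $\sigma(\xi,\tau) := (1 + |\xi|^4 + \tau^2)^{1/2}$ whose $k$-th power gives an equivalent norm for $W_2^{2k,k}(\R^{n+1})$, i.e.\
\[
  \|u\|_{W_2^{2k,k}(\R^{n+1})}^2 \;\asymp\; \int_{\R^{n+1}} \sigma(\xi,\tau)^{2k}\,|\hat u(\xi,\tau)|^2\,d\xi\,d\tau.
\]

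First I would construct an anisotropic extension $Eu \in W_2^{2k,k}(\R^{n+1})$ with $\|Eu\|_{W_2^{2k,k}(\R^{n+1})}\le C\|u\|_{W_2^{2k,k}(D)}$. Since $D$ is a bounded parabolic cylinder with $C^1$ spatial boundary, we can flatten the lateral boundary locally and then use a higher-order reflection of Lichtenstein-Babich type adapted to the parabolic scaling (even reflection in $t$ across the top and bottom, and a finite linear combination of dilated reflections in $x$ across the lateral faces tuned to match $u$, $\Dx u$, $\ldots$ up to the required order). Once this is in place, all estimates on $D$ follow from the corresponding estimates on $\R^{n+1}$ applied to $Eu$ and then restricted.

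For part (i), I would prove the Hölder embedding by Fourier inversion and Cauchy--Schwarz. Writing
\[
  u(x,t)-u(y,s) = c\!\int \hat u(\xi,\tau)\bigl(e^{i(\xi\cdot x+\tau t)} - e^{i(\xi\cdot y+\tau s)}\bigr)\,d\xi\,d\tau,
\]
I would use the bound $|e^{ia}-e^{ib}|\le 2^{1-\theta}|a-b|^\theta$ together with the anisotropic "distance" $d((x,t),(y,s)) = |x-y| + |t-s|^{1/2}$ and split $|e^{i(\xi\cdot x+\tau t)}-e^{i(\xi\cdot y+\tau s)}| \le C\,d^{\gamma}\,(|\xi|+|\tau|^{1/2})^{\gamma}$. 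Cauchy--Schwarz then gives
\[
  |u(x,t)-u(y,s)|^2 \le C\,d^{2\gamma} \|u\|_{W_2^{2k,k}}^2 \int \sigma(\xi,\tau)^{-2k}(|\xi|+|\tau|^{1/2})^{2\gamma}\,d\xi\,d\tau,
\]
and the last integral converges precisely when $2\gamma < 2k - (n+2)/2$, with the critical case $k = (n+2)/4$ providing only $L^s$ for all finite $s$ (by the same Cauchy--Schwarz argument on the Fourier side combined with Hausdorff--Young, or equivalently by logarithmic interpolation). For the subcritical case $k < (n+2)/4$, the $L^s$ bound with $s < 2(n+2)/((n+2)-4k)$ follows by the anisotropic Hardy--Littlewood--Sobolev / Riesz potential estimate, using that convolution with the kernel of Fourier symbol $\sigma^{-k}$ maps $L^2$ to $L^s$ for exactly that range of $s$ (this may be proved by writing $\sigma^{-k}$ as a superposition of anisotropic Gaussian heat kernels and applying Young's inequality with the parabolic scaling $|B_r(x,t)|\asymp r^{n+2}$ in the quasi-metric $d$).

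Part (ii) is obtained by applying part (i) to $\Dx u$, which sits in $W_2^{2k-1,k-1/2}$ at the symbolic level, but more directly: the Fourier multiplier for $\Dx$ is $\xi$, and one runs exactly the same argument as above with the integrand $\sigma(\xi,\tau)^{-2k}|\xi|^2(|\xi|+|\tau|^{1/2})^{2\gamma}$ (for Hölder) or $\sigma(\xi,\tau)^{-2k}|\xi|^2$ convolved against itself (for $L^s$). The convergence condition shifts by one derivative, producing the thresholds $k > n/4 + 1$ for $\|\Dx u\|_\infty$ and $s < 2(n+2)/((n+4)-4k)$ in the subcritical regime.

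The main obstacle I expect is the construction of a clean anisotropic extension operator respecting both the $\Dt$ and $\Dx^{2k}$ components of the norm simultaneously on a cylinder with only $C^1$ spatial boundary; the Fourier-side estimates on $\R^{n+1}$ are essentially dimensional-analytic once set up. In the two-dimensional case of Theorem~A.1 ($n=1$, $k=1$) all of this simplifies dramatically: the extension can be done by a couple of explicit reflections, and the Sobolev range $2\le s<6$ matches the formula $s<2(n+2)/((n+4)-4k)=6$ exactly, so the proof reduces to a direct Fourier estimate without the need for the general potential theory above.
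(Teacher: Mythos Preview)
First, note that the paper does not actually prove this theorem: it is stated as a summary of what ``very little more work'' would yield, with details omitted.  Only the special case $n=1$, $k=1$ (Theorem~A.1) is proved.  So there is no full proof in the paper to compare against; I compare your plan to the paper's proof of Theorem~A.1, whose strategy (extension to the whole space, then Fourier estimates against the symbol $m=\sigma$) is the same as yours.

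There is a genuine gap in your H\"older step.  Using only the interpolated bound $|e^{ia}-e^{ib}|\le C|a-b|^\gamma$ leads to the integral $\int \sigma^{-2k}(|\xi|+|\tau|^{1/2})^{2\gamma}\,d\xi\,d\tau$, and a scaling computation shows this converges only for $\gamma<2k-(n+2)/2$ strictly (your stated condition ``$2\gamma<2k-(n+2)/2$'' is an arithmetic slip---the correct threshold is $\gamma<2k-(n+2)/2$---but even corrected, the inequality is strict).  At the endpoint $\gamma=2k-(n+2)/2$ the integral diverges logarithmically, so your argument does not reach the exponent claimed in the theorem.  The paper's cure (for $n=k=1$) is to replace the interpolated bound by the sharper $|e^{ia}-e^{ib}|\le 2(|a-b|\wedge 1)$ and estimate $\|\sigma^{-k}(|r\xi|\wedge1)\|_2$ and $\|\sigma^{-k}(|r^2\tau|\wedge1)\|_2$ directly by splitting the frequency integral at $|\xi|\sim1/r$, $|\tau|\sim1/r^2$ (Lemma~A.6).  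That truncation is precisely what recovers the endpoint, and it generalizes to arbitrary $n,k$ without difficulty.

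For the $L^s$ embeddings your anisotropic HLS/heat-kernel route would work, but the paper's method for Theorem~A.1 is considerably lighter and also generalizes: apply Hausdorff--Young to $\hat u$ (or $\xi\hat u$ for part (ii)), then H\"older against $\|\sigma^k\hat u\|_2$, reducing everything to checking when $|\xi|^\alpha\sigma^{-\beta}\in L^r(\R^{n+1})$.  The latter is an explicit computation using the substitution $\tau=y(1+|\xi|^4)^{1/2}$ (Lemma~A.5), and it yields exactly the ranges $s<\frac{2(n+2)}{(n+2)-4k}$ and $s<\frac{2(n+2)}{(n+4)-4k}$ in parts (i) and (ii).  Your final remark that the $n=k=1$ case collapses to explicit reflections plus a direct Fourier estimate is exactly what the paper does.
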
 
 The results in Theorem B.1 correspond to $n=1$, $k=1$ and the cases $\gamma=1/2$ 
 in part (i) and $s\in [2, 6)$ in part (ii).
 %,  as stated in Theorem \ref{t:ani}.  

\subsection{Fourier estimates in $\R^2$} %Here  we present a self-contained justification of the needed time-space embedding inequalities using the Fourier-transform.
The Fourier transform for $u\in L^1(\mathbb{R}^{2})$ is 
\begin{equation}\label{f2}
\hat u(\xi, l)=\int_{\mathbb{R}^{2}} u(x, t) e^{-2\pi i( x\xi +t l)}dx\,dt,
\end{equation}
which extends to a bounded linear map $ u\to \hat u$ from $L^p$ to $L^{p'}$, for $1\leq p\leq 2$ and $1/p+1/p'=1$. Moreover, the Hausdorff-Young inequality holds:
\begin{equation}\label{hy}
\|\hat u\|_{p'} \leq  \|u\|_p
\end{equation}
for $u\in L^p$.  This  simply interpolates  $\|\hat u \|_\infty \leq \|u\|_1$ and the Plancherel theorem, $\|\hat u\|_2=\|u\|_2$. The continuity of $\hat u$ follows from the dominated convergence theorem. 
In case $\hat u$ is integrable, one may recover $u$ from $\hat u$ by
\begin{equation}\label{inf2}
u(x, t)= \int_{\mathbb{R}^{2}} \hat u(\xi, l) e^{2\pi i( x \xi +tl)}d\xi dl.
\end{equation}
We will deduce Theorem \ref{t:ani} from the corresponding result on all of $\R^2$:
\begin{thm} \label{t:anii} Suppose $u\in W^{2, 1}_2(\mathbb R^{2})$. Then 
 $u\in C^{1/2, 1/4}(\mathbb{R}^2)$.
Moreover, $\Dx u\in L^s(\mathbb R^{2})$ for $2\leq s<6$, with 
$$
 \|\Dx u\|_{L^s(\mathbb{R}^2)} \leq C\|u\|_{W_2^{2, 1}(\mathbb{R}^2)}. 
 $$
 \end{thm}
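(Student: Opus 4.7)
The plan is to bridge $u\in W^{2,1}_2(\mathbb R^2)$ to the claimed embeddings by passing to Fourier variables. By Plancherel the hypothesis is equivalent to $w\hat u\in L^2(\mathbb R^2)$ with comparable norms, where $w(\xi,l):=(1+\xi^4+l^2)^{1/2}$, since
\[
\|w\hat u\|_{L^2}^2=\|\hat u\|_{L^2}^2+\|\xi^2\hat u\|_{L^2}^2+\|l\,\hat u\|_{L^2}^2\simeq \|u\|_{L^2}^2+\|\partial_x^2 u\|_{L^2}^2+\|\partial_t u\|_{L^2}^2.
\]
A preliminary Cauchy--Schwarz shows $\hat u\in L^1$, since $\int w^{-2}\,d\xi\,dl=\pi\int (1+\xi^4)^{-1/2}\,d\xi<\infty$ (evaluate the $l$-integral first). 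Hence $u$ has a bounded continuous representative via Fourier inversion, which serves as the starting point for both claims.

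For the H\"older estimate I fix $P=(x,t)$, $Q=(y,\tau)$ and set $h=|x-y|$, $\tau'=|t-\tau|$, $d=h^2+\tau'$. Writing
\[
u(P)-u(Q)=\int \hat u(\xi,l)\bigl[e^{2\pi i(x\xi+tl)}-e^{2\pi i(y\xi+\tau l)}\bigr]\,d\xi\,dl
\]
and applying Cauchy--Schwarz with weight $w$, matters reduce to showing
\[
I:=\int\frac{\bigl|e^{2\pi i(x\xi+tl)}-e^{2\pi i(y\xi+\tau l)}\bigr|^{2}}{1+\xi^{4}+l^{2}}\,d\xi\,dl\le Cd^{1/2}.
\]
I would combine $|e^{ia}-e^{ib}|^2\le\min(4,|a-b|^2)$ with the elementary bound $\min(4,(A+B)^2)\le 2\min(4,A^2)+2\min(4,B^2)$ for $A=2\pi h|\xi|$, $B=2\pi\tau'|l|$, to split $I=I_x+I_t$. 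In $I_x$, the inner $l$-integral $\int dl/(1+\xi^4+l^2)=\pi(1+\xi^4)^{-1/2}$ leaves a one-dimensional integral that I split at $|\xi|=2/h$, yielding $I_x=O(h)$. In $I_t$, the inner $\xi$-integral $\int d\xi/(a^4+\xi^4)=C a^{-3}$ with $a=(1+l^2)^{1/4}$ leaves a one-dimensional integral that I split at $|l|=2/\tau'$, yielding $I_t=O(\tau'^{1/2})$. Since $h,\tau'^{1/2}\le d^{1/2}$, these combine to give $I\le Cd^{1/2}$, and hence $|u(P)-u(Q)|\le C\|u\|_{W^{2,1}_2}\,d^{1/4}$.

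For the $L^s$-bound on $\partial_x u$, I apply the Hausdorff--Young inequality to $\widehat{\partial_x u}=2\pi i\xi\hat u$ to obtain
\[
\|\partial_x u\|_{L^s(\mathbb R^2)}\le 2\pi C\|\xi\hat u\|_{L^{s'}},\qquad s'=s/(s-1)\in(1,2],\quad 2\le s<\infty.
\]
H\"older's inequality with exponents $(r,2)$ chosen so that $1/s'=1/r+1/2$ then gives $\|\xi\hat u\|_{L^{s'}}\le \|\xi/w\|_{L^r}\|w\hat u\|_{L^2}\le C\|\xi/w\|_{L^r}\|u\|_{W^{2,1}_2}$. Performing the $l$-integration first reduces $\|\xi/w\|_{L^r}^r$ to a constant times $\int|\xi|^r(1+\xi^4)^{-(r-1)/2}\,d\xi$, whose integrand decays like $|\xi|^{2-r}$ at infinity; the integral is therefore finite exactly when $r>3$, which via $r=2s/(s-2)$ is equivalent to $s<6$.

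The main obstacle is the scale-sensitive estimation of $I$: because $h$ and $\tau'$ enter parabolically, with $h^2$ playing the same role as $\tau'$, recovering the sharp exponent $d^{1/2}$ requires splitting each residual one-dimensional integral at the correct anisotropic cutoff ($2/h$ in $\xi$ and $2/\tau'$ in $l$) and then recombining. Once this bookkeeping is done, everything else is a routine Fourier-analytic calculation.
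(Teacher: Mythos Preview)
Your argument is correct and follows essentially the same Fourier-analytic route as the paper: both proofs use the weight $w=m=(1+\xi^4+l^2)^{1/2}$, the $L^1$-bound on $\hat u$ via Cauchy--Schwarz, the $\min$-bound on $|e^{ia}-e^{ib}|$ with a parabolic split into $\xi$- and $l$-parts, and Hausdorff--Young plus H\"older with $1/r=1/2-1/s$ for the $\partial_x u$ estimate. The only cosmetic difference is that the paper works with the single parabolic scale $r=(|x-y|^2+|t-\tau|)^{1/2}$ (bounding $|x-y|\le r$, $|t-\tau|\le r^2$) and isolates the key integral bounds as separate lemmas, whereas you keep $h$ and $\tau'$ separate and carry out the cutoff integrals inline.
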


To proceed, we first recall  a characterization of  $W^{2k, k}_2$.
\begin{lem}(Characterization  of $W^{2k, k}_2(\mathbb{R}^{2})$ by Fourier transform).  Let $k$ be a nonnegative integer, and set
$$
m(\xi,l) = (1+l^2+|\xi|^4)^{1/2}.
$$
Then $u\in W^{2k, k}_2(\mathbb{R}^{2})$ if and only if
$
m^{k} \hat u\in L^2(\mathbb{R}^{2}).
$
In addition, there exists a constant $C$ such that
$$
C^{-1} \|u\|_{W^{2k, k}_2} \leq \|m^{k} \hat u\|_{L^2}  \leq C \|u\|_{W^{2k, k}_2}.
$$
\end{lem}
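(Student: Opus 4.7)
The plan is to pass to the Fourier side via Plancherel and reduce the desired equivalence of norms to a pointwise comparison between the polynomial weight coming from the space-time derivatives and the symbol $m(\xi,l)^{2k}=(1+l^2+|\xi|^4)^k$. Concretely, since distributional differentiation in $x$ or $t$ corresponds under \qref{f2} to multiplication by $2\pi i\xi$ or $2\pi i l$, a function $u\in L^2(\R^2)$ lies in $W^{2k,k}_2(\R^2)$ precisely when $|\xi|^s|l|^r\hat u\in L^2(\R^2)$ for every integer pair $(s,r)$ with $0\le 2r+s\le 2k$, and in that case Plancherel gives
\[
\|D_x^sD_t^r u\|_{L^2}^2 = (2\pi)^{2(s+r)}\int_{\R^2}|\xi|^{2s}|l|^{2r}|\hat u(\xi,l)|^2\,d\xi\,dl.
\]
Summing over all admissible $(s,r)$, I obtain
\[
\|u\|_{W^{2k,k}_2}^2 \;\asymp\; \int_{\R^2} P(\xi,l)\,|\hat u(\xi,l)|^2\,d\xi\,dl,
\qquad P(\xi,l):=\sum_{0\le 2r+s\le 2k}|\xi|^{2s}|l|^{2r},
\]
with constants depending only on $k$.

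The heart of the argument is then to establish the pointwise equivalence $P(\xi,l)\asymp m(\xi,l)^{2k}$. For the upper bound $P\le C_1 m^{2k}$, I will bound each monomial individually: whenever $2r+s\le 2k$, one has $s/2+r\le k$, so
\[
|\xi|^{2s}|l|^{2r} = (|\xi|^4)^{s/2}(l^2)^r \le (1+|\xi|^4+l^2)^{s/2+r} \le m(\xi,l)^{2k},
\]
and summing over the finitely many admissible $(s,r)$ yields $P\le C_1 m^{2k}$. For the reverse bound $m^{2k}\le C_2 P$, I will use the multinomial expansion
\[
m(\xi,l)^{2k} = (1+l^2+|\xi|^4)^k = \sum_{a+b+c=k}\binom{k}{a,b,c}\,l^{2b}|\xi|^{4c}.
\]
Each term $l^{2b}|\xi|^{4c}$ with $a+b+c=k$ corresponds to the choice $(s,r)=(2c,b)$, which satisfies $2r+s=2(b+c)\le 2k$, so every term appearing in the expansion is already one of the summands in $P$. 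Hence $m^{2k}\le C_2 P$ with $C_2$ equal to the largest multinomial coefficient. Combining both directions gives $P\asymp m^{2k}$.

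Putting everything together, I conclude
\[
\|u\|_{W^{2k,k}_2}^2 \;\asymp\; \int_{\R^2} m(\xi,l)^{2k}|\hat u(\xi,l)|^2\,d\xi\,dl = \|m^k\hat u\|_{L^2}^2,
\]
which gives both the biconditional $u\in W^{2k,k}_2\iff m^k\hat u\in L^2$ and the two-sided constant $C$. There is no serious analytic obstacle here; the only care is bookkeeping in the algebraic equivalence $P\asymp m^{2k}$, where one must notice that the multinomial expansion of $m^{2k}$ produces only monomials already present in $P$, while conversely the constraint $s/2+r\le k$ is exactly what allows each summand of $P$ to be dominated by $m^{2k}$.
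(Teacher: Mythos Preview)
Your argument is correct. The reduction to a pointwise weight comparison via Plancherel is the natural route, and your verification of the two-sided estimate $P(\xi,l)\asymp m(\xi,l)^{2k}$ is clean: the bound $|\xi|^{2s}|l|^{2r}=(|\xi|^4)^{s/2}(l^2)^r\le(1+|\xi|^4+l^2)^{s/2+r}\le m^{2k}$ uses only that $s/2+r\le k$ and $m\ge1$, while the reverse direction correctly observes that each multinomial term $l^{2b}|\xi|^{4c}$ with $a+b+c=k$ corresponds to an admissible pair $(s,r)=(2c,b)$ already appearing in $P$.

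There is no paper proof to compare against: the lemma is stated in the appendix as a recalled standard fact (``we first recall a characterization of $W^{2k,k}_2$'') and is used without proof. Your write-up therefore supplies exactly the kind of elementary self-contained justification the appendix promises for the other Sobolev estimates.
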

The following two technical lemmas will be used as well.
\begin{lem} \label{Aab} For $0\leq \alpha < 2\beta $ and $\beta \geq 1$, we have
$$
A_{\alpha, \beta}: = |\xi|^\alpha/m^\beta \in L^s(\mathbb{R}^{2})
\qquad\mbox{
if any only if}\qquad
%$$
s > \max\left\{ \frac{3}{2\beta-\alpha}, \frac{1}{\beta} \right\}.
$$
\end{lem}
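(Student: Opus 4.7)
The plan is a direct evaluation of $\|A_{\alpha,\beta}\|_{L^s(\R^2)}^s$ by iterated integration, first in the $l$-variable with $\xi$ fixed, and then in the $\xi$-variable. Since both integrals are positive, Tonelli applies and the $L^s$ membership is equivalent to the convergence of the iterated integral
\[
\int_{\R} |\xi|^{\alpha s} \left( \int_{\R} \frac{dl}{(1+l^2+|\xi|^4)^{\beta s/2}} \right) d\xi.
\]

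For the inner integral, I substitute $l = \sqrt{1+|\xi|^4}\,\tau$, yielding
\[
\int_{\R} \frac{dl}{(1+l^2+|\xi|^4)^{\beta s/2}}
 = (1+|\xi|^4)^{(1-\beta s)/2} \int_{\R} \frac{d\tau}{(1+\tau^2)^{\beta s/2}}.
\]
The one-dimensional $\tau$-integral is finite if and only if $\beta s > 1$, i.e.\ $s > 1/\beta$; if this fails, the original integral is already infinite. So I obtain the first necessary and sufficient condition $s>1/\beta$, and under it the problem reduces to deciding finiteness of
\[
I(\alpha,\beta,s) := \int_{\R} |\xi|^{\alpha s} (1+|\xi|^4)^{(1-\beta s)/2}\, d\xi.
\]

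Next I analyse $I(\alpha,\beta,s)$ at the two ends. Near $\xi=0$, the integrand behaves like $|\xi|^{\alpha s}$, which is locally integrable in one dimension because $\alpha s \ge 0$; hence no constraint arises there. Near $|\xi|=\infty$, the integrand behaves like $|\xi|^{\alpha s - 2(\beta s -1)} = |\xi|^{\alpha s - 2\beta s + 2}$, and integrability at infinity in one dimension requires $\alpha s - 2\beta s + 2 < -1$, i.e.\ $s(2\beta - \alpha) > 3$. Since $2\beta - \alpha > 0$ by hypothesis, this is the second clean condition $s > 3/(2\beta - \alpha)$. Combining the two thresholds gives exactly
\[
A_{\alpha,\beta} \in L^s(\R^2) \iff s > \max\left\{ \tfrac{3}{2\beta-\alpha},\, \tfrac{1}{\beta} \right\}.
\]

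There is no real obstacle here; the only points to take care of are (a) justifying Tonelli to exchange the roles of the integrals, which is immediate because the integrand is nonnegative, and (b) being careful that the hypotheses $\beta \ge 1$ and $0 \le \alpha < 2\beta$ make both quantities inside the maximum positive and well defined, so that the stated equivalence is not vacuous. The argument is entirely elementary and self-contained, relying only on the one-dimensional integrability criterion $\int (1+r^2)^{-q/2}\,dr<\infty \iff q>1$ and the tail behaviour of a rational function of $|\xi|$.
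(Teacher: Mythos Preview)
Your proof is correct and follows essentially the same route as the paper: the paper makes the identical substitution $l=(1+|\xi|^4)^{1/2}y$ to factor the double integral into a one-dimensional $y$-integral (finite iff $\beta s>1$) times a one-dimensional $\xi$-integral whose tail behaviour gives $s(2\beta-\alpha)>3$. Your presentation as an iterated integral with the inner $l$-integral computed first is just a different way of writing the same computation.
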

\begin{proof} 
A direct calculation using the substitution $l=y(1+|\xi|^4)^{1/2}$
gives
\begin{align*}
\|A_{\alpha, \beta}\|_s^s  &  = \int_{\mathbb{R}^2}  \frac{|\xi|^{\alpha s}}
{ (1+l^2+|\xi|^4)^{\beta s/2}}d\xi\, dl \\
 & = \int_{\mathbb{R}^2}  \frac{|\xi|^{\alpha s} (1+|\xi|^4)^{1/2}}{(1+y^2)^{\beta s/2} (1+|\xi|^4)^{\beta s/2}} d\xi\, dy\\
 &= \int_{\mathbb{R}} \frac{dy}{(1+y^2)^{\beta s/2}} 
 \int_\R \frac{|\xi|^{\alpha s} (1+|\xi|^4)^{1/2}}{(1+|\xi|^4)^{\beta s/2}} 
 d\xi.
\end{align*}
This is bounded if and only if $\beta s>1$ and
$
2\beta s -2-\alpha s >1.
$
That is, $s\beta >1$ and  $s(2 \beta-\alpha)>3$.
\end{proof}
\begin{lem} \label{lem:gr} 
Let $V(x)=|x|\wedge1:=\min\{|x|,1\}$. Then 
for some constant $C>0$,
\begin{align}\label{gr}
\|m^{-1} V(r\xi)\|_2 + \|m^{-1} V(r^2l)\|_2  \leq Cr^{1/2}
\quad\mbox{for all $r>0$.}
\end{align}
\end{lem}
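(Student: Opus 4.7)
The plan is to estimate each of the two terms by computing the integral in the ``free'' variable explicitly and then splitting the remaining integral at the natural threshold where $V$ switches from linear to constant.

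For the first term, I would write
\[
\|m^{-1}V(r\xi)\|_2^2 = \int_\R V(r\xi)^2 \int_\R \frac{dl}{1+l^2+\xi^4}\,d\xi
 = \pi \int_\R \frac{V(r\xi)^2}{\sqrt{1+\xi^4}}\,d\xi,
\]
using the elementary identity $\int_\R (a^2+l^2)^{-1}\,dl = \pi/a$ with $a^2 = 1+\xi^4$. Now split at $|\xi| = 1/r$: on $|\xi|\leq 1/r$ use $V(r\xi)^2 = r^2\xi^2$, and on $|\xi|>1/r$ use $V(r\xi)^2 = 1$. A direct estimate using $\sqrt{1+\xi^4}\geq \max\{1,\xi^2\}$ shows both pieces are $O(r)$ for $r\leq 1$ and $O(1)$ for $r\geq 1$, giving the bound $\|m^{-1}V(r\xi)\|_2 \leq C r^{1/2}$ uniformly in $r$.

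For the second term, I would integrate in $\xi$ first using the scaling identity
\[
\int_\R \frac{d\xi}{a^4+\xi^4} = \frac{C_0}{a^3},\qquad a^4 = 1+l^2,
\]
so that
\[
\|m^{-1}V(r^2 l)\|_2^2 = C_0 \int_\R \frac{V(r^2 l)^2}{(1+l^2)^{3/4}}\,dl.
\]
Again split at $|l| = 1/r^2$. On $|l|\leq 1/r^2$ use $V(r^2l)^2 = r^4 l^2$ and the bound $(1+l^2)^{3/4}\geq \max\{1,|l|^{3/2}\}$; on $|l|>1/r^2$ use $V(r^2 l)^2 = 1$ and the tail bound $\int_{1/r^2}^\infty l^{-3/2}\,dl = 2r$. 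Together these give $O(r)$ for $r\leq 1$ and $O(1)$ for $r\geq 1$, whence $\|m^{-1}V(r^2 l)\|_2 \leq C r^{1/2}$.

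Summing the two bounds yields \eqref{gr}. The main (rather mild) obstacle is bookkeeping the two regimes $r\leq 1$ and $r\geq 1$ together, to see that the combined estimate $Cr^{1/2}$ is genuinely uniform: for small $r$ the splitting point is the essential feature (each piece gives a true $r^{1/2}$-type bound), whereas for large $r$ the inner region shrinks and the estimate comes simply from the integrability of $(1+\xi^4)^{-1/2}$ and $(1+l^2)^{-3/4}$ at infinity.
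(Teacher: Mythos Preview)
Your proof is correct and follows essentially the same approach as the paper: integrate out the ``free'' variable (the paper does this via the substitutions $l=y(1+\xi^4)^{1/2}$ and $\xi=(1+l^2)^{1/4}\eta$, which is equivalent to your explicit integration identities), then split the remaining one-dimensional integral at the threshold $|\xi|=1/r$ or $|l|=1/r^2$. The only cosmetic difference is that the paper obtains a single bound $O(r)$ valid for all $r>0$ without separating the cases $r\le1$ and $r\ge1$, whereas you treat those ranges separately; both routes yield the desired $Cr^{1/2}$.
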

\begin{proof} For the first term,  substituting $l=y(1+|\xi|^4)^{1/2}$ again,
we find
\begin{align*}
\|m^{-1} V(r\xi)\|_2^2  &   =\int_{\mathbb{R}^2}  \frac{(|r \xi|\wedge 1)^2 }{(1+l^2+|\xi|^4)}d\xi \,dl \\
& \leq  \int_{\mathbb{R}^2} \frac{(|r \xi|\wedge 1)^{2} (1+|\xi|^4)^{1/2}}{(1+y^2) (1+|\xi|^4)} d\xi \,dy
\\%;   \quad l= y(1+|\xi|^4)^{1/2} \\
 &=  \int_{\mathbb{R}} \frac{d y }{(1+y^2)} \int_\R \frac{(|r\xi|\wedge 1)^{2} }{(1+|\xi|^4)^{1/2}} d\xi.
\end{align*}
The first factor is finite.  We proceed to decompose the last integral into two parts, one over $\{\xi: \;  |\xi|<r^{-1}\}$ and the other over $\{\xi:  \; |\xi|>r^{-1}\}$: The integrand is even, and
\begin{align*}
\int_0^\infty \frac{(|r\xi|\wedge 1)^{2} }{(1+|\xi|^4)^{1/2}} d\xi
&= \int_0^{r^{-1}} \frac{(r|\xi|)^{2}} {(1+|\xi|^4)^{1/2}} d\xi+ 
\int^\infty_{r^{-1}}  \frac{1} {(1+|\xi|^4)^{1/2}}d\xi\\
& \leq r^{2} \int_0^{r^{-1}} d\xi +  \int^\infty_{r^{-1}} |\xi|^{-2}d\xi=2r. 
\end{align*}
In a similar fashion, we estimate, using $\xi=(1+l^2)^{1/4}\eta$,
 \begin{align*}
\|m^{-1}V(r^2l)\|_2^2  &   =
\int_{\mathbb{R}^2}  \frac{| |r^2 l|\wedge 1|^2 }{1+l^2+|\xi|^4}d\xi\, dl \\
& \leq  \int_{\mathbb{R}^2}  \frac{(|r^2 l|\wedge 1)^{2}}{(1+l^2)^{3/4} (1+|\eta|^4)} d\eta\, dl\\
 &=  \int_\R \frac{d\eta }{(1+|\eta|^4)}  \int_\R \frac{(|r^2 l|\wedge 1)^{2}}{(1+|l|^2)^{3/4}} dl.
\end{align*}
The first integral is bounded; the second integral is further estimated by
\begin{align*}
\int_0^\infty \frac{(|r^2 l|\wedge 1)^{2}}{(1+|l|^2)^{3/4}} dl
&\leq \int_0^{r^{-2}} \frac{(r^2|l|)^{2}} {(1+|l|^2)^{3/4}} dl
+ \int^\infty_{r^{-2}}  \frac{1} {(1+|l|^2)^{3/4}}dl\\
& \leq  r^4 \int_0^{r^{-2}} |l|^{1/2}dl +  \int^\infty_{r^{-2}} |l|^{-3/2}dl\\
& =\left(\frac{2}{3}+2\right)r=\frac{8}{3}r.
\end{align*}
These estimates together yield the bound (\ref{gr}) as claimed.
\end{proof}

\begin{proof}[Proof of Theorem~\ref{t:anii}] From the inversion formula (\ref{inf2}) it follows that 
\begin{align*}
\|u\|_\infty & \leq \| \hat u\|_1 
 \leq \|m \hat u\|_{2} \|m\inv\|_2
 \leq C \|u\|_{W^{2, 1}_2},
\end{align*}
where the bound on $\|m\inv\|_2=\|A_{0, 1}\|_2$ is ensured by Lemma \ref{Aab}. 

(i) Fix  $(x, t)\not=(y, \tau)$ so that $r=\sqrt{|y-x|^2+|\tau -t|}>0$.
Using the inequalities
$$
|e^{2ia}-e^{2ib}| \leq  2|a-b|\wedge  2  = 2V(a-b) ,
$$
\[
|(y-x)\cdot \xi +(\tau-t)l| \le r|\xi|+r^2|l|,
\]
we obtain from the inversion formula and  Lemma \ref{lem:gr} that
\begin{align*}
|u(x, t)-u(y, \tau)| & \leq 2 \pi \int ( V(r\xi)+V(r^2l)) |\hat u(\xi, l)| d\xi\, dl    \\
& \leq 2\pi  ( \|m^{-1}V(r\xi)\|_2 + \|m^{-1}V(r^2l)\|_2) \|m\hat u\|_2\\
& \leq Cr^{1/2} \|u\|_{W^{2, 1}_2}.
\end{align*}
This proves the embedding $W_2^{2, 1}(\mathbb{R}^2) \to  C^{1/2, 1/4}(\mathbb{R}^2)$.

(ii)  For $2\leq s<6$ we have $s'=\frac{s}{s-1}\leq 2$ and $r>3$ where
\[
\frac{1}{r} =\frac{1}{2}- \frac{1}{s}.
\]
We may then use the Hausdorff-Young inequality (\ref{hy}) and Lemma~\ref{Aab} to obtain 
\begin{align*}
\|\Dx u\|_s & \leq  C_s \| \xi\hat u\|_{s'} \leq \|m\hat u\|_{2} \| A_{1, 1} \|_r, 
  \leq C \|u\|_{W^{2, 1}_2}.
\end{align*}
\end{proof}

\begin{proof}[Proof of Theorem \ref{t:ani}]
Let $D$ be the given closed rectangle in $\mathbb{R}^2$. For functions $u$ defined a.e. on $D$, we extend $u$ from $D$ to a larger rectangle $\hat D$
containing $D$ in its interior, in two steps, using linear combinations of
dilated reflections as shown in Adams \cite[Theorem 4.26]{Ad75}.  The extension is to be made so that the weak derivatives are preserved across $\partial D$. 

For instance, we reflect across the faces of $D$ sequentially: First,  from $x$ faces $\{a, b\}$ with $c\leq t\leq d$, writing $\hat a=a-(b-a)$, $\hat b=b+(b-a)$, let
$$
E_xu(x, t)=\left\{ \begin{array}{ll}-3u(2a-x, t)+4u(-x/2+3a/2), & \quad \hat a \leq x \leq a,\\
u(x, t), & \quad a\leq x\leq b, \\
-3u(2b-x, t)+4u(-x/2+3b/2), & \quad b\leq x\leq \hat b.
\end{array}
\right.
$$
and then from the $t$ faces $\{c, d\}$ in an entirely similar manner, such that 
$$
\tilde u = E_{t} E_x u(x, t)
$$
is an $C^1$ extension when crossing $\partial D$ and well-defined in 
$\hat D=[\hat a,\hat b]\times[\hat c,\hat d]$.
Then multiply by a fixed smooth cutoff function $\phi(x, t)$  that is $1$ on $D$ and $0$ near the  boundary of $\hat D$ to obtain 
$$
Eu=\phi(x, t)E_tE_x u(x, t).
$$
In this way, given $u$ such that $\Dt u$ and $\Dx^ju$ are in $L^2(D)$ for $j=0,1,2$, we obtain $Eu$ such that $\Dt Eu$
and $\Dx^jEu$ are in $L^2(\R^2)$ for $j=0,1,2$. The extension $E$ is thus a bounded linear operator from  $\in W^{2, 1}_2(D)$ 
to  $ W^{2, 1}_2(\mathbb{R}^{2})$. Moreover,   
$$
Eu=u \quad {\rm a.e. \quad in}\quad D, 
$$
$Eu$ has support in $\hat D$,  and 
$$
 \|Eu\|_{W^{2, 1}_2(\mathbb{R}^{2})}\leq C\|u\|_{W^{2, 1}_2(D)}.
$$
This combined with Theorem \ref{t:anii} when applied to $Eu$ proves Theorem \ref{t:ani}. 
\end{proof}

\section{Existence for the truncated problem}\label{s:regexist}
In this appendix, we establish the existence of a classical solution
to the truncated problem \qref{ne}.  We aim to prove Proposition~\ref{t:regexist}.
This global existence result does not appear to follow easily from stated results
in standard parabolic theories,  due to the fact that the boundary condition 
$J=0$ at $x=1$ is nonlinear. For the convenience of the reader, we indicate how
to establish Theorem~\ref{t:regexist} by use of  an approximation method that
involves cutting off the nonlinear term in the flux $J$
together with interior regularity theory. 
This will result in a problem
with standard linear Robin-type boundary conditions, that still respects
a maximum principle which keeps the solution uniformly bounded.

\subsection{Approximation by flux cut-off.}
We consider, then, the following problem. 
Let $\chi(x)$ be a smooth, nondecreasing function
with $\chi(x)=0$ for $x<-1$, $\chi(x)=1$ for $x>1$ as in \qref{d:rhochi}.
For small $h>0$ define $\chi_h(x) = \chi( 1+(x-1)/h) $, so that
\begin{equation}\label{d:chih}
\chi_h(x) = \begin{cases}
0\,, &  x<1-2h\,, \cr
1\,, & x=1\,.
\end{cases}\,.
\end{equation}
Writing 
\begin{equation}\label{d:Jh}
J_h = x^2\Dx \neph -2x\neph +  \neph^2 + (3\neph-\neph^2)\chi_h\ ,
\end{equation}
we consider the problem
 \begin{subequations}\label{neh}
\begin{align}
 \Dt \neph  &= \Dx J_h\,,
&\qquad x\in (\epsilon, 1), \ t\in(0,\infty),
\label{e:nep1h}
\\
 \neph &=n\init_h, &\qquad x\in (\epsilon, 1), \ t=0,
\label{e:nep2h}
\\
0&=J_h \,, %\Dx\neph+\neph^2-2\neph,
&x=1,\ t\in[0,\infty),
\label{e:nep3h}
\\
0&= \eps^2\Dx\neph-2\eps\neph,
&x=\eps,\ t\in[0,\infty)\,.
\label{e:nep4h}
\end{align}
\end{subequations}
We construct the initial data $\nin_h$ from the given $\nin_\kappa$ 
so that at $t=0$, the cut-off flux $J_h$ is the original $J_\eps$. Namely,
we require that at $t=0$,
\begin{equation}\label{ic:Jh}
J_h = x^2 \Dx\nin_\kappa -2x\nin_\kappa+ (\nin_\kappa)^2 \ .
\end{equation}
We make $\nin_h(x)=\nin_\kappa(x)$ for $x<1-2h$, and use
\eqref{ic:Jh} to determine $\nin_h(x)$ for $x\in[1-2h,1]$.
These initial data are compatible with the boundary conditions \qref{e:nep3h}--\qref{e:nep4h}.
Clearly in the limit $h\downarrow0$, we have $\nin_h\to\nin_\kappa$
uniformly on $[\eps,1]$. 

\subsection{Uniform bounds on the cut-off problem.}
Because $\chi_h(1)=1$, the boundary condition in \qref{e:nep3h} is linear in $\neph$,
taking the form 
\begin{equation}
\Dx\neph=-\neph \qquad x=1, \ t\in[0,\infty)\ .
\end{equation}
Moreover, note that \qref{e:nep1h} takes the  explicit form
\begin{equation}
\Dt \neph = x^2\Dx^2\neph -2\neph +
(\Dx\neph)(2\neph+(3-2\neph)\chi_h) + (3\neph-\neph^2)\chi_h' \ .
\label{e:nepexpl}
\end{equation}
For this problem, comparison principles hold, whence we obtain 
positivity and uniform sup-norm bounds on solutions.

\begin{lem}\label{l:max-h}
Suppose $\min_{[\eps,1]}\nin_h>0$ and $\max_{[\eps,1]}\nin_h < M_1$ where $M_1\ge 3$.
Suppose $\neph$ is a classical solution of \qref{neh} in $[\eps,1]\times(0,T]$,
with $\neph$ continuous on $[\eps,1]\times[0,T]$. Then $0<\neph(x,t)<M_1$ for all $(x,t)\in[\eps,1]\times[0,T]$. 
\end{lem}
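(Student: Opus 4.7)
My plan is to deduce both bounds from classical maximum-principle arguments applied directly to the quasilinear form \qref{e:nepexpl}. Since $\neph$ is assumed to be a classical solution, the coefficients one obtains by freezing $\neph$ in its own equation are continuous and bounded, which puts us in a setting where Hopf's lemma and the strong maximum principle are available in standard form.

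For the upper bound, I would argue by contradiction, supposing that $\neph$ first attains the value $M_1$ at some point $(x^*, t^*)$ with $t^* \in (0, T]$, and then examine three cases. At an interior $x^* \in (\eps, 1)$, the extremum conditions $\Dx \neph = 0$, $\Dx^2 \neph \le 0$, $\Dt \neph \ge 0$ substituted into \qref{e:nepexpl} yield
\[
  \Dt \neph \le -2 M_1 + M_1 (3 - M_1) \chi_h'(x^*) \le -2 M_1 < 0,
\]
since $M_1 \ge 3$ and $\chi_h' \ge 0$, contradicting $\Dt \neph \ge 0$. At $x^* = \eps$, the right-boundary extremum gives $\Dx \neph(\eps, t^*) \le 0$, whereas \qref{e:nep4h} forces $\eps \, \Dx \neph = 2 M_1 > 0$. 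At $x^* = 1$, left-boundary monotonicity gives $\Dx \neph(1, t^*) \ge 0$, whereas \qref{e:nep3h} together with $\chi_h(1) = 1$ gives $\Dx \neph = -M_1 < 0$. Both boundary cases are immediate contradictions.

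For positivity, I would use the just-obtained upper bound to regard \qref{e:nepexpl} as a linear parabolic PDE
\[
  \Dt \neph - x^2 \Dx^2 \neph - b(x,t) \, \Dx \neph - c(x,t) \, \neph = 0
\]
with bounded continuous coefficients $b = 2\neph + (3 - 2\neph) \chi_h$ and $c = -2 + (3 - \neph) \chi_h'$. The lateral boundary conditions $2 \neph - \eps \Dx \neph = 0$ at $x = \eps$ and $\neph + \Dx \neph = 0$ at $x = 1$ are Robin-type with coefficients of favorable sign for the maximum principle. If $\neph$ first vanishes at an interior point, the strong maximum principle forces $\neph \equiv 0$ at all earlier times, contradicting $\nin_h > 0$; if the first vanishing occurs at a lateral boundary point, Hopf's lemma would require a strictly signed outward normal derivative, while the Robin conditions make this derivative zero when $\neph = 0$. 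Either way we reach a contradiction.

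The main obstacle I anticipate is the quasilinear character of \qref{e:nepexpl}, which a priori makes invocation of linear parabolic tools delicate. The classical regularity of $\neph$ assumed in the statement is precisely what makes reduction to a linear problem legitimate, and the hypothesis $M_1 \ge 3$ is exactly what prevents the term $M_1(3 - M_1) \chi_h'$ from overcoming the dissipative $-2M_1$ in the interior case.
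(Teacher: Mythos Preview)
Your proposal is correct and follows essentially the same line as the paper's own proof: the upper bound is handled by a first-time-of-touching argument, ruling out the boundaries via the Robin conditions \qref{e:nep3h}--\qref{e:nep4h} (using $\chi_h(1)=1$) and ruling out the interior via the sign of $-2M_1 + M_1(3-M_1)\chi_h'$ with $M_1\ge3$. For positivity the paper simply refers to the argument of Lemma~\ref{0m}, which uses Hopf's lemma at the boundary and the explicit substitution $w=e^{3t}\neph$ in the interior; your slightly more abstract invocation of the strong maximum principle for the linearized equation with bounded zero-order coefficient accomplishes the same thing.
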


\begin{proof}
The proof of strict positivity is similar to Lemma 4.2. 
To prove the upper bound, suppose $n_h(X^*)=M$ with $X^*=(x^*,t^*)$, where $t^*>0$ is minimal.
Because $\Dx\neph=-\neph<0$ holds at $x=1$, and \qref{e:nep4h} holds at $x=\eps$,
$x^*$ must lie strictly between $\eps$ and $1$. But because \qref{e:nepexpl} holds
and $\chi_h'\ge 0$, this is impossible.
\end{proof}

We may obtain global existence of a classical solution to problem \qref{neh}
with cut-off flux from the proof of Proposition~7.3.6 of \cite{Lunardi}, 
due to the time-uniform bounds on $\neph$ in this Lemma, and the fact that 
the nonlinear terms in \qref{e:nep1h} appear in the divergence form $N_h(\neph):=\Dx(\neph^2(1-\chi_h))$,
which enjoys a local Lipschitz bound in the $L^\infty$ norm of the form
\begin{equation}
\|N_h(u)-N_h(v)\|_\infty\le 
K\Bigl( \|u-v\|_\infty\|u\|_{C^1} + \|v\|_\infty \|u-v\|_{C^1} \Bigr)\,,
\end{equation}
with $K=1+\|\chi_h'\|_\infty$. 

From the proof of \cite[Prop.~7.3.6]{Lunardi}, this solution $n_h$ is continuous
on  $[\eps,1]\times[0,\infty)=\bar Q^\eps$, and the quantities $\Dx n_h$, $\Dt n_h$ and  $\Dxx n_h$
are continuous on $[\eps,1]\times(0,\infty)$.  However, these 
quantities are actually all continuous on $\bar Q^\eps$ by
the local-time existence theorem~8.5.4 of \cite{Lunardi}, 
due to the fact that the initial data are $C^3$ and satisfy the 
compatibility conditions.  (A simple energy estimate for the 
difference, along the lines of step 1 in subsection \ref{sB:eest} below,
shows that the local solution given by this theorem
agrees with that given by Prop.~7.3.6.) 

Additionally, these quantities are also locally \holder-continuous
on $[\eps,1]\times(0,\infty)$, due to the regularity results 
stated in \cite[Prop.~7.3.3(iii)]{Lunardi}.
From standard interior regularity theory for parabolic problems %
(e.g., based on Theorem~8.12.1 of \cite{Krylov} and bootstrapping),
we infer that $n_h$ is smooth in $Q^\eps$. In particular,
the flux $J_h$ is a classical solution in $Q^\eps$ of the equation
\begin{equation} \label{e:Jht}
\Dt J_h = x^2\Dx^2 J_h + (\Dx J_h)(-2x + 2\neph + (3-2\neph)\chi_h) \,.
\end{equation}
Since %\footnote{Maybe we don't need this??}
 $J_h$ is continuous on $\bar Q^\eps$, by the maximum
principle it is bounded in terms of its initial and boundary values---recall
 $J_h =n_h^2$ at $x=\eps$.
From this and the sup-norm bound in the previous Lemma, we obtain
($\eps$-dependent) uniform bounds on $\Dx n_h$.
\begin{lem} \label{l:nxbdh} There is a constant $M_2$ depending on 
$\nin_\kappa$ and independent of $h$ and $\eps$, such that
$|J_h|+ \eps^2|\Dx n_h| \le M_2$ for all $(x,t)\in \bar Q^\eps$,
\end{lem}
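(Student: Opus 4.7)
The plan is to exploit the evolution equation \eqref{e:Jht} satisfied by the flux $J_h$. This is a \emph{linear} parabolic equation for $J_h$ with no zeroth-order term, and once Lemma~\ref{l:max-h} is in hand the drift coefficient $-2x+2\neph+(3-2\neph)\chi_h$ is bounded on $\bar Q^\eps$ while the diffusion $x^2$ stays between $\eps^2$ and $1$. Since the equation is uniformly parabolic on $[\eps,1]$ and $J_h$ is continuous on $\bar Q^\eps$ (from the regularity discussion just preceding this lemma), the classical parabolic maximum principle applies: on any time slab $[\eps,1]\times[0,T]$, the maximum and minimum of $J_h$ are attained on the parabolic boundary.

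We therefore bound $J_h$ on the parabolic boundary. On $\{x=1\}$ the boundary condition \eqref{e:nep3h} gives $J_h\equiv 0$. On $\{x=\eps\}$, for $h$ small enough that $\eps<1-2h$ we have $\chi_h(\eps)=0$, and combining this with \eqref{e:nep4h} yields $J_h=\neph^2$, which is bounded by $M_1^2$ thanks to Lemma~\ref{l:max-h}. Finally, at $t=0$ the initial data are constructed precisely so that the identity \eqref{ic:Jh} holds, so $J_h(x,0)$ is determined by $\nin_\kappa$ alone and is bounded uniformly in $h$ and $\eps$ by the smoothness of $\nin_\kappa$. Taking the maximum of these three bounds gives $|J_h|\le C_1$ throughout $\bar Q^\eps$ for a constant $C_1$ depending only on $\nin_\kappa$.

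From this, the bound on $\eps^2|\Dx\neph|$ is a matter of solving the definition \eqref{d:Jh} for $x^2\Dx\neph$:
\begin{equation*}
x^2\Dx\neph = J_h + 2x\neph - \neph^2 - (3\neph-\neph^2)\chi_h\,,
\end{equation*}
and observing that the right-hand side is bounded by a constant depending only on $C_1$ and $M_1$, hence only on $\nin_\kappa$. Since $x\ge \eps$ on $\bar Q^\eps$, we conclude $\eps^2|\Dx\neph|\le x^2|\Dx\neph|\le M_2$.

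The main thing to verify carefully is the applicability of the maximum principle to \eqref{e:Jht}; this is essentially routine once one has continuity of $J_h$ on $\bar Q^\eps$ and the $L^\infty$ bound on $\neph$, both of which are already in hand from the preceding discussion. I do not expect any serious obstacle: the delicate steps --- establishing that $J_h$ is continuous up to $t=0$ and to the spatial boundaries, and that $\neph$ is a bona fide classical solution --- have already been handled by the regularity appeal to \cite{Lunardi} made just before the lemma statement.
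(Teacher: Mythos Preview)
Your proposal is correct and follows essentially the same approach as the paper: the paper's argument (contained in the paragraph immediately preceding the lemma) also invokes the maximum principle for the linear parabolic equation \eqref{e:Jht} satisfied by $J_h$, using continuity of $J_h$ on $\bar Q^\eps$ to bound it by its initial and boundary values, and then reads off the bound on $\eps^2|\Dx n_h|$ from the definition of $J_h$ together with the sup-norm bound on $n_h$ from Lemma~\ref{l:max-h}. Your write-up simply makes the parabolic-boundary bookkeeping more explicit.
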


\subsection{Energy estimates.}\label{sB:eest} These are simpler than the corresponding ones
in section 5, because here $\eps>0$ is fixed, and the initial data is smooth. 

1. The basic energy estimate is (using that $n_h$ is positive and bounded)
\begin{align*}
&\frac{d}{dt}  \int_\eps^1 \frac12 n_h^2\,dx  = \int_\eps^1 n_h\,\Dx J_h\,dx = 
 n_hJ_h\Bigr|_\eps^1 - \int_\eps^1 (\Dx n_h)J_h\,dx
\\
&\quad = -n_h(\eps,t)^3 %-\int_\eps^1 (x\Dx n_h)^2\,dx 
-\int_\eps^1
(\Dx n_h)(x^2\Dx\neph -2x\neph +  \neph^2 + (3\neph-\neph^2)\chi_h)\,dx
\\ &\quad \le -\frac{\eps^2}2 \int_\eps^1 (\Dx n_h)^2\,dx + C \int_\eps^1 n_h^2\,dx
\end{align*}
Here $C$ is independent of $h$ and $t$, and after integration 
 we conclude that $\Dx n_h$ (and also $J_h$) is uniformly bounded
independent of $h$ in $L^2$ on $[\eps,1]\times[0,T]$, for any $T$.

2. For $(x,t)\in Q^\eps$, the flux $J_h$ satisfies \qref{e:Jht}, and we find
\begin{align*}
\frac{d}{dt}  \int_\eps^1 \frac12 J_h^2\,dx 
&= J_h(x^2\Dx J_h)\Bigr|_\eps^1 - \int_\eps^1 (x\Dx J_h)^2\,dx
\\ & \qquad
+ \int_\eps^1 J_h(\Dx J_h)(-4x + 2\neph + (3-2\neph)\chi_h)
\\\quad &\le -\frac{\eps^2}3 \D_t(n_h(\eps,t)^3) 
- \frac{\eps^2}2 \int_\eps^1 (\Dx J_h)^2\,dx
+ C\int_\eps^1 J_h^2\,dx \,.
\end{align*}
Upon integration in time, we conclude $\Dx J_h = \Dt n_h$
is uniformly bounded
independent of $h$ in $L^2$ on $[\eps,1]\times[0,T]$, for any given $T$.
And further, using \qref{d:Jh} for $x<1-2h$, we deduce that 
$\Dx^2\neph$ is uniformly bounded independent of $h$ in $L^2$
on any compact set
\begin{equation}\label{c:set1}
[\eps,1-\hat\eps]\times[0,T] \subset [\eps,1)\times[0,\infty)
\end{equation}
fixed independent of $h$. 
(This does not work for $\hat\eps=0$ because $\chi_h'$ is not uniformly bounded.)

3. Next, we have
\begin{align*}
&\frac{d}{dt}   \int_\eps^1 \frac12 (\Dx J_h)^2\,dx = 
(\Dx J_h)(\Dt J_h)\Bigr|_\eps^1 
- \int_\eps^1 (\Dx^2J_h)(\Dt J_h)\,dx
\\ &\qquad \le -2n_h(\eps,t)(\Dt n_h(\eps,t))^2
- \frac{\eps^2}2 \int_\eps^1 (\Dx^2 J_h)^2 \,dx + C \int_\eps^1 (\Dx J_h)^2\,dx\,.
\end{align*}
Because $\Dx J_h$ is continuous on $\bar Q^\eps$, we may integrate 
this inequality over $t\in[0,T]$,
and use the bound on $\Dx J_h$ from the previous step,
to conclude that $\Dx^2 J_h$ and $\Dt J_h$ are uniformly bounded
independent of $h$ in $L^2$ on $[\eps,1]\times[0,T]$.
By the anisotropic Sobolev estimates in Appendix A, we deduce
that $\Dx J_h$ is uniformly bounded independent of $h$ in $L^4$ 
on $[\eps,1]\times[0,T]$, as well.

4. 
Lastly we derive an interior estimate on $\Dx^3 J_h$. We define
\[
\beta(x,t) = -2x + 2\neph + (3-2\neph)\chi_h \,, \quad\mbox{so}\quad 
\Dt\beta = 2(1-\chi_h)\Dx J_h\,.
\]
Then
\begin{equation}
\D_t^2 J_h = x^2\Dx^2\Dt J_h + \beta \Dx\Dt J_h + 2(1-\chi_h)(\Dx J_h)^2,
\end{equation}
and we let $\eta(x)=x-\eps$ so that $\eta(\eps)=0$ and $\eta'=1$, 
\begin{align*}
&\frac{d}{dt}  \int_\eps^1 \frac12 \eta^2(\Dt J_h)^2\,dx 
= \int_\eps^1 \eta^2 (\Dt J_h)(\Dt^2 J_h)\,dx
\\ &\quad = 
\int_\eps^1 \eta^2(\Dt J_h)( \beta \Dx\Dt J_h + 2(1-\chi_h)(\Dx J_h)^2)\,dx
\\ &\qquad\quad
-\int_\eps^1 \eta^2x^2(\Dx\Dt J_h)^2\,dx 
-\int_\eps^1 (\Dt J_h)(\Dx\Dt J_h)\Dx(\eta^2x^2)\,dx
\\& \quad \le -\frac{\eps^2}2 \int_\eps^1 \eta^2(\Dx\Dt J_h)^2\,dx
+ C \int_\eps^1 (\Dt J_h)^2 + (\Dx J_h)^4 \,dx
\end{align*}
Because only know $\Dt J_h$ is continuous for $t>0$, 
we integrate this over $t\in[s,T]$, then over $s\in[0,\tau]$, and use
the bounds from the previous step. We infer that 
$\eta\Dx\Dt J_h$ is uniformly bounded independent of $h$ in $L^2$ on
$[\eps,1]\times[\tau,T]$. Due to \qref{e:Jht} and \qref{d:chih},
we infer that $\Dt(\Dx J_h)$ and $\Dx^2(\Dx J_h)$ are uniformly bounded independent of $h$
in $[\eps+\hat\eps,1-\hat\eps]\times[\tau,T]$,
for any small fixed $\hat\eps>0$ and compact $[\tau,T]\subset(0,\infty)$.

\subsection{Compactness argument.} 
By the anisotropic Sobolev estimates in Appendix A, 
from the bounds on $\Dt J_h$ and $\Dx^2 J_h$ in
step 3 above, we have that $J_h$ is uniformly \holder-continuous
(independent of $h$) on any compact set 
\begin{equation}\label{c:set2}
[\eps,1]\times[0,T] \subset [\eps,1]\times[0,\infty)=\bar Q^\eps\,.
\end{equation}
Also, by the bounds on $\Dt n_h$ and $\Dx^2 n_h$ in
step 2, $n_h$ is uniformly \holder-continuous
on any compact set of the form in \qref{c:set1}.
From this we infer by \qref{d:Jh} for $x<1-2h$ 
the same for $\Dx n_h$.
By step 4, $\Dt n_h=\Dx J_h$ and $\Dx^2n_h$ are uniformly \holder-continuous
on any compact set
\begin{equation}\label{c:set4}
[\eps+\hat\eps,1-\hat\eps]\times[\tau,T]\subset (\eps,1)\times(0,\infty)\,.
\end{equation}

From the Arzela-Ascoli theorem and a diagonalization argument, 
along a subsequence of $h\to0$ we get uniform convergence of: 
$J_h$ to $J_\eps$ in sets of form \qref{c:set2}, 
$n_h$ and $\Dx n_h$ to respective limits $n_\eps$ and $\Dx n_\eps$ 
in sets of form \qref{c:set1},
and $\Dt n_h$ to $\Dt n_\eps$ and $\Dx^2n_h$ to $\Dx^2 n_\eps$
in sets of form \qref{c:set4}, with all limits \holder-continuous 
on the indicated sets.

In the limit, the PDE $\Dt n_\eps=\Dx J_\eps$ holds for $(x,t)\in Q^\eps$, and 
\begin{equation} \label{e:Je2}
J_\eps = x^2 \Dx n_\eps -2xn_\eps + n_\eps^2 \,, \quad (x,t)\in[\eps,1)\times[0,\infty)\ .
\end{equation}
Because of the continuity of $J_\eps$  on the sets in \qref{c:set2}
and $n_\eps$ on the sets in \qref{c:set1}, by regarding \qref{e:Je2}
as an ODE for $n_\eps$ we deduce that $n_\eps$ and 
$\Dx n_\eps$ are continuous on the sets in \qref{c:set2} also
(i.e., up to the boundary $x=1$), 
and both boundary conditions in \qref{e:nep3}--\qref{e:nep4} hold. 

From standard parabolic theory as before, we find that
$n_\eps$ is smooth in $Q^\eps$. 
This concludes the proof of Proposition~\ref{t:regexist}.

\section{Regularity away from the origin}\label{s:interior}

What we seek to do in this section is to prove Theorem~\ref{t:regeg}, providing
sufficient local regularity in the domain  $Q=(0,1]\times(0,\infty)$  %=(0,1]\times(0,\infty)$
to infer that the solutions $n$ in Theorem~\ref{t.exist} are classical,
with at least the regularity needed for the strict contraction estimate
in Lemma~\ref{contraction}.
For higher  regularity in the interior of $Q$ we will rely on standard parabolic theory
via bootstrap arguments.
% [??based on Krylov, Theorem 8.12.1.].

The idea is to obtain uniform local bounds on $L^2$ norms
of the solutions $\neph$ of the flux-cutoff problem, the associated
fluxes $J_h$ in \qref{d:Jh}, and certain space-time derivatives $\D^\alpha \neph$, $\D^\beta J_h$.
These bounds will be independent of $h$, $\eps$ and the smoothing parameter $\kappa$. 
The local $L^2$ bounds on these derivatives are inherited by 
$\D^\alpha \nep$, $\D^\beta J_\eps$ in the limit $h\to0$,
then by $\D^\alpha n_\kappa$, $\D^\beta J_\kappa$ 
after taking $\eps\to0$, and then by $\D^\alpha n$, $\D^\beta J$ after taking $\kappa\to0$.
Local \holder-norm bounds for each quantity $v\in\{n,\Dx n,\Dt n,\Dxx n\}$ 
in $Q$ will follow from the local $L^2$ bounds on $\Dt v$ and $\Dxx v$, due to Theorem \ref{t:ani}.  

In order to achieve this, we proceed to first obtain the needed estimates for $n_h$ and $J_h$, 
independent of  $h$, $\eps$, and $\kappa$, and then pass to the limits. 
Select a smooth function $\bar \eta\colon\R\to[0,\infty)$, convex and nondecreasing with 
$0=\bar \eta(0)<\bar \eta(x)\le x $  for $x>0$. 
Weighted energy estimates with weight $\eta(x)=\bar\eta(x-ma)$ will yield 
uniform estimates in $L^2(W_m)$, where the sets $W_m\subset [\epsilon, 1]\times [s, T]$ 
have the form
\begin{equation} \label{cpt1}
W_m =[(m+1)a,1]\times[ms,T] \,,\qquad m=1,2,\ldots,
\end{equation}
for $a,s>0$ arbitrary but fixed independent of  $h$, $\eps$, and $\kappa$.

0. As a preliminary step,  we seek a uniform pointwise bound on $n_h$ 
independent of $h$, $\epsilon$ and $\kappa$, in domains of the form
\begin{equation}\label{dom:etinf}
[\epsilon, 1]\times [\tau, \infty), \qquad \tau>0. 
\end{equation}
From the form of $J_h$ and $J_\epsilon$ it follows that 
\begin{align*}
& n_h(x, t)-n_\eps(x, t)  =n_h(1/2, t)-n_\eps(1/2, t) +\int_{1/2}^x \frac{1}{y^2}(J_h-J_\eps)\,dy \\
& \quad + \int_{1/2}^x \left[ \frac{2}{y}(n_h-n_\eps) -\frac{1}{y^2}(n_h^2-n_\eps^2)\right]dy  
 \quad + \int_{1-2h}^x  \frac{1}{y^2}(n_h^2-3n_h)\chi_h\,dy.
\end{align*}
Using the uniform convergence of $n_h$ to $n_\epsilon$ in $[\epsilon, 1-\hat \epsilon]\times [0, T]$
(proven previously), and of $J_h$ to $J_\epsilon$ in $[\epsilon, 1]\times [0, T]$, 
as well as the bounds on $n_h$ in Lemma \ref{l:max-h} 
and on $n_\epsilon \leq S(x, t)$ in Lemma  \ref{l:uss},  
we obtain the uniform convergence of $n_h$ toward $n_\eps$ as $h\to 0$.   
Therefore, we get the following uniform pointwise bound independent of $h$, $\eps$, and  $\kappa$:
For any $\tau>0$, for sufficiently small $h>0$ we have
\begin{equation} \label{bf}
0 < n_h(x,t) \le M_\tau=\max_{x\in [0, 1]} S(x, \tau) +1,  \qquad (x, t)\in [\eps,1]\times[\tau, \infty).
    %\quad \forall \tau>0,  
\end{equation}
(Here and below, the required smallness of $h$ depends on $\kappa$, 
because the bound in Lemma \ref{l:uss} depends on $\kappa$. But we will not mention this further.)  

1. Next we proceed to obtain bounds using weighted energy estimates.
The weighted energy estimate with $\eta(x)=\bar \eta(x-a)$  is 
\begin{align*}
&\frac{d}{dt}  \int_\eps^1 \frac12 \eta^2  n_h^2\,dx  
= \int_\eps^1 \eta^2 n_h\,\Dx J_h\,dx 
=  - \int_\eps^1 (\eta^2 \Dx n_h+2\eta \eta' n_h )J_h\,dx
\\
&\quad =  
-\int_a^1
(\eta^2 \Dx n_h+2\eta \eta' n_h )(x^2\Dx\neph -2x\neph +  \neph^2 + (3\neph-\neph^2)\chi_h)\,dx
\\ &\quad \le -\frac{1}2 
\int_a^1 (x\eta\, \Dx n_h)^2\,dx + C \int_a^1 (n_h+n_h^2)^2\,dx. 
\end{align*}
By integration over $t\in [s, T]$ and using (\ref{bf}),  we infer that
\begin{align} \notag 
    %\int_\eps^1\eta^2 n_h^2(x, T)dx +
  &  \int_s^T\int_{2a}^1 (x\eta\,\Dx n_h)^2dx\,d t \\ %\notag 
& \qquad \leq 
 \int_\eps^1\eta^2 n_h^2 (x, s)\,dx +C \int_s^T \int_a^1(n_h+n_h^2)^2 \,dx\,dt 
 \notag \\  
&\qquad \leq C_s\,,
  \label{c3} 
  \end{align}
where $C_s$ may depend on $s$ (and $T$, but we suppress this dependence),
but is independent of $h$, $\epsilon$, $\kappa$.  
Because $\eta(2a)>0$, we conclude that $\Dx n_h$,
hence also $J_h$, 
is uniformly bounded independent of $h, \eps$ and $\kappa$ in $L^2(W_1)$
(with a bound that depends on $a$ and $s$).

2.  The cut-off flux $J_h$  satisfies
\begin{equation}\label{e:Jt}
\Dt J_h=x^2 \Dx^2 J_h +\beta \Dx J_h \,,  
\end{equation}
with boundary condition $J_h(1,t)=0$ for $t>0$, where 
\[
\beta(x,t) = -2x + 2\neph + (3-2\neph)\chi_h \,.
\]
Multiply by $\eta^2 J_h$ with $\eta(x)=\bar \eta(x-2a)$, integrate by parts, and use the
inequality $uv\le \frac14 u^2+v^2$ to obtain
\begin{eqnarray*}
&& \frac{d}{dt}\int_\eps^1 \frac12\eta^2 J_h^2 \,dx 
\\&& \qquad
=
-\int_\eps^1 (x\eta\,\Dx J_h)^2\,dx  +
\int_\eps^1 
J_h (\Dx J_h)(\eta^2\beta - \Dx(x^2\eta^2))\,dx
\\
&& \qquad \le 
-\int_\eps^1 (x\eta\,\Dx J_h)^2\,dx  +
\int_\eps^1 |J_h\Dx J_h|\cdot 2x\eta C_s\,dx 
\\
&& \qquad \le 
-\frac12 \int_\eps^1 (x\eta\,\Dx J_h)^2\,dx 
+ C_s \int_{2a}^1 
 J_h^2 \,dx \,.
\end{eqnarray*}
Integrating over $t\in[\tau,T]$ first, then averaging over $\tau \in [s, 2s]$, we obtain 
\begin{align}\notag 
&\int_{2s}^T\int_{3a}^1 (x\eta\, \Dx J_h)^2dx\,dt  \\ %\notag 
& \qquad \leq 
 \frac{1}{s}\int_s^{2s} \int_{2a}^1\eta^2 J_h^2(x, \tau)dx\,d\tau  + C_s \int_{s}^T  \int_{2a}^1 
 J_h^2 \,dx\,dt 
 \notag\\
 &\qquad \leq C(a, s). 
 \label{es2}
\end{align}
Here we have used $|J_h|^2\leq C(x^2|\partial_x n_h|^2 +n_h^2 +n_h^4)$, (\ref{c3}) 
and the upper bound on $n_h$ in (\ref{bf}).  We conclude that
$\Dx J_h$ is uniformly bounded in $L^2(W_2)$, independent of $h$, $\eps$, $\kappa$. 
Thus $\Dt n_h$ (but not $\Dx^2 n_h$) % [[and $\Dx^2 n_h$\footnote{Whoa! $\Dx\chi_h$ comes in here.} ]] are 
is uniformly bounded in the same $L^2$ sense. %in $\llocq$.

3. Let us write $n_1 = \Dt n_h=\Dx J_h$.
Then for $t>0$,  
\begin{equation}\label{c7}
\Dt n_1 = \Dx J_1, \qquad J_1(1,t)=0,
\end{equation}
where
\begin{equation}\label{d:J1+}
J_1=\Dt J_h = x^2\Dx n_1 + \beta n_1. 
\end{equation}
Note that the validity of the zero-flux condition $J_1(1, t)=0$ is implied by the H\"{o}lder continuity of $J_1$. 
To see this is valid, set $v=J_h-n_h^2(1-\chi_h)$. From (\ref{e:Jht}) it follows that $v$ solves 
$$
\partial_t v- x^2 \partial_x^2 v =F,
$$ 
subject to homogeneous boundary conditions, where the source term   
$$
F=\partial_x J_h (-2x +3\chi_h)+x^2\partial_x^2(n_h^2(1-\chi_h)).
$$
From the results in Appendix~\ref{s:regexist},
$F$ is locally H\"{o}lder-continuous on $[\eps, 1]\times (0, \infty)$.  
Hence, $J_1= \partial_t v+2n_h\partial_t n_h(1-\chi_h)$ is continuous up to $x=1$. 
 
Multiply (\ref{c7}) by $\eta^2 n_1$ with $\eta(x)=\bar \eta(x-3a)$, and integrate in $x$ over $[\epsilon, 1]$ to obtain
\begin{align*}
& \frac{d}{dt}\int_\eps^1 \frac12\eta^2 n_1^2 \,dx 
= - \int_\eps^1 (\eta^2\Dx n_1+2\eta\eta' n_1) (x^2\Dx n_1+ \beta n_1)\,dx
\\
&\qquad \le
- \int_\eps^1 (x\eta\,\Dx n_1)^2 \,dx
+ \int_\eps^1 \Bigl(
(2x^2\eta \eta'+|\beta|\eta^2 )|n_1\Dx n_1| + 
2\eta\eta' |\beta| n_1^2 \Bigr)\,dx
\\
&\qquad \le
-\frac12 \int_\eps^1 (x\eta\,\Dx n_1)^2 \,dx
+ C_s \int_{3a}^1
  n_1^2 \,dx \,.
\end{align*}
Integrating over $t\in[\tau,T]$ first, then over $\tau \in [2s, 3s]$, we obtain
\begin{align}\label{es3} \notag 
&\int_{3s}^T\int_{4a}^1 (x\eta \, \Dx n_1 )^2dx\,d\tau %\\ \notag 
\\&\qquad
 \leq  \frac{1}{s}\int_{2s}^{3s} \int_{3a}^1\eta^2 n_1^2(x, \tau)\,dx\,d\tau 
+ C_s\int_{2s}^T \int_{3a}^1 n_1^2 \,dx\,d\tau 
\notag\\
&\qquad \leq C(a, s),
\end{align}
where the bound on $n_1=\Dx J_h$ in (\ref{es2}) from step 2 has been used.  
We conclude that $\Dx^2 J_h$ and $\Dt J_h$ (by \eqref{e:Jt}) are uniformly bounded
independent of $h, \eps$, and $\kappa$ in $L^2$ on $W_3$,  
hence $J_h$ is uniformly H\"older continuous on $W_3$.

4. Next we compute $\Dt J_1$ to complete the estimates for classical solutions. 
Differentiating \qref{d:J1+}  with respect to $t$ we find that for $t>0$,
\begin{equation}\label{e:J1t}
\Dt J_1 = x^2\Dx^2 J_1 + \beta \Dx J_1 + \Dt \beta\, n_1, \quad J_1(1,t) = 0.
\end{equation}
Recall that $|\beta|\leq C_s$ and note 
$\Dt\beta = 2(1-\chi_h)\Dx J_h,$
hence $|\partial_t \beta|\leq 2|n_1|$.  
Multiply by $\eta^2 J_1$ with $\eta(x)=\bar \eta(x-4a)$,  and integrate by parts to find
\begin{align*}
& \frac{d}{dt}\int_\eps^1 \frac12\eta^2 J_1^2 \,dx 
+\int_\eps^1 (x\eta\,\Dx J_1)^2\,dx  
\\
&\qquad =
\int_\eps^1 
(\beta \eta^2 - \Dx(x^2\eta^2))J_1(\Dx J_1) +\eta^2  J_1\Dt \beta\,n_1\,dx
\\
&\qquad  \le \frac12\int_\eps^1 (x\eta\,\Dx J_1)^2\,dx + 
C_s \left(\int_{4a}^1 J_1^2\,dx + \int_{4a}^1 n_1^4\,dx\,\right).
\end{align*}
Integrating over $t\in[\tau,T]$ first, then averaging over $\tau \in [3s, 4s]$, we obtain
\begin{align*} \notag 
\int_{4s}^T\int_{5a}^1 (x\eta\,  \Dx J_1 )^2dx\,dt 
&\leq 
 \frac{1}{s}\int_{3s}^{4s} \int_{4a}^1 \eta^2 J_1^2(x, \tau) \,dx\,d\tau \\\notag 
 & \quad +  C_s \int_{3s}^T 
 \left(  \int_{4a}^1 J_1^2\,dx + \int_{4a}^1 \ n_1^4\,dx\right)dt \\ \notag  
 & \leq C_s\int_{3s}^T \int_{4a}^1 (|\partial_x n_1|^2+|\partial_x J_h|^2+|n_1|^4)\,dx\,dt. 
\end{align*}
The first two terms are bounded using the bounds from the previous steps, (\ref{es2}) and (\ref{es3}).  
Note also that $n_1=\Dx J_h$ is in $L^4(W_3)$ due to an anisotropic embedding theorem. Hence
 \begin{align} \label{es4}
\int_{4s}^T\int_{5a}^1 (x\eta\,  \Dx J_1 )^2dx\,d\tau \leq C(a, s). 
\end{align}
We can conclude that $\Dx J_1$ ($=\Dx\Dt J_h=\Dt\Dx J_h=\Dt^2n_h$) is bounded in $L^2(W_4)$
independent of $h, \eps$, and $\kappa$.

5. After taking the limit $h\to0$ along a suitable subsequence, 
we conclude from steps 1 and 2 that $\Dt\nep=\Dx J_\eps$ is uniformly bounded in $L^2(W_2)$,
hence the same is true of $\Dxx \nep$ due to the form of $J_\eps$ in \eqref{d.Jep}.
By Theorem~\ref{t:ani}, $\nep$ is uniformly \holder-continuous on $W_2$, 
independent of $\eps$ and $\kappa$.

Next we conclude from step 3 that $J_\eps$ is uniformly \holder-continuous on $W_3$,
and the same is true of $\Dx \nep$ by \eqref{d.Jep}. 

From step 4 we then conclude $\Dt\Dx J_\eps$ is uniformly bounded in $L^2(W_4)$
and by differentiating \eqref{d.Jep}  we conclude the same for $\Dx^3 J_\eps$. 
Therefore $\Dx J_\eps=\Dt \nep$ is uniformly \holder-continuous on $W_4$,
and the same holds for $\Dx^2\nep$. 

After taking the limits $\epsilon \to 0$, and finally $\kappa \to 0$, 
these estimates ensure that the weak solution $n$ of Theorem~\ref{t.exist}
is a classical solution in $Q=(0,1]\times(0,\infty)$, with the local \holder\ regularity
indicated in Theorem~\ref{t:regeg}.

%\pagebreak
\section*{Acknowledgments.}  We want to thank the IPAM for the hospitality and support during our visit in May-June 2009, when this work was initiated.
This material is based upon work supported by the National
Science Foundation under 
the NSF Research Network Grant no.\ RNMS11-07444, \ RNMS11-07291(KI-Net),
and grants   DMS 0907963 and DMS 1312636 (HL),
DMS 0905723, DMS 1211161 and DMS 1515400 (RLP).
RLP was partially supported by the Center for Nonlinear Analysis (CNA)
under National Science Foundation PIRE Grant no.\ OISE-0967140.

%\vfil\pagebreak
%\bibliographystyle{amsplain}

\bibliographystyle{siam}
\bibliography{photon}

\end{document}